\newtheorem{theorem}{Theorem}[section]
\newtheorem{corollary}[theorem]{Corollary}
\newtheorem{definition}[theorem]{Definition}
\newtheorem{lemma}[theorem]{Lemma}
\newtheorem{proposition}[theorem]{Proposition}
\newtheorem{remark}[theorem]{Remark}
\newtheorem{clm}[theorem]{Claim}
\newtheorem{problem}[theorem]{Problem}
\begin{document}

\title{Non-empty pairwise cross-intersecting families\thanks{This work is supported by  NSFC (Grant No. 11931002 and 12371327).  E-mail addresses: yangmiemie@hnu.cn (Yang Huang),
ypeng1@hnu.edu.cn (Yuejian Peng, corresponding author).}}

\author{Yang Huang, Yuejian Peng$^{\dag}$ \\[2ex]
{\small School of Mathematics, Hunan University} \\
{\small Changsha, Hunan, 410082, P.R. China }  }

\maketitle

\vspace{-0.5cm}

\begin{abstract}
Two families $\mathcal{A}$ and $\mathcal{B}$ are cross-intersecting if $A\cap B\ne \emptyset$ for any $A\in \mathcal{A}$ and $B\in \mathcal{B}$. We call $t$ families  $\mathcal{A}_1, \mathcal{A}_2,\dots, \mathcal{A}_t$ pairwise cross-intersecting families if $\mathcal{A}_i$ and $\mathcal{A}_j$ are cross-intersecting when $1\le i<j \le t$. Additionally, if $\mathcal{A}_j\ne \emptyset$ for each $j\in [t]$, then we say that $\mathcal{A}_1, \mathcal{A}_2,\dots, \mathcal{A}_t$ are non-empty pairwise cross-intersecting. Let $\mathcal{A}_1\subset{[n]\choose k_1}, \mathcal{A}_2\subset{[n]\choose k_2}, \dots, \mathcal{A}_t\subset{[n]\choose k_t}$ be non-empty pairwise cross-intersecting families with $t\geq 2$, $k_1\geq k_2\geq \cdots \geq k_t$, $n\ge k_1+k_2$ and  $d_1, d_2, \dots, d_t$ be positive numbers. In this paper, we give a sharp upper bound of
$\sum_{j=1}^td_j|\mathcal{A}_j|$ and characterize  the families $\mathcal{A}_1, \mathcal{A}_2,\dots, \mathcal{A}_t$ attaining the upper bound. Our results unifies results of Frankl and Tokushige [J. Combin. Theory Ser. A 61 (1992)], Shi, Frankl and Qian [Combinatorica 42 (2022)], Huang and Peng \cite{huangpeng},  and Zhang-Feng \cite{ZF2023}.
Furthermore, our result can be  applied in the treatment for some $n<k_1+k_2$ while all previous known results do not have such an application. In the proof, a result of Kruskal-Katona  is applied to allow us to consider only families $\mathcal{A}_i$ whose elements are the first $|\mathcal{A}_i|$ elements in lexicographic order. We  bound $\sum_{i=1}^t{|\mathcal{A}_i|}$ by a single variable function $g(R)$, where $R$ is the last element of $\mathcal{A}_1$ in lexicographic order. One crucial and challenge part is  to verify that $-g(R)$ has unimodality. We think that the unimodality of functions in this paper are interesting in their own, in addition to the extremal result.
\end{abstract}

{{\bf Key words:}
Cross-Intersecting families; Extremal finite sets}

{{\bf 2010 Mathematics Subject Classification.}  05D05, 05C65, 05D15.}

\section{Introduction}
Let $[n]=\{1, 2, \dots, n\}$.  For $0\leq k \leq n$, let ${[n]\choose k}$ denote the family of all $k$-subsets of $[n]$. A family $\mathcal{A}$ is $k$-uniform if $\mathcal{A}\subset {[n]\choose k}$. A family $\mathcal{A}$ is  intersecting if $A\cap B\ne \emptyset$ for any $A$ and $B\in \mathcal{A}$. Many researches in extremal set theory are inspired by the foundational  result of Erd\H{o}s--Ko--Rado \cite{EKR1961} showing that a maximum $k$-uniform intersecting family is a full star.  This theorem of Erd\H{o}s--Ko--Rado has many interesting generalizations. Two families $\mathcal{A}$ and $\mathcal{B}$ are cross-intersecting if $A\cap B\ne \emptyset$ for any $A\in \mathcal{A}$ and $B\in \mathcal{B}$.
We call $t$ $(t\geq 2)$ families  $\mathcal{A}_1, \mathcal{A}_2,\dots, \mathcal{A}_t$ pairwise cross-intersecting families if $\mathcal{A}_i$ and $\mathcal{A}_j$ are cross-intersecting when $1\le i<j \le t$. Additionally, if $\mathcal{A}_j\ne \emptyset$ for each $j\in [t]$, then we say that $\mathcal{A}_1, \mathcal{A}_2,\dots, \mathcal{A}_t$ are non-empty pairwise cross-intersecting.
The following result was proved by Hilton.
\begin{theorem}[Hilton, \cite{H}]
Let $n, k$ and $t$ be positive integers with $n\geq 2k$ and $t\geq 2$. If $\mathcal{A}_1, \mathcal{A}_2,\dots, \mathcal{A}_t\subset {[n]\choose k}$ are pairwise cross-intersecting, then
\begin{align*}
\sum_{i=1}^t|\mathcal{A}_i| \leq
\begin{cases} {n \choose k}, & \text{if $t\le \frac{n}{k}$}; \\
t{n-1 \choose k-1}, & \text{if $t\ge \frac{n}{k}$},
\end{cases}
\end{align*}
and the  bound is tight. If $|\mathcal{A}_1|\geq |\mathcal{A}_2|\geq \cdots \geq |\mathcal{A}_t|$, $n\ne 2k$ when $t=2$, and the equality holds, then either
$\mathcal{A}_1={[n]\choose k}$, $\mathcal{A}_2=\cdots=\mathcal{A}_t=\emptyset$ and $t\le \frac{n}{k}$, or  $\mathcal{A}_1=\mathcal{A}_2=\cdots=\mathcal{A}_t=\{F\in {[n]\choose k}: x\in F, \ {\rm where } \ x\in [n] \}$ and $t\ge \frac{n}{k}$.
\end{theorem}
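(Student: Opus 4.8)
The plan is to reduce, by a single application of the Kruskal--Katona theorem, to an upper bound depending only on $|\mathcal{A}_1|$, then to settle the remaining one-variable inequality by a cascade/convexity analysis, and finally to read the extremal families off from the equality cases.

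After relabelling we may assume $a_1 := |\mathcal{A}_1| \geq a_2 := |\mathcal{A}_2| \geq \cdots \geq a_t := |\mathcal{A}_t|$. For $\mathcal{A} \subseteq \binom{[n]}{k}$ put $\mathcal{I}(\mathcal{A}) = \{B \in \binom{[n]}{k} : B \cap A \neq \emptyset \text{ for every } A \in \mathcal{A}\}$, and let $\mathcal{L}(m)$ denote the first $m$ sets of $\binom{[n]}{k}$ in lexicographic order. Since $\mathcal{A}_1$ and $\mathcal{A}_i$ are cross-intersecting for each $i \geq 2$, we have $\mathcal{A}_i \subseteq \mathcal{I}(\mathcal{A}_1)$; combining $a_i \leq |\mathcal{I}(\mathcal{A}_1)|$ with $a_i \leq a_1$ gives
\[
\sum_{i=1}^{t} a_i \ \leq \ a_1 + (t-1)\min\big\{a_1,\ |\mathcal{I}(\mathcal{A}_1)|\big\}.
\]
The complement of $\mathcal{I}(\mathcal{A})$ in $\binom{[n]}{k}$, carried over by $B \mapsto [n]\setminus B$, is exactly the upper shadow of $\mathcal{A}$ at level $n-k$; so the upper-shadow form of Kruskal--Katona yields $|\mathcal{I}(\mathcal{A})| \leq |\mathcal{I}(\mathcal{L}(|\mathcal{A}|))| =: h(|\mathcal{A}|)$. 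The function $h$ is non-increasing, $h(\binom{n}{k}) = 0$, and $h(\binom{n-1}{k-1}) = \binom{n-1}{k-1}$: here $\mathcal{L}(\binom{n-1}{k-1})$ is the full star at the point $1$, which coincides with its own cross-dual because $n \geq 2k$.

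Hence $\sum_{i=1}^t a_i \leq a_1 + (t-1)\min\{a_1, h(a_1)\}$. If $a_1 \leq \binom{n-1}{k-1}$ then $h(a_1) \geq \binom{n-1}{k-1} \geq a_1$, so the bound is $t a_1 \leq t\binom{n-1}{k-1}$. If $a_1 > \binom{n-1}{k-1}$ then $h(a_1) \leq a_1$, and it remains to show $g(m) := m + (t-1)h(m) \leq \max\{\binom{n}{k}, t\binom{n-1}{k-1}\}$ for $\binom{n-1}{k-1} \leq m \leq \binom{n}{k}$. Here $g(\binom{n-1}{k-1}) = t\binom{n-1}{k-1}$ and $g(\binom{n}{k}) = \binom{n}{k}$, so the content is that $g$ attains no interior value exceeding both endpoint values. \emph{This is the main obstacle}: it is not plain convexity of $g$ (which fails), but a finer statement. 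For $m \geq \binom{n-1}{k-1}$ the set $\mathcal{I}(\mathcal{L}(m))$ sits inside the star at $1$, and $h(m)$ is the size of a largest family cross-intersecting with a lexicographic initial segment --- a quantity controlled by a cross-intersecting problem for uniformities $k-1$ and $k$ on $n-1$ points. Via the Kruskal--Katona cascade one shows that $h$ is convex on each block between consecutive breakpoints of the binomial expansion, and that the resulting sequence of block-endpoint values of $g$ has no interior maximum, so the maximum of $g$ on $[\binom{n-1}{k-1}, \binom{n}{k}]$ is attained at $\binom{n-1}{k-1}$ or at $\binom{n}{k}$. This step is where the Frankl--Tokushige estimate and the ``several local convexities'' come in, and where the regimes $t < n/k$ and $t \geq n/k$ separate.

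For the characterization, assume equality; then the displayed inequality is tight at $m = a_1$. Either $a_1 = \binom{n}{k}$, forcing $\mathcal{A}_1 = \binom{[n]}{k}$, whence $\mathcal{A}_2 = \cdots = \mathcal{A}_t = \emptyset$ and $t \leq n/k$; or, by the convexity analysis together with $\sum a_i \leq t a_1$, one is forced to $a_1 = \binom{n-1}{k-1}$, in which case $a_i = \binom{n-1}{k-1}$ for all $i$, equality in the Kruskal--Katona step forces $\mathcal{A}_1$ to be a maximum intersecting family, the equalities $\mathcal{A}_i = \mathcal{I}(\mathcal{A}_1)$ for $i \geq 2$ force all the $\mathcal{A}_i$ to coincide, and --- when $n > 2k$ --- the uniqueness half of the Erd\H{o}s--Ko--Rado theorem identifies this common family as a full star. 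The delicate inputs are the precise, strict form of the local-convexity statement for $g$ on $[\binom{n-1}{k-1}, \binom{n}{k}]$, and the classification of all equality families in Kruskal--Katona (which are not merely lexicographic initial segments). The case $n = 2k$ is genuinely exceptional and must be handled separately --- already for $t = 2$, as the statement records, every $\mathcal{A}_1$ together with $\mathcal{A}_2 = \binom{[n]}{k} \setminus \{[n]\setminus A : A \in \mathcal{A}_1\}$ attains equality.
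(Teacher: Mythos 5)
The paper does not prove this theorem; it is stated as classical background and cited to Hilton's 1977 paper [H], so there is no proof in the source to compare against. With that caveat, here is an assessment of your argument on its own terms.

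Your reduction is sound and cleanly stated: relabel so $a_1\ge\cdots\ge a_t$, observe that $\mathcal{A}_i\subseteq\mathcal{I}(\mathcal{A}_1)$ for $i\ge 2$, and invoke the cross-intersecting form of Kruskal--Katona (Theorem \ref{3} of the paper) to pass to $h(a_1)=|\mathcal{I}(\mathcal{L}(a_1))|$, arriving at $\sum a_i\le a_1+(t-1)\min\{a_1,h(a_1)\}$. The dichotomy at $a_1=\binom{n-1}{k-1}$ is also handled correctly, and the trivial regime $a_1\le\binom{n-1}{k-1}$ is dispatched. Where the proof is genuinely incomplete is exactly where you say it is: the claim that $g(m)=m+(t-1)h(m)$ attains its maximum on $\big[\binom{n-1}{k-1},\binom{n}{k}\big]$ at an endpoint. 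You flag this as ``the main obstacle,'' explain that it is not plain convexity, and then assert ``one shows'' the block-convexity and no-interior-maximum properties without proving either. That is the entire analytic content of Hilton's inequality, and it cannot be waved past. Without a concrete estimate of the increments $h(m)-h(m+1)$ (for instance, by writing $m$ in its cascade form and tracking how the partner set in the lex order moves, in the spirit of Propositions \ref{prop9} and \ref{clm23} of the paper), the bound $\sum a_i\le\max\{\binom{n}{k},t\binom{n-1}{k-1}\}$ is not established.

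The characterization part has the same flavor of gap. You assert that equality forces $a_1\in\{\binom{n-1}{k-1},\binom{n}{k}\}$, which depends on the unproven strictness of the endpoint-maximum claim; and you appeal to ``the classification of all equality families in Kruskal--Katona'' without stating or using it. That classification is delicate (extremal families for KK are not only lex/colex initial segments), and it is not obvious that the chain of inequalities in your reduction is tight only for stars. A cleaner route for the uniqueness half, once the inequality is in hand, is to note that if $a_1=\cdots=a_t=\binom{n-1}{k-1}$ and some $\mathcal{A}_i\neq\mathcal{A}_j$, then $\mathcal{A}_i\cup\mathcal{A}_j$ is a family cross-intersecting with $\mathcal{A}_1$ of size exceeding $\binom{n-1}{k-1}$, contradicting the KK bound when $n>2k$; you would still need to rule out non-star maximum cross-intersecting families, which is where EKR uniqueness enters and where $n=2k$ must be excluded. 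As written, your argument identifies the right shape of the proof and the right tools, but leaves the load-bearing step as a promissory note.

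Finally, note that Hilton's original proof does not go through Kruskal--Katona at all: it is a cyclic-permutation (Katona-cycle) argument. Your proposed route is methodologically closer to what this paper does for its own Theorem \ref{2}, which is a reasonable idea, but the local-convexity machinery the paper builds (Lemmas \ref{clm3}--\ref{clm30}) is exactly the kind of work you would have to reproduce to make your endpoint-maximum claim rigorous, and you have not done so.
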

For non-empty situation, Hilton and Milner gave the following result.
\begin{theorem}[Hilton--Milner, \cite{HM1967}]\label{HM}
Let $n$ and $k$ be positive integers with $n\geq 2k$ and $\mathcal{A}, \mathcal{B}\subset {[n]\choose k}$. If $\mathcal{A}$ and $\mathcal{B}$ are non-empty cross-intersecting, then
$$
|\mathcal{A}|+|\mathcal{B}|\leq {n\choose k} -{n-k\choose k}+1.
$$
\end{theorem}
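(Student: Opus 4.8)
The plan is to invoke the Kruskal--Katona theorem to compress $\mathcal A$ and $\mathcal B$ into initial segments, reducing the statement to a one-parameter inequality about shadows, and then to prove that inequality. First I rephrase the hypothesis: fix $\mathcal B$ and take $\mathcal A$ as large as possible. A $k$-set $A$ is admissible for $\mathcal A$ exactly when it meets every $B\in\mathcal B$, i.e. when $A\subseteq[n]\setminus B$ for no $B\in\mathcal B$; writing $\mathcal G:=\{[n]\setminus B:B\in\mathcal B\}\subseteq\binom{[n]}{n-k}$ (a legitimate level since $n\ge 2k$ gives $n-k\ge k$), this says precisely that the $k$-sets \emph{excluded} from $\mathcal A$ are the members of the lower $k$-shadow $\partial_k\mathcal G$. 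Hence, with $\mathcal A$ chosen maximal,
\[
|\mathcal A|+|\mathcal B|\ \le\ \binom nk-\bigl|\partial_k\mathcal G\bigr|+|\mathcal G| .
\]
By Kruskal--Katona, for a fixed value $|\mathcal G|=b$ the size $|\partial_k\mathcal G|$ is smallest when $\mathcal G$ is the initial segment of colexicographic order; replacing $\mathcal G$ by this initial segment and $\mathcal A$ by the induced maximal family keeps both $\mathcal A$ and $\mathcal B$ non-empty, keeps $b$, and cannot decrease $|\mathcal A|+|\mathcal B|$ (this is the Kruskal--Katona normalisation the paper records). So it remains to prove that, for every colex initial segment $\mathcal G$ of size $b\ge 1$ with $|\partial_k\mathcal G|<\binom nk$ (i.e. with the induced $\mathcal A$ non-empty),
\[
\bigl|\partial_k\mathcal G\bigr|\ \ge\ |\mathcal G|+\binom{n-k}{k}-1 .
\]
At $b=1$ this holds with equality, and the corresponding pair $\mathcal B=\{B_0\}$, $\mathcal A=\{A:A\cap B_0\ne\emptyset\}$ is one family of extremal examples.

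The displayed inequality I would prove by induction on $n$, peeling the top term off the colex expansion $b=\binom a{n-k}+b'$ with $a$ maximal: then $\mathcal G=\binom{[a]}{n-k}\cup\{G\cup\{a+1\}:G\in\mathcal G'\}$ for a colex initial segment $\mathcal G'\subseteq\binom{[a]}{n-k-1}$ of size $b'$, which yields $|\partial_k\mathcal G|=\binom ak+|\partial_{k-1}\mathcal G'|$ (for $n>2k$; the case $n=2k$ is trivial, since then $\partial_k\mathcal G=\mathcal G$), and hence
\[
\bigl|\partial_k\mathcal G\bigr|-|\mathcal G|\ =\ \Bigl(\binom ak-\binom a{n-k}\Bigr)+\bigl(|\partial_{k-1}\mathcal G'|-|\mathcal G'|\bigr).
\]
Non-emptiness of $\mathcal A$ forces $a\le n-1$, hence $a\le 2(n-k-1)$, so the inductive hypothesis applies to the second bracket whenever $\mathcal G'$ and its own induced family are non-empty; the few leftover cases ($b'=0$, or the smaller shadow already exhausting $\binom{[a]}{k-1}$) are handled directly. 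One is then left with an inequality in $a$ alone: $\varphi(a):=\binom ak-\binom a{n-k}$ is unimodal on $n-k\le a\le n-1$ and so minimised at an endpoint, where $\varphi(n-k)=\binom{n-k}{k}-1$ exactly and $\varphi(n-1)=\binom{n-1}{k}-\binom{n-1}{k-1}\ge\binom{n-k}{k}-1$ is a routine binomial estimate (as is the slightly stronger endpoint bound needed when $\mathcal G'\ne\emptyset$).

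The real obstacle is that the function $b\mapsto b+\binom nk-|\partial_k\mathcal G_b|$ is \emph{not} monotone: it never exceeds the target $\binom nk-\binom{n-k}{k}+1$, yet it returns to that value at several $b$, and this is how the other extremal configurations arise (for instance two full stars when $n=2k$). Consequently the naive idea ``each extra set of $\mathcal B$ costs at least one set of $\mathcal A$'' fails; the delicate point is to show that precisely when adjoining a set to $\mathcal G_b$ leaves $\partial_k\mathcal G_b$ unchanged --- which happens exactly when a complete subfamily $\binom{[j]}{n-k}$ gets finished off --- the inequality is still not violated. This is the same local-convexity behaviour the paper isolates for the general theorem, and tracking equality through the Kruskal--Katona and colex-peeling steps then yields the full list of extremal families.
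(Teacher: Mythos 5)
The paper does not prove Theorem~\ref{HM} at all --- it cites it as a known result of Hilton and Milner and uses it as background, so there is no ``paper's proof'' to compare against. Your reduction via Kruskal--Katona to the shadow inequality
$|\partial_k\mathcal G|\ge |\mathcal G|+\binom{n-k}{k}-1$
for colex initial segments is correct and is the same normalisation the paper invokes for its general Theorem~\ref{2}, so the overall strategy is sound and in the right spirit.

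The inductive step, however, has a genuine gap in the case $a=n-1$. After the colex peeling $\mathcal G=\binom{[a]}{n-k}\cup\{G'\cup\{a+1\}:G'\in\mathcal G'\}$, the inductive hypothesis you need is the same statement with parameters $(\tilde n,\tilde k)=(n-2,k-1)$, applied to $\mathcal G'\subset\binom{[a]}{n-k-1}$; its non-degeneracy condition is $|\partial_{k-1}\mathcal G'|<\binom{n-2}{k-1}$. But when $a=n-1$, the condition $|\partial_k\mathcal G|<\binom nk$ only guarantees $|\partial_{k-1}\mathcal G'|<\binom{n-1}{k-1}$, which is strictly weaker. So the whole range $\binom{n-2}{k-1}\le|\partial_{k-1}\mathcal G'|<\binom{n-1}{k-1}$ falls outside the inductive hypothesis, and it is \emph{not} the case you list in your parenthetical (``the smaller shadow already exhausting $\binom{[a]}{k-1}$'', i.e.\ $|\partial_{k-1}\mathcal G'|=\binom{a}{k-1}=\binom{n-1}{k-1}$, which would force $|\partial_k\mathcal G|=\binom nk$ and hence cannot occur). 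This missing range is nontrivial: e.g.\ for $k=2$ one has $\binom{n-1}{2}-\binom{n-1}{1}=\binom{n-2}{2}-1$ \emph{exactly}, so the ``first bracket'' $\varphi(n-1)$ carries no slack and the argument really does need a sharp bound on $|\partial_{k-1}\mathcal G'|-|\mathcal G'|$ in precisely the regime the induction does not reach. In the same vein, the assertion ``$a\le 2(n-k-1)$, so the inductive hypothesis applies'' does not track: what is needed is that $\mathcal G'$ (which lives in $\binom{[a]}{n-k-1}$) fit the $(\tilde n,\tilde k)=(n-2,k-1)$ frame, and for $a=n-1$ it may not.

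You do flag the essential obstacle --- that $b\mapsto b+\binom nk-|\partial_k\mathcal G_b|$ is non-monotone --- but then defer to ``the same local-convexity behaviour the paper isolates''. That is precisely the hard part, and what the paper's Lemmas~\ref{clm3}--\ref{clm30} exist to supply (in greater generality); gesturing at it does not close the proof. As written, the proposal is a correct reduction plus a plausible but incomplete induction with an unhandled case at $a=n-1$.
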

The upper bound is achievable at $\mathcal{A}=\{[k]\}$ and $\mathcal{B}=\{F\in {[n]\choose k}: F\cap [k]\ne \emptyset\}$.
More generally, Frankl and Tokushige showed that
\begin{theorem}[Frankl-Tokushige, \cite{FT}]\label{FT1992}
Let $\mathcal{A}\subset {[n]\choose k}$ and $\mathcal{B}\subset {[n]\choose l}$ be non-empty cross-intersecting families with $n\geq k+l$ and $k\geq l$. Then
$$|\mathcal{A}|+|\mathcal{B}|\leq{n\choose k}-{n-l\choose k}+1.$$
\end{theorem}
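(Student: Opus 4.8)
\medskip
\noindent The plan is to reduce, via the Kruskal--Katona theorem, to the case where $\mathcal{A}$ and $\mathcal{B}$ are initial segments of the lexicographic order, and then to a one-parameter extremal analysis. Recall the cross-intersecting consequence of Kruskal--Katona: if $\mathcal{A}\subseteq\binom{[n]}{k}$ and $\mathcal{B}\subseteq\binom{[n]}{l}$ are cross-intersecting, then so are the lexicographic initial segments $\mathcal{L}_k(|\mathcal{A}|)$ and $\mathcal{L}_l(|\mathcal{B}|)$ --- the first $|\mathcal{A}|$ members of $\binom{[n]}{k}$ and the first $|\mathcal{B}|$ members of $\binom{[n]}{l}$, respectively, in lexicographic order. (In complemented form, cross-intersection says that $\mathcal{A}$ is disjoint from the $k$-th iterated lower shadow of $\{[n]\setminus B:B\in\mathcal{B}\}$, and Kruskal--Katona shadow-minimization shows that replacing $\mathcal{B}$, and then $\mathcal{A}$, by an initial segment of the same size only relaxes this condition.) Since the replacement preserves cardinalities and non-emptiness, it suffices to prove the bound when $\mathcal{A}=\mathcal{L}_k(a)$ and $\mathcal{B}=\mathcal{L}_l(b)$ are cross-intersecting with $a,b\ge1$.

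\medskip
\noindent Next I would parametrize by the last (lexicographically largest) set $Q$ of $\mathcal{A}$, so that $a=|\mathcal{L}_k(Q)|$, where $\mathcal{L}_k(Q):=\{A\in\binom{[n]}{k}:A\le_{\mathrm{lex}}Q\}$. Because $\mathcal{B}$ is cross-intersecting with $\{Q\}$, every member of $\mathcal{B}$ meets $Q$; since $n-k\ge l$ the lexicographically smallest $l$-set disjoint from $Q$ is well defined, namely $B^-(Q):=$ the set of the $l$ smallest elements of $[n]\setminus Q$, and since $\mathcal{B}$ is an initial segment it cannot contain $B^-(Q)$, so $b\le|\mathcal{L}_l(B^-(Q))|-1$. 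Hence
\[
a+b\;\le\;\varphi(Q):=|\mathcal{L}_k(Q)|+|\mathcal{L}_l(B^-(Q))|-1 .
\]
Moreover $\mathcal{B}\ne\emptyset$ forces $|\mathcal{L}_l(B^-(Q))|\ge2$, equivalently $Q\cap[l]\ne\emptyset$; so everything reduces to proving $\varphi(Q)\le\binom{n}{k}-\binom{n-l}{k}+1$ for every $Q\in\binom{[n]}{k}$ with $Q\cap[l]\ne\emptyset$, with equality at $Q^{*}=\{l\}\cup\{n-k+2,\dots,n\}$, which is exactly the configuration $\mathcal{A}=\{A\in\binom{[n]}{k}:A\cap[l]\ne\emptyset\}$, $\mathcal{B}=\{[l]\}$ from the statement.

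\medskip
\noindent For the extremal step I would expand $|\mathcal{L}_k(Q)|$ and $|\mathcal{L}_l(B^-(Q))|$ as the standard sums of binomial coefficients in the coordinates of $Q$ and of $[n]\setminus Q$, and argue by local moves: if $Q$ is not already of the canonical shape $\{j\}\cup\{n-k+2,\dots,n\}$, then replacing $Q$ by a suitable later $k$-set (obtained by incrementing one coordinate) increases $|\mathcal{L}_k(Q)|$ by at least as much as it decreases $|\mathcal{L}_l(B^-(Q))|$, the key inequalities being convexity relations among the binomial sums involved. This drives $Q$ to the shape $\{j\}\cup\{n-k+2,\dots,n\}$ with $1\le j\le l$, for which $|\mathcal{L}_k(Q)|=\binom{n}{k}-\binom{n-j}{k}$, $B^-(Q)=[l+1]\setminus\{j\}$, $|\mathcal{L}_l(B^-(Q))|=\binom{n-j}{l-j}+1$, so that $\varphi(Q)=\binom{n}{k}-\binom{n-j}{k}+\binom{n-j}{l-j}$; here the increments $\varphi(j)-\varphi(j-1)=\binom{n-j}{k-1}-\binom{n-j}{l-j+1}$ are $\ge0$ for $2\le j\le l$ (since then $l-j+1\le k-1\le(n-j)-(l-j+1)$, using $k\ge l$ and $n\ge k+l$), so the maximum is at $j=l$ and equals $\binom{n}{k}-\binom{n-l}{k}+1$.

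\medskip
\noindent The reduction to initial segments is routine; the crux is this extremal analysis. The difficulty, relative to the uniform case $k=l$ (Hilton--Milner), is a genuine trade-off: $|\mathcal{L}_k(Q)|$ is large only when $Q$ avoids the small elements, which forces $[n]\setminus Q$ to contain them and hence makes $|\mathcal{L}_l(B^-(Q))|$ small, so bounding the two summands separately is far too lossy and the local-move bookkeeping has to be carried out carefully; moreover the non-emptiness hypothesis --- the origin of the ``$+1$'' --- must be tracked to rule out spurious optimizers such as $Q=[k]$ and the degeneracies when $k=l$. An alternative, less self-contained route bypasses the parametrization and uses the Kruskal--Katona--Lov\'asz bound directly: writing $|\mathcal{B}|=\binom{x}{n-l}$ with $x\ge n-l$ real, one gets $a+b\le\binom{n}{k}-\binom{x}{k}+\binom{x}{n-l}$, and the theorem reduces to the single-variable convexity inequality $\binom{x}{k}-\binom{x}{n-l}\ge\binom{n-l}{k}-1$ over the admissible range of $x$ (cut down by $\mathcal{A}\ne\emptyset$ and by $|\mathcal{B}|\le\binom{n}{l}$). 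One could also attempt induction on $n$, splitting each family according to whether it contains $n$, but controlling the interaction of the resulting traces --- in particular which of them stay non-empty --- is of comparable delicacy.
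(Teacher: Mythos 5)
The paper cites Theorem~\ref{FT1992} from Frankl--Tokushige and does not give a self-contained proof of it, but, as the authors note after Theorem~\ref{2}, their machinery reproves it as the $t=2$ case of the main theorem. Measured against that proof, your plan is essentially the same: reduce to L-initial families via Kruskal--Katona, parametrize by the lex-last set $Q$ of $\mathcal{A}$ (the paper's ``ID''), use the partner/smallest-disjoint-set to bound $|\mathcal{B}|$, obtain a one-variable function $\varphi(Q)$ of the ID, and maximize it. Your endgame is correct: the identities $|\mathcal{L}_k(\{j\}\cup\{n-k+2,\dots,n\})|=\binom{n}{k}-\binom{n-j}{k}$, $|\mathcal{L}_l(B^-(Q))|=\binom{n-j}{l-j}+1$, the telescoping increment $\binom{n-j}{k-1}-\binom{n-j}{l-j+1}\ge 0$ for $2\le j\le l$ (using $k\ge l$ and $n\ge k+l$), and the equality case at $j=l$ all check out, and they match the paper's values $f_1(\{1\})$ and $f_1(\{m\})$ from~(\ref{f1}) and~(\ref{fm}).

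The gap is exactly where you flag ``the local-move bookkeeping.'' Your stated reduction --- that from any non-canonical $Q$ one can pass to a later $Q'$ by incrementing a single coordinate so that $\varphi(Q')\ge\varphi(Q)$ --- is not true as a pointwise monotonicity claim. For instance, with $n=10,\ k=5,\ l=3$ one computes $\varphi(\{2,3,4,5,6\})=127+31-1=157$ but $\varphi(\{2,3,4,5,7\})=128+28-1=155$: incrementing the last coordinate by one strictly decreases $\varphi$ (the gain $\binom{9}{4}+\binom{4}{0}+\binom{3}{0}-127=1$ in $|\mathcal{L}_k|$ is outweighed by the loss $31-28=3$ in $|\mathcal{L}_l(B^-)|$). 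What is true, and what the paper establishes at length in Lemmas~\ref{clm3}, \ref{clm28}, \ref{clm29}, \ref{clm30}, is a \emph{local convexity} (``valley'') property along each $c$-sequential chain: once $\varphi$ increases it keeps increasing, so its maximum on the chain is attained at an endpoint. In the example, $\varphi(\{2,3,4,5,10\})=131+27-1=157$ again, consistent with the valley shape. This weaker statement suffices, and when combined with the $\mathcal{R}(j)$-reduction (Lemma~\ref{clm28}) it does drive the ID to the canonical $\{j\}\cup\{n-k+2,\dots,n\}$, but the inductive bookkeeping is precisely where the paper invests several pages of work. To complete your argument you would need to replace the asserted monotonicity with a valley lemma of this type (or, for the $t=2$ case, argue directly that $\alpha-\beta$ along a $1$-sequential chain changes sign at most once, since $\alpha\equiv 1$ and $\beta$ is a single binomial coefficient that is eventually $0$) and then carry out the nested chain reduction. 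Your alternative sketch via the Lov\'asz real-exponent form of Kruskal--Katona is a genuinely different and plausibly shorter route, closer in spirit to the original Frankl--Tokushige argument, but is likewise only sketched.
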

The upper bound is achievable at $\mathcal{A}=\{[l]\}$ and $\mathcal{B}= \{F\in {[n]\choose k}: F\cap [l]\ne \emptyset\}$.
Borg and Feghali \cite{BF} got the analogous maximum sum problem for the case when
$\mathcal{A}\subset {[n]\choose \leq r}$ and $\mathcal{B}\subset {[n]\choose \leq s}$.
\begin{theorem}[Borg--Feghali, \cite{BF}]
Let $n\geq 1, 1\leq r\leq s, \mathcal{A}\subset  {[n]\choose \leq r}$ and $\mathcal{B}\subset { [n]\choose \leq s}$. If $\mathcal{A}$ and $\mathcal{B}$ are non-empty cross-intersecting, then
 $$|\mathcal{A}|+|\mathcal{B}|\leq 1+\sum_{i=1}^s\left( {n\choose i}-{n-r\choose i}\right),$$
 and equality holds if $\mathcal{A}=\{[r]\}$ and $\mathcal{B}=\{ B\in { [n] \choose \leq s} : B\cap [r]\ne \emptyset\}.$
\end{theorem}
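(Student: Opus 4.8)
The plan combines the compression (shifting) technique with an extremal ``delete one set from $\mathcal{A}$ and refill $\mathcal{B}$'' argument; essentially all the content is in the case $|\mathcal{A}|\geq 2$.

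\medskip

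First fix a non-empty cross-intersecting pair $(\mathcal{A},\mathcal{B})$ with $|\mathcal{A}|+|\mathcal{B}|$ as large as possible among all such pairs, and among these choose one with $|\mathcal{A}|$ minimum. Applying the usual $(i,j)$-shift $S_{ij}$ ($i<j$) simultaneously to $\mathcal{A}$ and $\mathcal{B}$ preserves membership in ${[n]\choose\leq r}$ and ${[n]\choose\leq s}$, preserves all cardinalities, preserves non-emptiness, and preserves cross-intersection, so we may assume in addition that $\mathcal{A}$ and $\mathcal{B}$ are left-compressed. Put $a=\min\{|A|:A\in\mathcal{A}\}$. Since $\mathcal{A}$ is compressed, $[a]\in\mathcal{A}$, so every $B\in\mathcal{B}$ meets $[a]$ and hence $|\mathcal{B}|\leq\sum_{i=1}^{s}\big({n\choose i}-{n-a\choose i}\big)$. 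If $|\mathcal{A}|=1$, then $\mathcal{A}=\{[a]\}$ and, using $a\leq r$ together with ${n-a\choose i}\geq{n-r\choose i}$,
\[ |\mathcal{A}|+|\mathcal{B}|\ \leq\ 1+\sum_{i=1}^{s}\Big({n\choose i}-{n-a\choose i}\Big)\ \leq\ 1+\sum_{i=1}^{s}\Big({n\choose i}-{n-r\choose i}\Big), \]
which is the asserted bound; moreover $\mathcal{A}=\{[r]\}$, $\mathcal{B}=\{B\in{[n]\choose\leq s}:B\cap[r]\neq\emptyset\}$ attains it, since then $|\mathcal{B}|=\sum_{i=1}^{s}\big({n\choose i}-{n-r\choose i}\big)$ by a direct count. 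So it suffices to show that the case $|\mathcal{A}|\geq 2$ cannot occur.

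\medskip

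Assume $|\mathcal{A}|\geq 2$. For any $A_{1}\in\mathcal{A}$ put $\mathcal{B}^{+}=\{B\in{[n]\choose\leq s}:B\cap A\neq\emptyset\text{ for all }A\in\mathcal{A}\setminus\{A_{1}\}\}\supseteq\mathcal{B}$. Then $(\mathcal{A}\setminus\{A_{1}\},\mathcal{B}^{+})$ is a non-empty cross-intersecting pair with $|\mathcal{A}\setminus\{A_{1}\}|<|\mathcal{A}|$, so maximality of the sum gives $|\mathcal{B}^{+}|\leq|\mathcal{B}|+1$, and if equality held this pair would be a maximizer with strictly smaller first family, contradicting our choice. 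Hence $\mathcal{B}^{+}=\mathcal{B}$ for every $A_{1}\in\mathcal{A}$; equivalently, for every $A_{1}\in\mathcal{A}$, \emph{every} $B\in{[n]\choose\leq s}$ disjoint from $A_{1}$ must miss some member of $\mathcal{A}\setminus\{A_{1}\}$. Taking $A_{1}=[a]$: if $n-a\leq s$ the set $B=[n]\setminus[a]$ would have to be disjoint from some $A'\in\mathcal{A}\setminus\{[a]\}$, forcing $A'\subsetneq[a]$ and so a member of $\mathcal{A}$ of size $<a$, which is impossible; therefore $n\geq a+s+1$. In this regime I would finish by producing a transversal of $\mathcal{A}\setminus\{[a]\}$ of size $\leq s$ contained in $\{a+1,\dots,n\}$, since such a set $B$ contradicts the italicized property above: every member of $\mathcal{A}\setminus\{[a]\}$ meets $\{a+1,\dots,n\}$ in a non-empty set of size $\leq r$, and left-compressedness of $\mathcal{A}$ forces these traces to be initial-segment-like enough to be hit greedily by few elements (using $n\geq a+s+1$); one also branches here on how $a$ compares with $r$, and on whether $\mathcal{A}$ is intersecting — if it is not, then non-emptiness of $\mathcal{B}$ already forces $\mathcal{B}$ to be small and the bound is cheap.

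\medskip

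The main obstacle is exactly this last step: converting the structural condition ``deleting any single set from $\mathcal{A}$ frees no new set for $\mathcal{B}$'' into a genuine small hitting set of $\mathcal{A}\setminus\{[a]\}$, uniformly in the regime $n\geq a+s+1$. The reason the classical shifting/Kruskal--Katona reasoning does not transfer verbatim is the weak hypothesis $n\geq 1$ (rather than $n\geq r+s$): for small $n$ whole uniform families ${[n]\choose j}$ are themselves cross-intersecting, the set of extremal pairs is larger, and the argument must split according to the comparison of $a$ with $r$. A fall-back, should the transversal step prove delicate, is induction on $n$: split $\mathcal{A}$ and $\mathcal{B}$ according to whether $1$ lies in the set, pass to the resulting deletions and links on the ground set $\{2,\dots,n\}$ (three of the four resulting cross-intersecting conditions hold automatically, and left-compressedness is exactly what controls the fourth), apply the inductive bound to the appropriate sub-pairs, and reassemble — with the bookkeeping to hit the exact closed form and to treat the degenerate cases where one of the four sub-families is empty as the remaining work.
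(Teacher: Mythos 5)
This theorem is a cited background result from Borg--Feghali \cite{BF}; the present paper states it in the introduction without giving a proof, so there is no in-paper argument to compare your attempt against. I will assess the proposal on its own terms.

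The overall skeleton is sound as far as it goes. Choosing a sum-maximizing pair with $|\mathcal{A}|$ smallest, then compressing (simultaneous shifting does preserve cross-intersection, the size caps $\leq r$ and $\leq s$, non-emptiness, and both cardinalities) is legitimate, and $[a]\in\mathcal{A}$ with $a=\min\{|A|:A\in\mathcal{A}\}$ then follows. The $|\mathcal{A}|=1$ case is handled correctly, including the monotonicity $\binom{n-a}{i}\geq\binom{n-r}{i}$. The deletion step is also correct: for each $A_1\in\mathcal{A}$ the enlarged family $\mathcal{B}^{+}$ satisfies $\mathcal{B}\subseteq\mathcal{B}^{+}$ and $|\mathcal{B}^{+}|\leq|\mathcal{B}|+1$ by maximality, and equality would contradict minimality of $|\mathcal{A}|$, giving $\mathcal{B}^{+}=\mathcal{B}$; the deduction $n\geq a+s+1$ from the choice $A_1=[a]$, $B=[n]\setminus[a]$ is also right.

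However, the proof is not complete, and you say so yourself. The missing step — producing a transversal $B\subseteq[a+1,n]$ of $\mathcal{A}\setminus\{[a]\}$ with $|B|\leq s$ — is genuinely the crux, not a formality, and compressedness does not obviously deliver it. The natural candidate $B=[a+1,a+s]$ (available since $n\geq a+s+1$) can fail: compressedness guarantees that if some $A'\in\mathcal{A}$ has its entire trace $A'\setminus[a]$ inside $[a+s+1,n]$, then various shifted copies $(A'\setminus\{j\})\cup\{a+1\}$ also lie in $\mathcal{A}$ and are hit by $B$, but that does nothing to make $B$ hit $A'$ itself, which is the set you need to intersect. Nor does the ``hit greedily'' heuristic clearly terminate within $s$ steps, since the traces $A'\setminus[a]$ can be scattered. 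The fall-back you sketch (induction on $n$ via the $\{1\}$-link and $\{1\}$-deletion of both families) is likewise left at the level of a plan: the four cross-intersection conditions, which sub-pairs are empty, and the exact reassembly of the closed form are all unresolved. So as written there is a genuine gap; the argument establishes the bound only in the case $|\mathcal{A}|=1$ and reduces the general case to a hitting-set claim that is asserted but not proved.
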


Recently, Shi, Frankl and Qian proved the following result.
\begin{theorem}[Shi--Frankl--Qian, \cite{SFQ2020}]\label{SFQ}
Let $n, k, l, r$ be integers with $n\geq k+l, l\geq r\geq 1$, $c$ be a positive constant and $\mathcal{A}\subset {[n]\choose k}, \mathcal{B}\subset {[n]\choose l}$. If $\mathcal{A}$ and $\mathcal{B}$ are cross-intersecting and ${n-r\choose l-r}\leq |\mathcal{B}|\leq {n-1\choose l-1}$, then
$$|\mathcal{A}|+c|\mathcal{B}|\leq \textup{max} \left\{{n\choose k}-{n-r\choose k}+c{n-r\choose l-r}, \,{n-1\choose k-1}+c{n-1\choose l-1}\right\}$$
and the upper bound is sharp.
\end{theorem}
In \cite{SFQ2020}, Shi, Frankl and Qian also gave the families attaining the equality.
Setting $c=t-1$ in Theorem \ref{SFQ}, they got the following interesting corollary which is a generalization of Theorem \ref{HM}.

\begin{theorem}[Shi--Frankl--Qian, \cite{SFQ2020}]\label{sfq2020}
Let $n$ and $k$ be positive integers with $n\geq 2k$ and $t\geq 2$. If $\mathcal{A}_1, \mathcal{A}_2, \dots, \mathcal{A}_t\subset {[n]\choose k}$ are non-empty pairwise cross-intersecting families, then
$$
\sum_{i=1}^t{|\mathcal{A}_i|}\leq \textup{max} \left\{{n\choose k}-{n-k\choose k}+t-1, t{n-1\choose k-1}\right\},
$$
and the upper bound is sharp.
\end{theorem}

Furthermore, Shi, Frankl and Qian \cite{SFQ2020} proposed the following problem.

\begin{problem}(Shi--Frankl--Qian, \cite{SFQ2020})\label{1}
Let $\mathcal{A}_1\subset{[n]\choose k_1}, \mathcal{A}_2\subset{[n]\choose k_2}, \dots, \mathcal{A}_t\subset{[n]\choose k_t}$ be non-empty pairwise cross-intersecting families with $t\geq 2$, $k_1\geq k_2\geq \cdots \geq k_t$, and $n\geq k_1+k_2$. Is it true that
$$
\sum_{i=1}^t{|\mathcal{A}_i|}\leq \max \left\{{n\choose k_1}-{n-k_t\choose k_1}+\sum_{i=2}^t{{n-k_t\choose k_i-k_t}}, \sum_{i=1}^t{n-1\choose k_i-1}\right\}?
$$

\end{problem}

 As mentioned above that Shi, Frankl and Qian \cite{SFQ2020} obtained a positive answer to the above problem for the special case that $k_1=k_2=\cdots=k_t$ (Theorem \ref{sfq2020}) by taking $c=t-1$ in the result of the maximum value of  $|\mathcal{A}|+c|\mathcal{B}|$ for two non-empty cross-intersecting families $\mathcal{A}$ and $\mathcal{B}$ (Theorem \ref{SFQ}). In \cite{huangpeng} and \cite{ZF2023}, both groups  gave a positive answer to the above problem by different methods.
 \begin{theorem}\cite{huangpeng, ZF2023}\label{hpzf}
Let $\mathcal{A}_1\subset{[n]\choose k_1}, \mathcal{A}_2\subset{[n]\choose k_2}, \dots, \mathcal{A}_t\subset{[n]\choose k_t}$ be non-empty pairwise cross-intersecting families with $t\geq 2$, $k_1\geq k_2\geq \cdots \geq k_t$, and $n\geq k_1+k_2$. Then
$$
\sum_{i=1}^t{|\mathcal{A}_i|}\leq \textup{max} \left\{{n\choose k_1}-{n-k_t\choose k_1}+\sum_{i=2}^t{{n-k_t\choose k_i-k_t}}, \,\,\sum_{i=1}^t{n-1\choose k_i-1}\right\},
$$
and the bound is sharp.
\end{theorem}
The bound is sharp by the following non-empty pairwise cross-intersecting families:
Let $\mathcal{A}_j=\{F\in{[n]\choose k_j}: 1\in F\}$ for each $j\in [t]$, then $\mathcal{A}_j$, $1\le j\le t$ are non-empty pairwise cross-intersecting families.  Let $\mathcal{A}_1=\{F\in {[n]\choose k_i}: F\cap [k_t] \ne\emptyset\}$ and $\mathcal{A}_j=\{F\in {[n]\choose k_j}: [k_t]\subset F\}$ for each $j\in [t]\setminus \{1\}$, then $\mathcal{A}_j$, $1\le j\le t$ are non-empty pairwise cross-intersecting families. In \cite{huangpeng} and \cite{ZF2023}, the authors also gave the families attaining the equality.

 In this paper, we give more general results. Our main results are as follows.

For a family ${\mathcal{A}}$ of subsets $[n]$,  let $\overline{\mathcal{A}}=\{[n]\setminus A: A\in \mathcal{A}\}$.

\begin{theorem}\label{2}
Let $n$, $t\geq 2$, $k_1, k_2, \dots, k_t$ be positive integers and $d_1, d_2, \dots, d_t$ be positive numbers. Let
$\mathcal{A}_1\subset{[n]\choose k_1}, \mathcal{A}_2\subset{[n]\choose k_2}, \dots, \mathcal{A}_t\subset{[n]\choose k_t}$ be non-empty pairwise cross-intersecting families with $|\mathcal{A}_i|\geq {n-1\choose k_i-1}$ for some $i\in [t]$. Let $m_i$ be the minimum integer among $k_j$, where $j\in [t]\setminus \{i\}$. If $n\geq k_i+k_j$ for all $j\in [t]\setminus \{i\}$, then
$$\sum_{1=j}^td_j|\mathcal{A}_j|\leq \max  \left\{d_i{n\choose k_i}-d_i{n-m_i\choose k_i}+\sum_{j\ne i}d_j{n-m_i\choose k_j-m_i}, \,\,\sum_{j=1}^td_j{n-1\choose k_j-1}\right\},$$
the equality holds if and only if one of the following holds.\\
(1) If $d_i{n\choose k_i}-d_i{n-k_t\choose k_i}+\sum_{j\ne i}^td_j{n-m_i\choose k_j-m_i}\geq \sum_{j=1}^td_j{n-1\choose k_j-1}$, then  there is some $m_i$-element set $T\subset [n]$ such that $\mathcal{A}_i=\{F\in {[n]\choose k_i}: F\cap T\ne\emptyset\}$ and $\mathcal{A}_j=\{F\in {[n]\choose k_j}: T\subset F\}$ for each $j\in [t]\setminus \{i\}$.\\
(2) If
$d_i{n\choose k_i}-d_i{n-k_t\choose k_i}+\sum_{j\ne i}^td_j{n-m_i\choose k_j-m_i}\leq \sum_{j=1}^td_j{n-1\choose k_j-1}$,
then there is some $a\in [n]$ such that $\mathcal{A}_j=\{F\in{[n]\choose k_j}: a\in F\}$ for each $j\in [t]$. \\
(3) If $t=2$ and $n=k_i+k_{3-i}$.  If $d_i\leq d_{3-i}$,  then $\mathcal{A}_{3-i}\subseteq {[n]\choose k_{3-i}}$ with $|\mathcal{A}_{3-i}|={n-1\choose k_{3-i}-1}$ and $\mathcal{A}_i={[n]\choose k_i}\setminus \overline{\mathcal{A}_{3-i}}$. \\
(4)
If $n=k_i+k_j$ holds for every $j\in [t]\setminus \{i\}$ and $\sum_{j\ne i}d_j=d_i$, then  $\mathcal{A}_j=\mathcal{A}$ for all $j\in [t]\setminus \{i\}$, where $\mathcal{A}\subseteq {[n]\choose k}$ is an intersecting family with size $|\mathcal{A}|={n-1\choose k-1}$, and $\mathcal{A}_i={[n]\choose k_i}\setminus \overline{\mathcal{A}}$.


\end{theorem}

Let $$\lambda_i=d_i{n\choose k_i}-d_i{n-m_i\choose k_i}+\sum_{j\ne i}d_j{n-m_i\choose k_j-m_i}.$$

\begin{theorem}\label{2+}
Let $n$,  $t\geq 2$, $k_1, k_2, \dots, k_t$ be positive integers with
$k_1\geq k_2\geq \dots\geq k_t$ and $n\geq k_1+k_2$.
 Let $d_1, d_2, \dots, d_t$ be positive numbers. Let
$\mathcal{A}_1\subset{[n]\choose k_1}, \mathcal{A}_2\subset{[n]\choose k_2}, \dots, \mathcal{A}_t\subset{[n]\choose k_t}$ be non-empty pairwise cross-intersecting families.  Let $m_i$ be the minimum integer among $k_j$, where $j\in [t]\setminus \{i\}$, and  $$\lambda_i=d_i{n\choose k_i}-d_i{n-m_i\choose k_i}+\sum_{j\ne i}d_j{n-m_i\choose k_j-m_i}.$$ Then
$$\sum_{1=j}^td_j|\mathcal{A}_j|\leq \max  \left\{\lambda_1, \dots, \lambda_t,\,\sum_{j=1}^td_j{n-1\choose k_j-1}\right\},$$
and the bound is sharp.
\end{theorem}
The bound is sharp by the following non-empty pairwise cross-intersecting families:
Let $\mathcal{A}_j=\{F\in{[n]\choose k_j}: 1\in F\}$ for each $j\in [t]$, then $\mathcal{A}_j$, $1\le j\le t$ are non-empty pairwise cross-intersecting families.  Let $\mathcal{A}_i=\{F\in {[n]\choose k_i}: F\cap [m_i] \ne\emptyset\}$ and $\mathcal{A}_j=\{F\in {[n]\choose k_j}: [m_i]\subset F\}$ for each $j\in [t]\setminus \{i\}$, then $\mathcal{A}_j$, $1\le j\le t$ are non-empty pairwise cross-intersecting families.

\begin{theorem}\label{2++}
Let $t\geq 2$, $k_1, k_2, \dots, k_t$ be positive integers with
$k_1> k_2> \dots> k_t$ and $n\geq k_1+k_2$. Let $d_1, d_2, \dots, d_t$ be positive numbers with $d_1\geq d_2\geq \dots\geq d_t$. Let
$\mathcal{A}_1\subset{[n]\choose k_1}, \mathcal{A}_2\subset{[n]\choose k_2}, \dots, \mathcal{A}_t\subset{[n]\choose k_t}$ be non-empty pairwise cross-intersecting families.  Then
$$\sum_{j=1}^td_j|\mathcal{A}_j|\leq \max  \left\{d_1{n\choose k_1}-d_1{n-k_t\choose k_1}+\sum_{j=2}^td_j{n-k_t\choose k_j-k_t}, \,\,\sum_{j=1}^td_j{n-1\choose k_j-1}\right\},$$
and the bound is sharp.
\end{theorem}
The bound is sharp by the following non-empty pairwise cross-intersecting families:
Let $\mathcal{A}_j=\{F\in{[n]\choose k_j}: 1\in F\}$ for each $j\in [t]$, then $\mathcal{A}_j$, $1\le j\le t$ are non-empty pairwise cross-intersecting families.  Let $\mathcal{A}_1=\{F\in {[n]\choose k_1}: F\cap [k_t] \ne\emptyset\}$ and $\mathcal{A}_j=\{F\in {[n]\choose k_j}: [k_t]\subset F\}$ for each $j\in [t]\setminus \{1\}$, then $\mathcal{A}_j$, $1\le j\le t$ are non-empty pairwise cross-intersecting families.

 One application of our main results is for `mixing' cases  while other previous known results (for example, Theorem \ref{hpzf}) do not have such an application, let us explain.  Note that if  $n< k+l$, then all families contained in ${[n]\choose k}$ and all families contained in ${[n]\choose l}$ are cross-intersecting. In this situation, we say that families contained in ${[n]\choose k}$ and  families contained in ${[n]\choose l}$ are automatically cross-intersecting.
 The condition that $n\geq k_1+k_2$ in Theorems \ref{2+} and \ref{2++} is to guarantee that no pair of families contained in ${[n]\choose k_i}$ and ${[n]\choose k_j}$ are automatically cross-intersecting for any $1\le i<j\le t$. It is also interesting to consider the maximum value of $\sum_{1=j}^t |\mathcal{A}_j|$ for non-empty  pairwise cross-intersecting families when families contained in ${[n]\choose k_i}$ and ${[n]\choose k_j}$ are automatically cross-intersecting for some but not all $i$ and $j$.
 For example, if $k_1+k_3\le n<k_1+k_2$, then all  families contained in  ${[n]\choose k_1}$ and all  families contained in  ${[n]\choose k_2}$   are automatically cross-intersecting, on the other hand, families contained in ${[n]\choose k_i}$  and  families contained in ${[n]\choose k_j}$ are not  automatically cross-intersecting for $\{i, j\}\neq \{1, 2\}$. Let us use this as an example to explain the application of Theorem \ref{2++} to this type of  `mixing' cases (some of them but not all of them are automatically cross-intersecting).  Let us be precise: Let $t\geq 2$, $k_1, k_2, \dots, k_t$ be positive integers with
$k=k_1= k_2\geq \dots\geq k_t$ and $k_1+k_3\le n<k_1+k_2$. Let $\mathcal{A}_1\subset{[n]\choose k_1}, \mathcal{A}_2\subset{[n]\choose k_2}, \dots, \mathcal{A}_t\subset{[n]\choose k_t}$ be non-empty pairwise cross-intersecting families. What is the maximum value of $\sum_{j=1}^t |\mathcal{A}_j|$? Taking $d_1=1$ and $d_i={1 \over 2}$ for $i\in [3, t]$, and applying Theorem \ref{2++} to $\mathcal{A}_1, \mathcal{A}_3, \mathcal{A}_4, \cdots \mathcal{A}_t$, we have
\begin{equation}\label{appeq1}
 |\mathcal{A}_1|+{1 \over 2}\sum_{j=3}^t|\mathcal{A}_j|\leq \max  \left\{{n\choose k}-{n-k_t\choose k}+{1 \over 2}\sum_{j=3}^t {n-k_t\choose k_j-k_t}, \,\,{n-1\choose k-1}+{1 \over 2}\sum_{j=3}^t{n-1\choose k_j-1}\right\}.
 \end{equation}
Taking $d_2=1$ and $d_i={1 \over 2}$ for $i\in [3, t]$, and applying Theorem \ref{2++} to $\mathcal{A}_2, \mathcal{A}_3, \mathcal{A}_4, \cdots \mathcal{A}_t$, we have
\begin{equation}\label{appeq2}
 |\mathcal{A}_2|+{1 \over 2}\sum_{j=3}^t|\mathcal{A}_j|\leq \max  \left\{{n\choose k}-{n-k_t\choose k}+{1 \over 2}\sum_{j=3}^t {n-k_t\choose k_j-k_t}, \,\,{n-1\choose k-1}+{1 \over 2}\sum_{j=3}^t{n-1\choose k_j-1}\right\}.
 \end{equation}
 Combining inequalities (\ref{appeq1}) and (\ref{appeq2}), we have
 $$\sum_{1=j}^t |\mathcal{A}_j|\le \max  \left\{2({n\choose k}-{n-k_t\choose k})+\sum_{j=3}^t {n-k_t\choose k_j-k_t}, \,\,2{n-1\choose k-1}+\sum_{j=3}^t{n-1\choose k_j-1}\right\}.$$
 In summary, we have the following corollary.
 \begin{corollary}
 Let $t\geq 2$, $k_1, k_2, \dots, k_t$ be positive integers with
$k=k_1= k_2\geq \dots\geq k_t$ and $k_1+k_3\le n<k_1+k_2$. Let $\mathcal{A}_1\subset{[n]\choose k_1}, \mathcal{A}_2\subset{[n]\choose k_2}, \dots, \mathcal{A}_t\subset{[n]\choose k_t}$ be non-empty pairwise cross-intersecting families. Then
 $$\sum_{1=j}^t |\mathcal{A}_j|\le \max  \left\{2({n\choose k}-{n-k_t\choose k})+\sum_{j=3}^t {n-k_t\choose k_j-k_t}, \,\,2{n-1\choose k-1}+\sum_{j=3}^t{n-1\choose k_j-1}\right\},$$
and the bound is sharp.
 \end{corollary}
If we relax $n$ to a more general condition $n<k_1+k_2$ and relax the condition $k_1= k_2$ to the natural condition $k_1\le k_2$, then it is far from determining the maximum value of $\sum_{1=j}^t |\mathcal{A}_j|$ to simply apply Theorem \ref{2++}. Developing the method in this paper and overcoming more challenges, we can go further and answer the general question: what is $\sum_{j=1}^t |\mathcal{A}_j|$ for any `mixing' case. Indeed, our method is a basis,  there are more ingredients in the proof, we will reveal it in another manuscript.

In both \cite{SFQ2020} and our paper, a result of Kruskal-Katona (Theorem \ref{3}) is applied to allow us to consider only families $\mathcal{A}_i$ whose members are the first $|\mathcal{A}_i|$ members in lexicographic order.
We analyze the relationship between  $\sum_{j=1}^td_j|\mathcal{A}_j|$ and the last member (in the lexicographic order) of  $\mathcal{A}_i$. Let $R$ be the last member of  $\mathcal{A}_i$, we will bound $\sum_{j=1}^td_j|\mathcal{A}_j|$ by a function $f_i(R)$. In order to do this, we  introduce new  concepts `$c$-sequential' and `down-up family', and show four types of  `local unimodality' of  $-f_i(R)$  in Lemmas \ref{clm3}, \ref{clm28},  \ref{clm29} and \ref{clm30}.

\section{ Proof for Theorems \ref{2}, \ref{2+} and \ref{2++}}
 When we write a set $A=\{a_1, a_2, \ldots, a_s\}\subset [n]$, we always assume that $a_1<a_2<\ldots<a_s$ throughout the paper.  Let us introduce the lexicographic (lex for short) order of subsets of positive integers. Let $A$ and $B$ be finite subsets of the set of positive integers $\mathbb{Z}_{>0}$. We say that $A\prec B$ if either $A\supset B$ or $\min(A\setminus B) < \min(B\setminus A)$. In particular, $A\prec A$. Let $\mathcal{L}([n], r, k)$ denote the first $r$ subsets in ${[n]\choose k}$ in the lex order. Given a set $R$, we denote $\mathcal{L}([n], R, k)=:\{F\in {[n]\choose k}: F\prec R\}$.  Let $\mathcal{F}\subset {[n]\choose k}$ be a family, we say $\mathcal{F}$ is {\it L-initial} if $\mathcal{F}=\mathcal{L}([n], |\mathcal{F}|, k)$.

The well-known Kruskal-Katona theorem \cite{KK1, KK2} will play an important role in our discussion, an equivalent formulation of which was given in \cite{KK3, KK4} as follows.

\begin{theorem}[Kruskal-Katona, \cite{KK1, KK2}]\label{3}
For $\mathcal{A}\subset {[n]\choose k}$ and $\mathcal{B}\subset {[n]\choose l}$, if $\mathcal{A}$ and $\mathcal{B}$ are cross-intersecting, then $\mathcal{L}([n], |\mathcal{A}|, k)$ and $\mathcal{L}([n], |\mathcal{B}|, l)$ are cross-intersecting as well.
\end{theorem}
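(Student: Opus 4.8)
The plan is to recognize the statement as the ``cross-intersecting reformulation'' of the classical Kruskal--Katona shadow theorem \cite{KK1, KK2} and to deduce it from the latter. We may assume $n\ge k+l$ and that $\mathcal A,\mathcal B$ are both non-empty, since otherwise every pair consisting of a $k$-uniform and an $l$-uniform family (in particular every pair of $L$-initial ones) is cross-intersecting and there is nothing to prove. Write $\overline{X}=[n]\setminus X$ for $X\subseteq[n]$ and $\overline{\mathcal G}=\{\overline{G}:G\in\mathcal G\}$. The equivalence $A\cap B=\emptyset\iff\overline{A}\supseteq B$ shows that, for a fixed $\mathcal B\subseteq{[n]\choose l}$, a family $\mathcal A\subseteq{[n]\choose k}$ is cross-intersecting with $\mathcal B$ if and only if $\mathcal A$ is disjoint from
$$
\mathcal T(\mathcal B):=\Big\{A\in{[n]\choose k}:A\cap B=\emptyset\ \text{for some}\ B\in\mathcal B\Big\}=\overline{\partial^{+}(\mathcal B)},
$$
where $\partial^{+}(\mathcal B):=\{S\in{[n]\choose n-k}:S\supseteq B\ \text{for some}\ B\in\mathcal B\}$ is the upper shadow of $\mathcal B$ raised to level $n-k$. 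Equivalently, the largest family cross-intersecting with $\mathcal B$ is exactly ${[n]\choose k}\setminus\mathcal T(\mathcal B)$, of size ${n\choose k}-|\partial^{+}(\mathcal B)|$.

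Next I would record two facts, both obtained from classical Kruskal--Katona by two size-preserving bijections: complementation within a fixed level, which \emph{reverses} the lex order (since $A\prec B\iff\overline{B}\prec\overline{A}$ for sets of equal size), and the ground-set relabelling $i\mapsto n+1-i$, which interchanges lex and colex order and preserves all shadow sizes. Fact~(a): among $l$-uniform families of a given size, an $L$-initial family minimizes $|\partial^{+}(\cdot)|$; this is the colex/lower-shadow minimality of Kruskal--Katona transported through these bijections, using that a final segment of the lex order becomes, after the relabelling, a colex-initial segment. Fact~(b): the upper shadow $\partial^{+}\big(\mathcal L([n],b,l)\big)$ of an $L$-initial family is again $L$-initial in ${[n]\choose n-k}$; this is the statement that the shadow of a colex-initial family is colex-initial, transported the same way. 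I would also note the companion observation that the elementwise complement $\overline{\mathcal H}$ of an $L$-initial family $\mathcal H\subseteq{[n]\choose n-k}$ is a \emph{final} segment of ${[n]\choose k}$ in the lex order, so ${[n]\choose k}\setminus\overline{\mathcal H}$ is again $L$-initial. (Both Facts are what is packaged in the ``equivalent formulation'' of \cite{KK3, KK4}; alternatively Fact~(b) has a self-contained proof by induction on $n$, splitting a family according to whether it contains the element $n$.)

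With these in hand the theorem follows. Let $\mathcal A,\mathcal B$ be cross-intersecting, $a=|\mathcal A|$, $b=|\mathcal B|$. By the first paragraph $a\le{n\choose k}-|\partial^{+}(\mathcal B)|$, and by Fact~(a) $|\partial^{+}(\mathcal L([n],b,l))|\le|\partial^{+}(\mathcal B)|$, so ${n\choose k}-|\partial^{+}(\mathcal L([n],b,l))|\ge a$. By Fact~(b), $\partial^{+}(\mathcal L([n],b,l))$ is $L$-initial in ${[n]\choose n-k}$, hence by the companion observation $\mathcal T(\mathcal L([n],b,l))=\overline{\partial^{+}(\mathcal L([n],b,l))}$ is a final segment of ${[n]\choose k}$; therefore $\mathcal C:={[n]\choose k}\setminus\mathcal T(\mathcal L([n],b,l))$ is $L$-initial with $|\mathcal C|\ge a$. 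Since $\mathcal L([n],a,k)$ is the unique $L$-initial $k$-uniform family of size $a$ and $|\mathcal C|\ge a$, we get $\mathcal L([n],a,k)\subseteq\mathcal C$, which by the reformulation of the first paragraph means precisely that $\mathcal L([n],|\mathcal A|,k)$ and $\mathcal L([n],|\mathcal B|,l)$ are cross-intersecting. The substantive input is the classical Kruskal--Katona theorem itself; granting that, the only genuine obstacle is the order-and-level bookkeeping in the two bijections above — the point where an off-by-one in a level index would destroy the inclusion $\mathcal L([n],a,k)\subseteq\mathcal C$ — and that is the step I would check most carefully.
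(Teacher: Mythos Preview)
The paper does not give its own proof of this theorem: it is quoted as a classical result of Kruskal and Katona, in the equivalent cross-intersecting formulation attributed to \cite{KK3, KK4}, and is used as a black box throughout. So there is no ``paper's proof'' to compare against.

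Your derivation is the standard one and is correct. The chain of bijections does exactly what you say: complementation in $[n]$ reverses the lex order on a fixed level, so $\overline{\mathcal L([n],b,l)}$ is the lex-\emph{final} segment of $\binom{[n]}{n-l}$; the relabelling $i\mapsto n+1-i$ then sends this lex-final segment to the colex-\emph{initial} segment of $\binom{[n]}{n-l}$, to which classical Kruskal--Katona applies (both the minimality and the ``shadow of colex-initial is colex-initial'' part). Transporting back gives your Facts~(a) and~(b), and the inclusion $\mathcal L([n],a,k)\subseteq\mathcal C$ follows. The one phrase I would tighten is ``interchanges lex and colex order'': the relabelling sends lex to \emph{reversed} colex (lex-initial $\leftrightarrow$ colex-final, lex-final $\leftrightarrow$ colex-initial). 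You already use the correct direction in your parenthetical, but stating it precisely up front removes the only place a reader could stumble in the bookkeeping you yourself flag as delicate.
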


\subsection{Sketch of the proof of Theorem \ref{2}}
We first consider the quantitative part of Theorem \ref{2}.  By Theorem \ref{3},   we may assume that $\mathcal{A}_i$ is L-initial, that is, $\mathcal{A}_i=\mathcal{L}([n], |\mathcal{A}_i|, k_i)$ for each $i\in [t]$.
In this section, we give an outline of the proof and leave the proofs of some crucial propositions and  lemmas to Subsection \ref{subprooflemma} and Section \ref{sec3}.
Recall that
\begin{equation}\label{m}
m_i=\min_{j\ne i}k_j.
\end{equation}
Note that $\mathcal{A}_1, \dots, \mathcal{A}_t$ are pairwise cross intersecting and L-initial, 
we have the following proposition.
\begin{proposition}\label{coro7}
$|\mathcal{A}_i|\leq{n-1\choose k_i-1}+\cdots+{n-m_i\choose k_i-1}$.
\end{proposition}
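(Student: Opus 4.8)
The plan is to read the right-hand side as a count of $k_i$-sets meeting an initial segment of $[n]$, and then to derive a contradiction from $L$-initiality together with the non-emptiness of one of the remaining families. First I would record the identity
\[
{n-1\choose k_i-1}+{n-2\choose k_i-1}+\cdots+{n-m\choose k_i-1}=\left|\left\{F\in{[n]\choose k_i}:F\cap[m]\ne\emptyset\right\}\right|,
\]
obtained by classifying each $k_i$-set $F$ meeting $[m]$ according to $s:=\min(F\cap[m])$: such an $F$ is determined by choosing its remaining $k_i-1$ elements from $\{s+1,\dots,n\}$, so there are exactly ${n-s\choose k_i-1}$ of them for each $s\in[m]$. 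Next I would observe that the $k_i$-sets meeting $[m]$ are precisely an \emph{initial segment} of the lex order on ${[n]\choose k_i}$: if $F\cap[m]\ne\emptyset$ and $G\cap[m]=\emptyset$, then writing $a:=\min(F\cap[m])\le m$ we get $a\in F\setminus G$ while $G\setminus F\subseteq\{m+1,\dots,n\}$, hence $\min(F\setminus G)\le m<\min(G\setminus F)$ and $F\ne G$, so $F$ strictly precedes $G$. Consequently the $k_i$-set immediately following this initial segment in lex order is $\{m+1,m+2,\dots,m+k_i\}$, and this is a legitimate subset of $[n]$: choosing (by definition of $m$) an index $j\ne i$ with $k_j=m$, we have $m+k_i=k_j+k_i\le k_1+k_2\le n$.

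With these observations in place I would argue by contradiction. Suppose $|\mathcal{A}_i|>{n-1\choose k_i-1}+{n-2\choose k_i-1}+\cdots+{n-m\choose k_i-1}$. Since $\mathcal{A}_i$ is $L$-initial, it consists of the first $|\mathcal{A}_i|$ sets in lex order, so it contains all the sets meeting $[m]$ together with at least one further $k_i$-set; as the next set in lex order after those meeting $[m]$ is $\{m+1,\dots,m+k_i\}$, we conclude $\{m+1,\dots,m+k_i\}\in\mathcal{A}_i$. On the other hand, with $j\ne i$ chosen as above so that $k_j=m$, the family $\mathcal{A}_j$ is non-empty and $L$-initial, hence contains the lex-smallest $k_j$-set, namely $[k_j]=[m]$. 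But $[m]\cap\{m+1,\dots,m+k_i\}=\emptyset$, contradicting the fact that $\mathcal{A}_i$ and $\mathcal{A}_j$ are cross-intersecting. This gives the desired bound.

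I do not anticipate a serious obstacle: the argument is essentially bookkeeping about the lex order, and the only delicate points are (a) verifying that the $k_i$-sets meeting $[m]$ form an initial segment of the lex order of exactly the stated size, so that the set sitting just past it is precisely $\{m+1,\dots,m+k_i\}$, and (b) checking $k_i+k_j\le n$ — the one place where the hypothesis $n\ge k_1+k_2$ enters — so that $\{m+1,\dots,m+k_i\}\subseteq[n]$ and the contradiction is non-vacuous. Note in particular that this argument does not use the standing assumption $|\mathcal{A}_i|\ge{n-1\choose k_i-1}$.
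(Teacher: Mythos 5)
Your proof is correct and reaches the same contradiction as the paper — a set in $\mathcal{A}_i$ disjoint from $[m]$ versus $[m]$ lying in the $m$-uniform family $\mathcal{A}_j$ — but it is slightly more direct: the paper first proves an auxiliary claim (its Claim 3.1) showing that \emph{every} member of each $\mathcal{A}_j$ with $j\ne i$ contains $[m]$, whereas you simply observe that the non-empty $L$-initial $m$-uniform family must contain its lex-smallest set $[m]$, which suffices. This counts as essentially the same approach, just streamlined.
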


One important ingredient of the proof is to bound   $\sum_{i=1}^t{|\mathcal{A}_i|}$ by a function of  the last element of $\mathcal{A}_i$. Let us list the set of the last elements of all possible $\mathcal{A}_i$.

Let $Z={n-2\choose k_i-1}+\cdots+{n-m_i\choose k_i-1}, R_0=\{1, n-k_i+2, n-k_i+3, \dots, n\}, R_1=\{2, 3, \cdots, k_i+1\}$, $R_Z=\{m_i, n-k_i+2, n-k_i+3, \dots, n\}$ and   $R_0\precneqq R_1\precneqq \cdots \precneqq R_Z$ in lex order with each $|R_j|=k_i$ for $j\in [Z]$.
We denote
\begin{align}  \label{eq-R}
\mathbb{R}_i=:\{R_0, R_1, \dots, R_Z\}.
\end{align}

By Proposition \ref{coro7}, we have ${n-1\choose k_i-1}\leq |\mathcal{A}_i|\leq {n-1\choose k_i-1}+Z$. Since $\mathcal{A}_i$ is L-initial, we have the following remark.

\begin{remark}\label{clm12}
Let $0\leq r\leq Z$. If $|\mathcal{A}_i|={n-1\choose k_i-1}+r$, then $\mathcal{A}_i=\mathcal{L}([n], R_r, k_i)$.
\end{remark}

 Let $R$  be the last element  of  $\mathcal{A}_i$ (we call $R$ the ID of $\mathcal{A}_i$), clearly $R\in \mathbb{R}_i$.
 We will bound   $\sum_{i=1}^t{|\mathcal{A}_i|}$ by a function of  $R$. In order to do this, we will extend a  result of  Frankl-Kupavskii (Proposition \ref{prop15}).

\begin{definition}
We say that $A$ and $B$ strongly intersect at their last element $q$ if $A\cap B=\{q\}$ and $A\cup B=[q]$. We also say $A$  is $B$'s  partner.
\end{definition}

We have the following observation.

\begin{remark}\label{coro11}
Let $k, l, n\in \mathbb{Z}_{>0}, n\geq k+l, R\subset [n], |R|=k$ and $\max R=n$. Let $p$ be the last element of $R$ not continuing to $n$ and $R'=R\cap [p]$. Let $T$ and $T'$ be the partners of $R$ and $R'$ respectively. Then $\mathcal{L}([n], R, k)=\mathcal{L}([n], R', k)$ and $\mathcal{L}([n], T, l)=\mathcal{L}([n], T', l)$.
\end{remark}


\begin{definition}
Let $t\geq 2$. We say that $\mathcal{F}_1\subset{[n]\choose l_1}, \mathcal{F}_2\subset{[n]\choose l_2}, \dots, \mathcal{F}_t\subset{[n]\choose l_t}$ are maximal pairwise cross-intersecting if whenever $\mathcal{F}'_1\subset{[n]\choose l_1}, \mathcal{F}'_2\subset{[n]\choose l_2}, \dots, \mathcal{F}'_t\subset{[n]\choose l_t}$ are pairwise cross-intersecting with $\mathcal{F}'_1\supset \mathcal{F}_1, \dots, \mathcal{F}'_t\supset \mathcal{F}_t$, then $\mathcal{F}_1=\mathcal{F}'_1, \dots, \mathcal{F}_t=\mathcal{F}'_t$.
\end{definition}

\begin{proposition}[Frankl-Kupavskii \cite{FK2018}]\label{prop15}
Let $a, b\in \mathbb{Z}_{>0}, a+b\leq n$. Let $P$ and $Q$ be non-empty subsets of $[n]$ with $|P|\leq a$ and $|Q|\leq b$. If $Q$ is the partner of $P$, then $\mathcal{L}([n], P, a)$ and $\mathcal{L}([n], Q, b)$ are maximal cross-intersecting families.
\end{proposition}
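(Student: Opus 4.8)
The plan is to establish two things separately: that the pair $\mathcal{L}([n],P,a)$, $\mathcal{L}([n],Q,b)$ is cross-intersecting, and that it is maximal among such pairs. For maximality it suffices, unwinding the definition, to show that every $F_0\in{[n]\choose a}$ with $F_0\notin\mathcal{L}([n],P,a)$ misses some member of $\mathcal{L}([n],Q,b)$, i.e.\ there is $G\in\mathcal{L}([n],Q,b)$ with $F_0\cap G=\emptyset$; the statement with $(P,a)$ and $(Q,b)$ interchanged is then symmetric, since $P$ is the partner of $Q$ exactly when $Q$ is the partner of $P$. Write $q$ for the common last element, $P=\{p_1<\cdots<p_{s-1}<q\}$ and $Q=\{q_1<\cdots<q_{r-1}<q\}$; the partner hypothesis says precisely that $\{p_1,\dots,p_{s-1}\}$ and $\{q_1,\dots,q_{r-1}\}$ partition $[q-1]$, so $s+r=q+1$ and $p_j\le q-s+j$ for each $j$.

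For the cross-intersecting part, fix $F\prec P$ and $G\prec Q$. If $F\supseteq P$ then $q\in F$; if also $q\in G$ we are done, and otherwise $q\in Q\setminus G$ forces $g:=\min(G\setminus Q)<q$, so $g\in[q-1]\setminus Q\subseteq P\subseteq F$ while $g\in G$, giving $g\in F\cap G$. The case $G\supseteq Q$ is symmetric. In the remaining case $\min(F\setminus P)<\min(P\setminus F)$ and $\min(G\setminus Q)<\min(Q\setminus G)$; putting $f=\min(F\setminus P)$ and $g=\min(G\setminus Q)$ one checks $f,g\le q-1$, so $f\in[q-1]\setminus P\subseteq Q$ and $g\in[q-1]\setminus Q\subseteq P$, hence $f\ne g$. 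If $f<g$, then $f$ lies in $F$ and, since every element of $Q$ below $\min(G\setminus Q)$ lies in $G$, also in $G$; if $g<f$, symmetrically $g\in F\cap G$. Note this half uses only the defining property of $\prec$ and the partition, not the cardinalities.

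For maximality, take $F_0\in{[n]\choose a}$ with $F_0\notin\mathcal{L}([n],P,a)$. Unwinding $\prec$, and using $|P|\le a$ to rule out $F_0\subseteq P$, gives that $P\setminus F_0$ and $F_0\setminus P$ are non-empty and $p^*:=\min(P\setminus F_0)<\min(F_0\setminus P)$; consequently $p^*\notin F_0$, $p^*\le q$, and $F_0\cap[p^*-1]=P\cap[p^*-1]$. I would now split into $p^*=q$ and $p^*<q$. If $p^*=q$, then $F_0\cap[q]=\{p_1,\dots,p_{s-1}\}$, so $F_0\cap Q=\emptyset$; take $G=Q\cup S$ with $S\subseteq[q+1,n]\setminus F_0$ chosen so that $|G|=b$. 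Then $G\supseteq Q$ yields $G\prec Q$, clearly $G\cap F_0=\emptyset$, the size $|S|=b-|Q|$ is non-negative since $|Q|\le b$, and $S$ fits into $[q+1,n]\setminus F_0$ precisely because $a+b\le n$. If $p^*<q$, say $p^*=p_j$, take $G=(Q\cap[p^*-1])\cup\{p^*\}\cup S$ with $S\subseteq[p^*+1,n]\setminus F_0$ chosen so that $|G|=b$. Here $p^*\notin Q$ is the first coordinate at which $G$ and $Q$ differ and $Q$ has a larger element there, so $G\prec Q$; $G\cap F_0=\emptyset$ because on $[p^*-1]$ the set $F_0$ agrees with $P$ and hence misses $Q$, and $p^*\notin F_0$; the size $|S|=b-(p^*-j+1)$ is non-negative because $p_j-j+1\le q-s+1=r\le b$, and $S$ fits into $[p^*+1,n]\setminus F_0$ exactly when $a+b\le n$.

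I expect the main difficulties, and where most of the real work lies, to be: (i) getting the direction of the lex order right and correctly isolating $p^*$ as the decisive coordinate of $F_0$; (ii) treating the boundary case $p^*=q$ on its own, where one pads $Q$ upward rather than inserting a small element; and (iii) verifying that the two cardinality constraints encountered when choosing the padding set $S$ both reduce to the single hypothesis $a+b\le n$, and that $S$ never needs negative size --- the latter resting on the inequality $p_j-j+1\le r$ forced by the partition $[q-1]=\{p_1,\dots,p_{s-1}\}\sqcup\{q_1,\dots,q_{r-1}\}$. Everything else is a routine if somewhat lengthy case check.
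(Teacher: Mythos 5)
The paper cites this proposition from Frankl--Kupavskii \cite{FK2018} without giving a proof, so there is no internal proof to compare against; your argument is a correct, self-contained proof. For context, the paper's proof of its extension, Proposition \ref{coro16} (which drops $|Q|\le b$), uses similar ingredients in the cross-intersecting half --- tracking $\min(F\setminus Q)$ and exploiting the partition of $[q-1]$ into $P\setminus\{q\}$ and $Q\setminus\{q\}$ --- but organizes maximality around the transition between the last set of the lex-initial family and the first set beyond it. You instead exhibit, for \emph{every} $F_0\notin\mathcal{L}([n],P,a)$, an explicit disjoint $G\in\mathcal{L}([n],Q,b)$, splitting on whether $p^*=\min(P\setminus F_0)$ equals $q$ or is smaller. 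That version makes each hypothesis do visible work: $|P|\le a$ rules out $F_0$ being a proper subset of $P$; $|Q|\le b$ (via $p_j-j+1\le r\le b$, forced by the partition) ensures the padding set $S$ has non-negative size; and $a+b\le n$ is exactly the room needed for $S$ to fit inside $[p^*+1,n]\setminus F_0$ in both cases. The reduction to one direction of maximality via the symmetry of the partner relation is sound: if $\mathcal{F}'_1\supseteq\mathcal{L}([n],P,a)$, $\mathcal{F}'_2\supseteq\mathcal{L}([n],Q,b)$ were cross-intersecting with some $F_0\in\mathcal{F}'_1\setminus\mathcal{L}([n],P,a)$, your witness $G$ lies in $\mathcal{L}([n],Q,b)\subseteq\mathcal{F}'_2$ yet misses $F_0$, a contradiction, and the other side is symmetric.
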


This result cannot be applied to our situation directly. We  get rid of the condition $|Q|\leq b$ in Proposition \ref{prop15} and showed the following result.

\begin{proposition}\label{coro16}
Let $a, b, n\in \mathbb{Z}_{>0}$ and $a+b\leq n$. For $P\subset [n]$ with $|P|\leq a$, let $Q$ be the partner of $P$. Then $\mathcal{L}([n], Q, b)$ is the maximum L-initial $b$-uniform family that is cross-intersecting to $\mathcal{L}([n], P, a)$. Moreover, $\mathcal{L}([n], Q, b)\ne\emptyset$ if and only if $\min P\leq b$.
\end{proposition}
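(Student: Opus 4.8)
The plan is to reduce the statement to the already-available Proposition \ref{prop15} by truncating $Q$ to its first $b$ elements (when $|Q|>b$) and by a direct argument when $|Q|\le b$. Write $P=\{p_1<p_2<\dots<p_s\}$ with $s\le a$, and let $q:=\max P=p_s$. By definition of ``partner'', $Q$ strongly intersects $P$ at $q$, i.e. $Q\cup P=[q]$ and $Q\cap P=\{q\}$; hence $Q=([q]\setminus P)\cup\{q\}$ and $|Q|=q-s+1$. First I would record the elementary fact that $\min Q\le b$ is equivalent to the condition $\min P\le b$: indeed $\min Q$ is the smallest element of $[q]$ not in $P\setminus\{q\}$, so $\min Q=1$ unless $1=p_1$, and more generally $\min Q\le b$ fails precisely when $[b]\subseteq P$, i.e. when $\min P>b$ is violated only when... — more cleanly, $\min Q > b \iff \{1,\dots,b\}\cap Q=\emptyset \iff \{1,\dots,b\}\subseteq P\setminus\{q\}$ (using $b<q$, which holds since $b\le n-a\le n-s<q$ would need care) $\iff \min\big([n]\setminus P\big)>b$. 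This gives the ``moreover'' clause, modulo checking the boundary case $q\le b$ separately, where $Q\subseteq[q]\subseteq[b]$ forces $\mathcal L([n],Q,b)$ to be non-empty and $\min P=p_1\le q\le b$.

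For the main assertion, split into two cases. If $|Q|\le b$, then Proposition \ref{prop15} applies directly (with the roles of $a,b$ as given): $\mathcal L([n],P,a)$ and $\mathcal L([n],Q,b)$ are maximal cross-intersecting, which in particular says $\mathcal L([n],Q,b)$ is the largest L-initial $b$-uniform family cross-intersecting $\mathcal L([n],P,a)$ — for if an L-initial $b$-family $\mathcal B\supsetneq\mathcal L([n],Q,b)$ were cross-intersecting to $\mathcal L([n],P,a)$, then $(\mathcal L([n],P,a),\mathcal B)$ would properly contain $(\mathcal L([n],P,a),\mathcal L([n],Q,b))$, contradicting maximality (note $\mathcal L([n],P,a)$ is unchanged, which is consistent with maximality of the pair). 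If $|Q|>b$, let $Q'$ be the set of the first $b$ elements of $Q$ in increasing order; I claim $Q'$ is still the partner of some set $P'$ with $|P'|\le a$ and $\min P'=\min P$, and that $\mathcal L([n],Q,b)=\mathcal L([n],Q',b)$. The second claim is immediate from the definition of the lex order restricted to $b$-sets: truncating a set that is too long to its first $b$ elements does not change which $b$-sets precede it (this is exactly the normalization used implicitly when one writes $\mathcal L([n],R,k)$ for $|R|\ne k$). For the first claim, set $q':=\max Q'$; then $Q'=[q']\setminus P'$ where $P'=[q']\setminus Q'$, and since $Q'$ consists of the smallest $b$ elements of $Q=([q]\setminus P)\cup\{q\}$, the complement $P'$ is a subset of $P\cup(\text{tail elements})$ whose size I must bound by $a$. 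Here $|P'|=q'-b$, and because $Q'$ is an initial segment of $[q]$ minus (the initial segment of) $P$, one gets $q'\le$ (the $(b)$-th element missing... ) — concretely $|P'| = |P\cap[q']| \le |P| = s\le a$, since every element of $P'$ lies in $P$ (an element of $[q']$ not in $Q'\subseteq Q$ must be in $P$). Thus $P'\subseteq P$, $|P'|\le a$, $Q'$ is the partner of $P'$, and $|Q'|=b\le b$, so Proposition \ref{prop15} applies to the pair $(P',Q')$: $\mathcal L([n],P',a)$ and $\mathcal L([n],Q',b)=\mathcal L([n],Q,b)$ are maximal cross-intersecting. It remains to upgrade ``cross-intersecting to $\mathcal L([n],P',a)$'' to ``cross-intersecting to $\mathcal L([n],P,a)$'': since $P'\subseteq P$ implies $P'\prec P$, we have $\mathcal L([n],P',a)\subseteq\mathcal L([n],P,a)$, so the latter is possibly larger; I then need to check that $\mathcal L([n],Q,b)$ is still cross-intersecting to all of $\mathcal L([n],P,a)$ and still maximal with this property. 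Cross-intersection: any $F\in\mathcal L([n],P,a)\setminus\mathcal L([n],P',a)$ satisfies $P'\preceq F\prec P$; using $Q'=[q']\setminus P'$ and a short lex-order computation one shows $F\cap Q'\ne\emptyset$ (this is the technical heart). Maximality: an L-initial $b$-family strictly larger than $\mathcal L([n],Q,b)$ has ID strictly past $Q'$ in lex order, and Proposition \ref{prop15}'s maximality for the pair $(P',Q')$ together with monotonicity forces it to fail cross-intersection already against $\mathcal L([n],P',a)\subseteq\mathcal L([n],P,a)$.

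The step I expect to be the main obstacle is precisely the last one in the $|Q|>b$ case: verifying that passing from the partner $Q$ of $P$ to its $b$-element truncation $Q'$ (equivalently, from $P$ to $P'\subseteq P$) does not destroy cross-intersection with the \emph{full} family $\mathcal L([n],P,a)$ — that is, showing every $F\in{[n]\choose a}$ with $F\prec P$ meets $Q'$. This requires a careful case analysis on $\min(F\triangle P)$ versus the structure of $P'$ and $q'$, and is the place where the hypothesis $a+b\le n$ (guaranteeing $q=\max P\le n$ and that $Q$, $Q'$ genuinely live inside $[n]$) gets used. Once that lex-order lemma is in hand, everything else is bookkeeping with the identities $Q=([q]\setminus P)\cup\{q\}$ and $|Q|=q-|P|+1$, plus the observation that $\mathcal L([n],R,b)$ only depends on the first $b$ elements of $R$.
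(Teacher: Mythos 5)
The proof collapses at the truncation step. You claim that with $Q'=\{q_1,\dots,q_b\}$ (the first $b$ elements of $Q$) one has $\mathcal{L}([n],Q,b)=\mathcal{L}([n],Q',b)$, but this is false: under the paper's lex convention (in which $A\supseteq B$ implies $A\prec B$, so a proper subset comes \emph{after}), take $Q=\{1,3,4,5\}$, $b=2$, $n=6$. Then $Q'=\{1,3\}$ and $\mathcal{L}([6],\{1,3\},2)=\{\{1,2\},\{1,3\}\}$, whereas $\mathcal{L}([6],Q,2)=\{\{1,2\}\}$, since $\{1,3\}\subsetneq Q$ forces $\{1,3\}\not\prec Q$. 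The correct normalization is the one in Remark \ref{coro10}: $y=\max\{q:q\in[Q_b]\setminus Q\}$ and $Q'=(Q\cap[y])\cup\{y\}$, which in this example gives $\{1,2\}$, not $\{1,3\}$. So your $Q'$ parametrizes the wrong L-initial family.

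Even setting that aside, the pair $(P',Q')$ you feed to Proposition \ref{prop15} is not a partner pair: you set $P'=[q']\setminus Q'$, which is disjoint from $Q'$, whereas partners must strongly intersect at their common maximum $q'$. The genuine partner of $Q'$ is $([q']\setminus Q')\cup\{q'\}$, and since $q'=q_b\in Q$ with $q_b<\max Q=\max P$ (as $|Q|>b$), we have $q'\notin P$, so that partner is \emph{not} a subset of $P$. You also have the inclusion backwards: $P'\subsetneq P$ gives $P\prec P'$ and hence $\mathcal{L}([n],P,a)\subseteq\mathcal{L}([n],P',a)$, the opposite of what you assert. And you explicitly flag the crucial ``upgrade'' to maximality against the full $\mathcal{L}([n],P,a)$ as unproved; indeed maximality against a larger family does not transfer to maximality against a smaller one, so this is a real gap, not bookkeeping. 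The paper's proof avoids the reduction entirely: it shows cross-intersection directly by comparing $z_1=\min(F\setminus Q)$ and $z_2=\min(P'\setminus P)$, and shows maximality by writing down the first $b$-set $G$ beyond the (correctly normalized) $\mathcal{L}([n],Q,b)$ and verifying $G\cap P=\emptyset$, hence $G$ misses $P_0$, the last element of $\mathcal{L}([n],P,a)$.
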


Before to prove the above proposition, we give the following observation.

\begin{remark}\label{coro10}
Let $k, n\in \mathbb{Z}_{>0}$ and $A=\{a_1, a_2, \dots, a_{|A|}\}\subset [n]$ with $|A|>k$. Let $j=\max\{q:q\in [a_k]\setminus A\}$ and $A'=(A\cap [j])\cup\{j\}$. Then $\mathcal{L}([n], A, k)=\mathcal{L}([n], A', k)$.
\end{remark}

\begin{proof}[Proof of Proposition \ref{coro16}]
Let $P_0$ be the last element of $\mathcal{L}([n], P, a)$. If $|P|=a$, then  $P_0=P$  and if $|P|<a$, then $P_0=P\cup [n-a+|P|+1, n]$.
So $\min P_0=\min P$. Then $[b]\cap P_0=\emptyset$ if and only if $\min P>b$, this implies that $\mathcal{L}([n], Q, b)\ne\emptyset$ if and only if $\min P\leq b$. As desired. So we may assume $\min P\leq b$.

By Proposition \ref{prop15}, we only need to consider the case that $|Q|>b$. We first show that $\mathcal{L}([n], Q, b)$ and $\mathcal{L}([n], P, a)$ are cross-intersecting. For any $F\in \mathcal{L}([n], Q, b)$, we have $\min F\setminus Q <\min Q\setminus F$. Let $z_1=\min F\setminus Q$. Then $z_1\in P$ since $P$ is $Q$'s partner and $z_1<\min Q\setminus F \leq \max Q=\max P$. This implies that $F\cap P_0\ne \emptyset$. Let $P'\precneqq P_0$ with $|P'|=a$. If $P\subseteq P'$, then $F\cap P'\ne \emptyset$ since $z_1\in F\cap P'$. So we may assume $P\not\subseteq P'$. This implies
 $\min P'\setminus P<\min P\setminus P'$. Let $z_2=\min P'\setminus P$, then $z_2\in Q$ since $Q$ is $P$'s partner and $z_2< \min P\setminus P'\leq \max P=\max Q$. If $z_2\in F$, then $F\cap P'\ne \emptyset$. Suppose $z_2\not\in F$. If $z_1\in P'$, then $F\cap P'\ne \emptyset$. So assume that $z_1\not\in P'$. Since $z_1\in P$, we get $z_2=\min P'\setminus P<\min P\setminus P'\leq z_1$. However, $z_2\in Q, z_2\not\in F$, so $z_2\geq \min Q\setminus F>\min F\setminus Q=z_1$, a contradiction. We have proved that $\mathcal{L}([n], P, a)$ and $\mathcal{L}([n], Q, b)$ are cross-intersecting.

Next we show that $\mathcal{L}([n], Q, b)$ is the maximal L-initial $b$-uniform family that is cross-intersecting to $\mathcal{L}([n], P, a)$.
Let $Q_b$ be the $b$-th element of $Q$. Since $\min P\leq b$, then $Q_b>b$ and $[Q_b]\setminus Q\ne \emptyset$. Let $y=\max \{q: q\in [Q_b]\setminus Q\}$ and $Q'=(Q\cap [y])\cup \{y\}$. Then $|Q'|\leq b$. By Remark \ref{coro10}, $\mathcal{L}([n], Q', b)=\mathcal{L}([n], Q, b)$.
Suppose that $\mathcal{G}$ is another $b$-uniform L-initial family cross-intersecting with $\mathcal{L}([n], P, a)$ and $|\mathcal{G}|>|\mathcal{L}([n], Q', b)|$. Then $\mathcal{G}\supsetneqq \mathcal{L}([n], Q', b)$. Let $H$ be the last set in $\mathcal{L}([n], Q', b)$ and $G$ be the first set in $\mathcal{G}\setminus \mathcal{L}([n], Q', b)$.
Clearly $y=\max Q'< n$.
Let $|Q'|=p$. We have the following two cases.

Case (i) $|Q'|=b$. In this case $H=Q'$. Then $G=(Q'\setminus\{y\})\cup\{y+1\}$. Since $y\not\in Q$ and $y< \max Q=\max P$, $y\in P$. By our  definition of $y$, $y+1\in Q$. And $y+1\not\in P$, otherwise $|Q|=b$, also a contradiction. However, by the definition of $Q'$, we have $Q'\cap P=\{y\}$, so $G\cap P=\emptyset$, therefore, $G\cap P_0=\emptyset$, a contradiction again.

Case (ii) $|Q'|<b$. In this case $H=Q'\cup\{n-b+p+1, \dots, n\}$ and $G=(Q'\setminus\{y\})\cup\{y+1, y+2, \dots, y+b-p+1\}.$ Moreover, by the definitions of $Q_b$ and $y$, we can see that $y+b-p+1=Q_b$ and $\{y+1, y+2, \dots, y+b-p+1\}\subset Q$. Since $Q$ is the partner of $P$ and $Q_b<\max Q=\max P$, $\{y+1, y+2, \dots, y+b-p+1\}\cap P=\emptyset$. Recall that  $Q'\cap P=\{y\}$, so $G\cap P=\emptyset$, therefore, $G\cap P_0=\emptyset$, a contradiction.
So we have shown that $\mathcal{L}([n], Q', b)$, the same as  $\mathcal{L}([n], Q, b)$ (see Remark \ref{coro10}) is the maximum $b$-uniform L-initial family that is cross-intersecting to $\mathcal{L}([n], P, a)$, as desired.
\end{proof}

We give a  formula to calculate the size of an $L-$initial family.
\begin{proposition}\label{prop9}
Let $k, l, n$ be positive integers. Let $A=\{a_1, a_2, \dots, a_{s_a}\}\subset[n]$ and $B=\{b_1, b_2, \dots, b_{s_b}\}$ be $A$'s partner. Then
\begin{align}\label{eq1}
&|\mathcal{L}([n], A, k)|={n-b_1\choose k-b_1}+{n-b_2\choose k-b_2+1}+\cdots+{n-b_{s_b}\choose k-b_{s_b}+s_b-1},\\\label{eq2}
&|\mathcal{L}([n], B, l)|={n-a_1\choose l-a_1}+{n-a_2\choose l-a_2+1}+\cdots+{n-a_{s_a}\choose l-a_{s_a}+s_a-1}.
\end{align}
\end{proposition}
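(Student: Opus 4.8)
## Proof proposal for Proposition \ref{prop9}

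The plan is to identify exactly which $k$-subsets of $[n]$ precede $A$ in the lex order and count them by stratifying according to the first place where they differ from $A$. Recall that $A=\{a_1<a_2<\dots<a_{s_a}\}$ and that $B=\{b_1<b_2<\dots<b_{s_b}\}$ is the partner of $A$, so $A\cap B=\{b_{s_b}\}=\{a_{s_a}\}$ (the common last element, call it $q$), $A\cup B=[q]$, and $[q]\setminus A=\{b_1,\dots,b_{s_b-1}\}$ while $[q]\setminus B=\{a_1,\dots,a_{s_a-1}\}$. The key structural observation is that $b_1<b_2<\dots$ are precisely the elements of $[q]$ missing from $A$; equivalently $b_j$ is the $j$-th "gap" of $A$ inside $[q]$. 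Dually $a_j$ is the $j$-th gap of $B$.

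First I would prove \eqref{eq1}. A set $F\in{[n]\choose k}$ satisfies $F\prec A$ iff $F\supseteq A$ or $\min(F\setminus A)<\min(A\setminus F)$. I would partition these $F$ according to the smallest element $b_j$ of $[q]$ (a gap of $A$) that lies in $F$, together with the requirement that $F$ agrees with $A$ below $b_j$. Concretely, for each $j\in[s_b]$ let $\mathcal{F}_j$ be the family of $k$-sets $F$ with $b_j\in F$ and $F\cap[b_j-1]=A\cap[b_j-1]$ (note $A\cap[b_j-1]=\{a_1,\dots\}$ consists of exactly the $b_j-1-(j-1)=b_j-j$ elements of $[b_j-1]$, since among $1,\dots,b_j-1$ the gaps of $A$ are exactly $b_1,\dots,b_{j-1}$). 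Any such $F$ is determined by choosing its remaining $k-(b_j-j)-1 = k-b_j+j-1$ elements freely from $\{b_j+1,\dots,n\}$, a set of size $n-b_j$; hence $|\mathcal{F}_j|={n-b_j\choose k-b_j+j-1}$. I then have to check three things: (a) each $F\in\mathcal{F}_j$ indeed satisfies $F\prec A$ — because $b_j\in F\setminus A$ while $A\setminus F$ contains no element $\le b_j$, so $\min(F\setminus A)=b_j<\min(A\setminus F)$; (b) the $\mathcal{F}_j$ are pairwise disjoint — if $F\in\mathcal{F}_j\cap\mathcal{F}_{j'}$ with $j<j'$ then $b_j\in F$ but $b_j\in A\cap[b_{j'}-1]$ forces... wait, $b_j\notin A$, contradiction with $F\cap[b_{j'}-1]=A\cap[b_{j'}-1]$; (c) every $F\prec A$ with $F\ne$ (sets $\supseteq A$) lies in some $\mathcal{F}_j$, and the sets $F\supseteq A$ are already included, since $A\supseteq A$ means taking $F\supseteq A$; I must handle the boundary case $F\supseteq A$ separately or absorb it. Actually the cleanest route: the $k$-sets $F$ with $F\cap[q]=A$ (i.e. $F\supseteq A$, using $|A|=s_a=k$? no — $A$ need not have size $k$). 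I should be careful: $|A|=s_a$ may differ from $k$. Let me instead just say $F\prec A$ means $F$ is lex-before-or-equal $A$, and stratify all such $F$ by $\min(F\triangle A)$ if nonempty, handling $F=A$ as the degenerate stratum; the count works out to the stated sum once one checks the stratum where $F$ first deviates by containing the gap $b_j$ contributes ${n-b_j\choose k-b_j+j-1}$ and the "never deviates below $q$" stratum is absorbed. The combinatorial identity to verify is exactly that these strata exhaust and partition $\{F:F\prec A\}$.

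For \eqref{eq2} I would apply \eqref{eq1} with the roles of $A$ and $B$ swapped: since $A$ is the partner of $B$ (the partner relation is symmetric), $|\mathcal{L}([n],B,l)|=\sum_{r=1}^{s_a}{n-a_r\choose l-a_r+r-1}$, which is the claimed formula. So the whole proposition reduces to \eqref{eq1} plus the symmetry of "partner," which is immediate from the definition ($A\cap B=\{q\}$, $A\cup B=[q]$ is symmetric in $A,B$).

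The main obstacle is purely bookkeeping: getting the indexing of the binomial coefficients right, in particular verifying that $A\cap[b_j-1]$ has exactly $b_j-j$ elements (equivalently that $b_j$, the $j$-th gap of $A$ in $[q]$, satisfies $b_j-j=|A\cap[b_j-1]|$), and that the strata $\mathcal{F}_j$ together with the "no deviation" part form a partition of $\{F\in{[n]\choose k}:F\prec A\}$. I expect no conceptual difficulty — it is the standard combinatorial-number-system / colex-counting argument adapted to the partner set — but one must be disciplined about the case $s_a\ne k$ and about whether "$\prec$" is strict, since the statement's convention is $A\prec A$, so $A$ itself is counted in $\mathcal{L}([n],A,k)$ when $|A|=k$ and must be excluded or counted consistently with the formula.
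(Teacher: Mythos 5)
Your proof is correct and takes essentially the same route as the paper: both stratify $\{F\in\binom{[n]}{k}:F\prec A\}$ by the first ``gap'' element of $A$ (equivalently, first element of $B$) contained in $F$, and count each stratum by choosing the free elements above that gap. The paper organizes the strata via the intervals $D_d$ between consecutive elements of $A$ and then reindexes the double sum to land on the $b_p$'s, whereas you index directly by $j\in[s_b]$, and you derive \eqref{eq2} from \eqref{eq1} by symmetry of the partner relation rather than repeating the argument; these are only cosmetic differences.
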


\begin{proof}
We give the proof of (\ref{eq1}) only,  since the proof of (\ref{eq2}) is similar to (\ref{eq1}).
W.l.o.g., assume $b_1=1$.  Let $D_1:=\{1, \dots, a_1-1\}, D_j:=\{a_{j-1}+1, \dots, a_j-1\}$ for $j\in [2, s_a-1]$, and $D_{s_a}:=\{a_{s_{a-1}}+1, \dots, a_{s_a}\}$. Let $B:=\sqcup_{j=1}^{s_a}D_j$.
If $k<s_a$, then
\begin{align*}
\mathcal{L}([n], A, k)
&=\{F\in {[n]\choose k}: F\prec A\} \\
&=\{F\in {[n]\choose k}: F\cap D_1\ne \emptyset\} \sqcup \{F\in {[n]\choose k}: F\cap D_1=\emptyset, a_1\in F, F\cap D_2\ne \emptyset\}  \\
&\quad \sqcup \cdots \sqcup \{F\in {[n]\choose k}: F\cap D_j=\emptyset, a_j\in F\,\,\text{for}\,\, j\in[k-1], F\cap D_k\ne \emptyset\}.
\end{align*}
If $k\geq s_a$, then
\begin{align*}
\mathcal{L}([n], A, k)
&=\{F\in {[n]\choose k}: F\cap D_1\ne \emptyset\} \sqcup \{F\in {[n]\choose k}: F\cap D_1=\emptyset, a_1\in F, F\cap D_2\ne \emptyset\}  \\
&\quad \sqcup \cdots \sqcup \{F\in {[n]\choose k}: F\cap D_j=\emptyset, a_j\in F \,\,\text{for}\,\, j\in[{s_a}-1], F\cap D_{s_a}\ne \emptyset\}  \\
&\quad \sqcup\{F\in {[n]\choose k}: F\cap [a_{s_a}]=A\}.
\end{align*}
Thus,
\begin{align}\label{eq3}
|\mathcal{L}([n], A, k)|
&=\sum_{d=1}^{s_a}\sum_{j\in D_d}{n-j\choose k-d} \\ \label{eq4}
&={n-b_1\choose k-b_1}+{n-b_2\choose k-b_2+1}+\cdots+{n-b_{s_b}\choose k-b_{s_b}+s_b-1},
\end{align}
as desired.
\end{proof}

Combining Proposition \ref{coro16} and Proposition \ref{prop9}, we can bound $\sum_{j=1}^t d_j|\mathcal{A}_j|$ based on the ID of $\mathcal{A}_i$ as follows.

\begin{corollary}
Let $k_1, \dots, k_t$ be positive integers, $d_1, \dots, d_t$ be positive constants and $n\geq k_i+k_j$ for all $j\in [t]\setminus \{i\}$. Suppose that $\mathcal{A}_1\subseteq {[n]\choose k_1}$, $\mathcal{A}_2\subseteq {[n]\choose k_2}$, $\dots$, $\mathcal{A}_t\subseteq {[n]\choose k_t}$ are L-initial pairwise cross-intersecting families. Let $R=\{a_1, a_2, \dots, a_{k_i}\}$ be the ID of $\mathcal{A}_i$ with $\{1, n-k_i+2, \dots, n\}\prec R \prec \{k_t, n-k_i+2, \dots, n\}$ and $T=\{b_1, b_2, \dots, b_{s_b}\}$ be the partner of $R$.  Then
\begin{align}\label{new}
\sum_{j=1}^td_j|\mathcal{A}_j|
&\leq d_i|\mathcal{L}([n], R, k_i)|+\sum_{j\ne i}d_j|\mathcal{L}([n], T, k_j)|\nonumber\\
&=d_i\left[{n-b_1\choose k_i-b_1}+{n-b_2\choose k_i-b_2+1}+\cdots+{n-b_{s_b}\choose k_i-b_{s_b}+s_b-1}\right] \nonumber\\
& \quad +\sum_{j\ne i}d_j\left[{n-a_1\choose k_j-a_1}+{n-a_2\choose k_j-a_2+1}+\cdots+{n-a_{k_i}\choose k_j-a_{k_i}+k_i-1}\right]\nonumber\\
&=:f_i(R).
\end{align}
\end{corollary}
Thus, to show Theorem \ref{2}, it is sufficient to show that
$$f_i(R)\leq \max \left\{d_i{n\choose k_i}-d_i{n-m_i\choose k_i}+\sum_{j\ne i}d_j{n-m_i\choose k_j-m_i}, \,\,\sum_{j=1}^td_j{n-1\choose k_j-1}\right\}.
$$

Note that
\begin{eqnarray}\label{f1}
&&f_i(\{1, n-k_i+2, n-k_i+3, \ldots, n\})=\sum_{j=1}^td_j{n-1\choose k_j-1}, {\rm \ and \ correspondingly,} \nonumber \\
&&\vert \mathcal{A}_j\vert= {n-1\choose k_j-1} {\rm \ for \ each \ } j\in [t]
\end{eqnarray}
 in view of (\ref{new}). 
 And in view of (\ref{m})
\begin{eqnarray}\label{fm}
f_i(\{m_i\}\cup [n-k_i+2, n])&=& d_i{n\choose k_i}-d_i{n-m_i\choose m_i}+\sum_{j\ne i}d_j{n-k_t\choose k_i-k_t} {\rm \ and \ correspondingly,} \nonumber \\
\vert\mathcal{A}_i\vert&=&{n\choose k_i}-{n-m_i\choose k_i},  {\rm \ and \ } \vert\mathcal{A}_j\vert={n-m_i\choose k_j-m_i} {\rm \ if \ } j\in [t]\setminus\{i\}
\end{eqnarray}
in view of (\ref{new}).

Hence,  to show Theorem \ref{2}, it is sufficient to show that $f_i(R)\le \max\{f_i(\{1, n-k_i+2, n-k_i+3, \ldots, n \}), f_i(\{m_i, n-k_i+2, n-k_i+3, \ldots, n \})\}$.
For this purpose, we will introduce the concept `$c$-sequential' and show some `local convexity' of  $f_i(R)$.

Let $\mathcal{A}\subset {[n]\choose k}$ be a family and $c\in [k]$. We say that $\mathcal{A}$ is $c$-{\it sequential} if there are $A\subset [n]$ with $|A|=k-c$ and $a\geq \max A$ (For a set $A\subset [n]$,  denote $\max A=\max \{a: a\in A\}$ and $\min A=\min \{a: a\in A\}$) such that $\mathcal{A}=\{A\sqcup \{a+1, \dots, a+c\}, A\sqcup \{a+2, \dots, a+c+1\}, \dots, A\sqcup \{b-c+1, \dots, b\}\}$, then we say that $A$ is the head of $\mathcal{A}$ and $\mathcal{A}$ is $c$-sequential from $a+c$ to $b$, write $A_1\overset{c}{\prec} A_2\overset{c}{\prec}\cdots\overset{c}{\prec}A_{b-a-c+1}$, where $A_1=A\sqcup \{a+1, \dots, a+c\}, A_{b-a-c+1}=A\sqcup \{b-c+1, \dots, b\}$.
For any $A_i$,  $A_j$ contained in $\mathcal{A}$, we also say $A_i$, $ A_j$ are $c$-sequential.
 In particular, if $l_2=l_1+1$, we write $A_{l_1}\overset{c}{\prec}  A_{l_2}$; if $\max A_{l_2}=n$, write $A_{l_1}\overset{c}{\longrightarrow}  A_{l_2}$. Note that if $|\mathcal{A}|=1$, then $\mathcal{A}$ is $c$-sequential for any $c\in [k]$. Let $\mathcal{F}$ be a family and $F_1, F_2\in \mathcal{F}$. If $F_1\precneqq F_2$ and there is no $F'\in \mathcal{F}$ such that $F_1\precneqq F' \precneqq F_2$, then we say $F_1<F_2$ in $\mathcal{F}$, or $F_1<F_2$ simply if there is no confusion.

Let $R$ and $R'$  satisfy $R\prec R'$ with the corresponding
partners $T$ and $T'$ respectively. In order to measure $f_i(R')-f_i(R)$, we  define
\begin{align}\label{eq6}
&\alpha(R, R'):=d_i|\mathcal{L}([n], R', k_i)|-d_i|\mathcal{L}([n], R, k_i)|,\\ \label{eq7}
&\beta(R, R'):=\sum_{j\ne i}d_j(|\mathcal{L}([n], T, k_j)|-|\mathcal{L}([n], T', k_j)|).
\end{align}
Consequently,
\begin{align*}
f_i(R')-f_i(R)=\alpha(R, R')-\beta(R, R').
\end{align*}

\begin{proposition}\label{clm23}
Let $F<G\in \mathbb{R}_i$ and $\max G=q$. Then \\
(i) $\alpha(F, G)=d_i$.\\
(ii) $\beta(F, G)=\sum_{j\ne i}d_j{n-q\choose k_j-(q-k_i)}$.\\
 (iii) If $n= k_i+k_j$ holds for any $j\in [t]\setminus \{i\}$, then $\beta(F, G)=\sum_{j\ne i}d_j$; otherwise, we have $\beta(F, G)\geq0$ and $\beta(F, G)$ decrease as $q$ increase.
\end{proposition}

\begin{proof}
By the definition of $\alpha(F, G)$, we can get (i).  If (ii) holds,  by a direct calculation, we can get (iii). So we next to confirm (ii).
Let $F'$ and  $G'$ be the partners of $F$ and $G$ respectively. We have the following two cases.

Case (i) $\max F<n$. In this case, $\max F=q-1$ and $F\setminus \{q-1\}=G\setminus \{q\}$. By (\ref{eq7}) and Proposition \ref{prop9}, we have
\begin{align*}
\beta(F, G)
&=\sum_{j\ne i}d_j(|\mathcal{L}([n], F', k_j)|-|\mathcal{L}([n], G', k_j)|)\\
&=\sum_{j\ne i}d_j\left[{n-(q-1)\choose k_j-(q-k_i)}-{n-q\choose k_j-(q-k_i+1)}\right]\\
&=\sum_{j\ne i}d_j{n-q\choose k_j-(q-k_i)},
\end{align*}
as desired.

Case (ii) $\max F=n$. Let $p$ be the last element of $F$ not continuing to $n$. Then $G=(F\cap [p-1])\cup \{p+1, p+2, \dots, q\}$. Let $\widetilde{F}=F\cap [p]$ and $\widetilde{F'}$ be the partner of $\widetilde{F}$. It follows from Remark \ref{coro11} that
$$\sum_{j\ne i}|\mathcal{L}([n], F', k_j)|=\sum_{j\ne i}|\mathcal{L}([n], \widetilde{F'}, k_j)|.$$
Therefore,
\begin{align*}
\beta(F, G)
&=\sum_{j\ne i}d_j(|\mathcal{L}([n], F', k_j)|-|\mathcal{L}([n], G', k_j)|)\\
&=\sum_{j\ne i}d_j(|\mathcal{L}([n], \widetilde{F'}, k_j)|-|\mathcal{L}([n], G', k_j)|)\\
&=\sum_{j\ne i}d_j\left\{ {n-p\choose k_j-p+(k_i-q+p)}\right.-\left[ {n-(p+1)\choose k_j-(p+1)+(k_i-q+p)}\right.\\
& \quad \left.\left.+{n-(p+2)\choose k_j-(p+1)+(k_i-q+p)}+\cdots+{n-q\choose k_j-(p+1)+(k_i-q+p)}\right]  \right\}\\
&=\sum_{j\ne i}d_j\left\{{n-p\choose k_j-(q-k_i)}-\left[ {n-p\choose k_j-(q-k_i)}-{n-q\choose k_j-(q-k_i)}\right]\right\}\\
&=\sum_{j\ne i}d_j{n-q\choose k_j-(q-k_i)},
\end{align*}
as desired.
\end{proof}

By Proposition \ref{clm23}, we can see the following two claims.

\begin{remark}\label{remark2.12}
Suppose that $n=k_i+k_j$ holds for all $j\in [t]\setminus \{i\}$ and $\sum_{j\ne i}d_j>d_i$. Then $\sum_{j=1}^td_j|\mathcal{A}_j|\leq \sum_{j=1}^td_j {n-1\choose k_j-1}$.
The equality holds if and only if $\mathcal{A}_j=\{A: 1\in A, |A|=k_j\}$ and for all $j\in [t]$.
\end{remark}

\begin{remark}\label{remark2.13}
Suppose that $n=k_i+k_j$ holds for all $j\in [t]\setminus \{i\}$ and $\sum_{j\ne i}d_j=d_i$. Then $\sum_{j=1}^td_j|\mathcal{A}_j|\leq \sum_{j=1}^td_j {n-1\choose k_j-1}$.
Let $F\in \mathbb{R}_i$ and $T$ be the partner of $F$.
The equality holds for $\mathcal{A}_i=\mathcal{L}([n], F, k_i)$ and $\mathcal{A}_j=\mathcal{L}([n], T, k_j)$ for each $j\in [t]\setminus \{i\}$.
\end{remark}

From the above two claims, we may assume that if $n=k_i+k_j$ holds for all $j\in [t]\setminus \{i\}$, then  $\sum_{j\ne i}d_j<d_i$.

We will prove the following four crucial lemmas showing some `local modularity' of  -$f_i(R)$ in Section \ref{sec3}.

\begin{lemma}\label{clm3}
Let $c\in [k_i]$ and $F, G, H\in \mathbb{R}_i$ with $F\overset{c}{\prec} G\overset{c}{\prec} H$. Assume that  if $n=k_i+k_j$ holds for all $j\in [t]\setminus \{i\}$, then  $\sum_{j\ne i}d_j<d_i$. If $\alpha(F, G)\geq\beta(F, G)$, then $\alpha(G, H)>\beta(G, H)$. This means that $f_i(G)\geq f_i(F)$ implies $f_i(H)>f_i(G)$.
\end{lemma}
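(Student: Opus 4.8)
\textbf{Proof plan for Lemma \ref{clm3}.}

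The plan is to compute $\alpha$ and $\beta$ explicitly along the $c$-sequential chain $F\overset{c}{\prec} G\overset{c}{\prec} H$ and compare the incremental gains. Write $F=A\sqcup\{a+1,\dots,a+c\}$, $G=A\sqcup\{a+2,\dots,a+c+1\}$, $H=A\sqcup\{a+3,\dots,a+c+2\}$ with $A$ the common head of size $k_i-c$ and $a\geq\max A$. First I would use Proposition \ref{prop9} (applied to $R$ with its partner $T$) to get a closed form for $|\mathcal{L}([n],R,k_i)|$ and for each $|\mathcal{L}([n],T,k_j)|$, and then specialize to $R\in\{F,G,H\}$. Since passing from $F$ to $G$ only changes the block $\{a+1,\dots,a+c\}$ to $\{a+2,\dots,a+c+1\}$, the partner changes in a controlled way: consecutive elements of the partner get shifted, so both $\alpha(F,G)$ and $\beta(F,G)$ should reduce to single binomial coefficients (or short sums of binomials) depending only on $n$, the $k_j$'s, $a$, and $|A|$. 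The key structural point is that $\alpha(G,H)/\alpha(F,G)$ and $\beta(G,H)/\beta(F,G)$ behave monotonically: as we move the block one step to the right, $\alpha$ (a difference in the $k_i$-uniform count) \emph{grows} while $\beta$ (a difference in the lower-uniform counts, summed over $j\neq i$) \emph{shrinks}, because binomial coefficients $\binom{n-x}{k-x+\text{const}}$ are monotone in $x$ in opposite directions for the two roles.

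Concretely, I expect to show two inequalities: $\alpha(G,H)\geq\alpha(F,G)$ and $\beta(G,H)\leq\beta(F,G)$, at least one of them strict under the hypothesis $n>k_1+k_2$ or $t>2$. Granting these, the chain of implications is immediate: if $\alpha(F,G)\geq\beta(F,G)$ then
\begin{align*}
\alpha(G,H)\geq\alpha(F,G)\geq\beta(F,G)\geq\beta(G,H),
\end{align*}
and strictness somewhere upgrades this to $\alpha(G,H)>\beta(G,H)$, which is exactly $f_i(H)>f_i(G)$; likewise $\alpha(F,G)\geq\beta(F,G)$ together with $f_i(G)-f_i(F)=\alpha(F,G)-\beta(F,G)$ gives $f_i(G)\geq f_i(F)$, and $f_i(H)-f_i(G)=\alpha(G,H)-\beta(G,H)>0$ gives the conclusion. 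The monotonicity of $\alpha$ is a statement about a single shifted binomial coefficient and should be a one-line consequence of Pascal's rule plus $n\geq k_1+k_2$ (which guarantees the relevant upper index stays large enough that $\binom{n-x-1}{\cdot}$ does not vanish prematurely). The monotonicity of $\beta$ is similar but summed over $j\neq i$, using $k_j\leq k_i$ for the relevant $j$.

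The main obstacle I anticipate is bookkeeping the partner $T$ as the block slides: when $a+c+1$ or later coordinates reach the ``tail region'' near $n$, the partner $T$ can change its cardinality $s_b$, and the formula in Proposition \ref{prop9} has a variable number of terms. I would handle this by a case split according to whether the shifted block still lies strictly below $n-k_i+2$ (the generic case, where $T$ and $T'$ have the same size and only one coordinate of the partner moves) or whether we are in the boundary regime. In the generic case the computation is clean; in the boundary case I expect the differences $\alpha,\beta$ to be even more lopsided (the $\beta$ side essentially collapses), so the inequality only gets easier — but I need to verify that $F,G,H\in\mathcal{R}$ forces enough room that no degenerate vanishing breaks the argument. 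A secondary subtlety is making sure the strictness is located correctly: if $t=2$ we must have $n>k_1+k_2$ and I would trace where that strict inequality enters (it should make the single binomial in $\alpha(G,H)$ strictly exceed the corresponding one in $\alpha(F,G)$, or equivalently keep a $\binom{n-x-1}{k_1-x}$ term positive); if $t>2$ the extra summands in $\beta$, or the freedom in the $k_j$'s, supply the strictness even when $n=k_1+k_2$.
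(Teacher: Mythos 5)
Your proposal hinges on the two claimed monotonicities $\alpha(G,H)\geq\alpha(F,G)$ and $\beta(G,H)\leq\beta(F,G)$, from which the conclusion would indeed follow trivially. The second is true, but the first is \emph{false} for $c\geq 2$. Take $k_i=2$, $c=2$, $A=\emptyset$, so $F=\{a+1,a+2\}$, $G=\{a+2,a+3\}$, $H=\{a+3,a+4\}$. Counting lex-initial $2$-sets directly gives $\alpha(F,G)=n-a-1$ and $\alpha(G,H)=n-a-2$, so $\alpha$ \emph{decreases}. More generally, the paper's Claim \ref{clm24} exhibits the exact relation $\alpha(F,G)=\alpha(F,F_1)+\alpha(G,H)$ (where $F\overset{c-1}{\prec}F_1$), which forces $\alpha(G,H)<\alpha(F,G)$ for every $c\geq 2$. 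So both $\alpha$ and $\beta$ shrink along a $c$-sequential chain, and your chain of inequalities
\[
\alpha(G,H)\geq\alpha(F,G)\geq\beta(F,G)\geq\beta(G,H)
\]
breaks at the first link.

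The real content of the lemma is that the gap $\alpha-\beta$ improves even though both quantities decrease. Claim \ref{clm24} gives
\[
\bigl[\alpha(G,H)-\beta(G,H)\bigr]-\bigl[\alpha(F,G)-\beta(F,G)\bigr]
= -\bigl[\alpha(F,F_1)-\beta(F,F_1)\bigr]+\sum_{j\ne i}\binom{n-(q+2)}{k_j-(q-k_i+1)},
\]
and the difficulty is controlling $\alpha(F,F_1)-\beta(F,F_1)$, which involves a $(c-1)$-sequential step. This is why the paper argues by induction on $c$ and has to introduce the nested sequence $c_1>c_2>\cdots>c_h$ and the ``down-up family'' bookkeeping to locate the turning point of $f_i$ along each lower-level sequential chain. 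The $c=1$ base case is handled differently from what you expect: there the hypothesis $\alpha(F,G)\geq\beta(F,G)$ is simply never satisfiable once $\max G<n$ (since $\alpha=1<\beta$), so the implication is vacuous, and the $\max G$ large regime is handled separately by Claim \ref{clm26.2}. In short, the direction of the $\alpha$-monotonicity is reversed from what you assumed, and the argument needed to bridge that gap is the paper's entire induction-on-$c$ mechanism, not a one-line binomial comparison.
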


Denote $\mathcal{R}_{i, k}=:\{R\in\mathbb{R}_i: [n-k+1, n]\subset R\}$, and
$\mathcal{R}_i(k)=:\{R\setminus [n-k+1, n]: R\in\mathcal{R}_{i, k}\}$ for $k\in [k_i-1]$. In addition, we will write $\mathcal{R}_i(0)=\mathbb{R}_i$. When we consider $f_i(R), \alpha(R, T)$ and $\beta(R, T)$ for $R, T\in \mathcal{R}_{i, k}$, we simply write $f_i(R\setminus[n-k+1, n])$ etc. In particular,   $f_i(\{1\})$ is indeed $f_i(\{1, n-k_i+1, n-k_i+2, \dots, n\})$, and $f_i(\{m_i\})$ is indeed $f_i(\{m_i, n-k_i+1, n-k_i+2, \dots, n\})$.

\begin{lemma}\label{clm28}
For any $j\in [0, k_i-1]$, let $1\leq c\leq k_i-j$ and $F, G, H\in \mathcal{R}_i(j)$ with $F\overset{c}{\prec}G\overset{c}{\prec}H$. Assume that  if $n=k_i+k_j$ holds for all $j\in [t]\setminus \{i\}$, then  $\sum_{j\ne i}d_j<d_i$.
  If $\alpha(F, G)\geq\beta(F, G)$, then $\alpha(G, H)>\beta(G, H)$. This means that $f_i(G)\geq f_i(F)$ implies $f_i(H)>f_i(G)$.
\end{lemma}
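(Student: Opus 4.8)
The statement of Lemma~\ref{clm28} is precisely that of Lemma~\ref{clm3} with the value $j=0$ replaced by an arbitrary $j\in[0,k_i-1]$, so the plan is to run the proof of Lemma~\ref{clm3} essentially verbatim; the only new feature is that all the sets involved now carry a common top block $[n-j+1,n]$ which must be transported through the computation. Concretely, I would fix a $c$-sequential triple $F\overset{c}{\prec}G\overset{c}{\prec}H$ in $\mathcal{R}(j)$ and write $F=A\sqcup\{x+1,\dots,x+c\}$, $G=A\sqcup\{x+2,\dots,x+c+1\}$, $H=A\sqcup\{x+3,\dots,x+c+2\}$ with $A\subseteq[n-j]$, $|A|=k_i-j-c$ and $x\ge\max A$; by the stated convention the quantities $f_i(F),\alpha(F,G),\beta(F,G)$ etc.\ are read off from the lifted $k_i$-subsets $\widehat F:=F\sqcup[n-j+1,n]$, $\widehat G$, $\widehat H$ of $[n]$, whose partners are $\widehat T_F=([n-j]\setminus F)\cup\{n\}$, $\widehat T_G$, $\widehat T_H$.

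The first step is a bookkeeping observation: when Proposition~\ref{prop9} is used to expand $|\mathcal{L}([n],\widehat F,k_i)|$ (through the partner $\widehat T_F$) and $|\mathcal{L}([n],\widehat T_F,k_l)|$ (through the partner $\widehat F$), the terms coming from the block $[n-j+1,n]$ contribute quantities depending only on $n,j,k_i$, resp.\ on $n,j,k_i,k_l$, and not on $F$; moreover the head $A$ occupies identical positions in $\widehat T_F,\widehat T_G,\widehat T_H$ (resp.\ in $\widehat F,\widehat G,\widehat H$), so its contribution also cancels in every difference. Hence $\alpha(F,G),\beta(F,G),\alpha(G,H),\beta(G,H)$ become explicit sums of binomial coefficients in the variables $n,x,c,j,\{k_l\}$, with the ``$G\to H$'' pair obtained from the ``$F\to G$'' pair simply by the substitution $x\mapsto x+1$. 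Applying Pascal's identity I would then write $\alpha(G,H)=\alpha(F,G)-\delta_\alpha$ and $\beta(G,H)=\beta(F,G)-\delta_\beta$ with explicit nonnegative $\delta_\alpha,\delta_\beta$, so that $\alpha(G,H)-\beta(G,H)=\bigl(\alpha(F,G)-\beta(F,G)\bigr)+(\delta_\beta-\delta_\alpha)$ and, exactly as for $j=0$, the desired implication reduces to the strict inequality $\delta_\beta>\delta_\alpha$.

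To finish, I would prove $\delta_\beta>\delta_\alpha$. Here $\delta_\beta$ is a sum of $t-1$ positive contributions, one per family $l\ne i$, each bounded below using $k_l\ge m$, whereas $\delta_\alpha$ is (up to lower-order binomials created by the block) a single binomial coefficient; the hypothesis ``$n>k_1+k_2$ or $t>2$'' is used precisely to rule out the unique borderline configuration where equality could occur: if $t>2$ there are at least two positive summands in $\delta_\beta$, and if $t=2$ one exploits the strict slack $n>k_1+k_2$. Since inserting the fixed block $[n-j+1,n]$ only narrows the range available to the moving block $\{x+1,\dots,x+c\}$ (it now lives inside $[n-j]$), the estimate needed for general $j$ is no harder than, and is carried out in the same way as, the one for $j=0$. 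I would also separately dispatch the (routine but numerous) boundary cases --- $A=\emptyset$, $c=k_i-j$, and the moving block abutting $[n-j+1,n]$ (so that $[n-j]\setminus H$ has no ``upper'' part) --- in which several of the binomial coefficients degenerate.

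The step I expect to be the main obstacle is the verification of $\delta_\beta>\delta_\alpha$ together with the attendant binomial-coefficient algebra: one must confirm that, after carrying the block $[n-j+1,n]$ through Proposition~\ref{prop9}, the four expressions really are related by the shift $x\mapsto x+1$, that Pascal's identity collapses the comparison to a single clean inequality, and that this inequality genuinely holds under the stated hypothesis in every boundary configuration. The ideas are all present in the $j=0$ case, but the presence of the fixed block multiplies the bookkeeping and the number of degenerate sub-cases to be checked.
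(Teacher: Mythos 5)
Your proposal rests on a mischaracterization of how Lemma~\ref{clm3} is actually proved, and as a consequence the ``run the $j=0$ proof verbatim'' plan does not go through. You assert that, ``exactly as for $j=0$,'' Pascal's identity yields explicit $\delta_\alpha,\delta_\beta$ with $\alpha(G,H)=\alpha(F,G)-\delta_\alpha$, $\beta(G,H)=\beta(F,G)-\delta_\beta$, and that the whole implication reduces to the unconditional strict inequality $\delta_\beta>\delta_\alpha$. But that is not the structure of the paper's proof of Lemma~\ref{clm3}. If $\delta_\beta>\delta_\alpha$ held across the board, one would in fact obtain the stronger statement $f_i(H)-f_i(G)>f_i(G)-f_i(F)$ for all $c$-sequential triples, which would make the hypothesis $\alpha(F,G)\ge\beta(F,G)$ superfluous and render the paper's elaborate argument pointless. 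What the paper actually does (Claims~\ref{clm26.3}--\ref{clm26.12}) is an induction on $c$: for $c\ge 2$ it uses Claim~\ref{clm24} to write $\delta_\alpha=\alpha(F,F_1)$ and $\delta_\beta=\beta(F,F_1)+\sum_{j\ne i}\binom{n-(q+2)}{k_j-(q-k_i+1)}$ where $F\overset{c-1}{\prec}F_1$, and the difference $\alpha(F,F_1)-\beta(F,F_1)$ can a priori be large and positive; controlling it requires constructing a strictly decreasing chain $c>c_1>c_2>\cdots>c_h$, the associated ``down degrees'' $t_1,\dots,t_h$, and the nested auxiliary families $F_p^+$, $J_p$, $J_p^{(k)}$. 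Your proposal neither acknowledges this machinery nor supplies a substitute for it; the ``verification that $\delta_\beta>\delta_\alpha$'' which you flag as the main obstacle is, in effect, the entire lemma, and you offer only the heuristic that $\delta_\beta$ has $t-1$ summands while $\delta_\alpha$ is ``a single binomial coefficient'' (it is in general $\alpha(F,F_1)$, not a single binomial coefficient, and it grows with $c$).

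There is also a structural mismatch with the paper's proof of Lemma~\ref{clm28} itself. Rather than re-running the Lemma~\ref{clm3} argument inside $\mathcal{R}(j)$, the paper proves Lemma~\ref{clm28} by induction on $j$, taking Lemma~\ref{clm3} as the base case $j=0$: given $F\overset{c}{\prec}G\overset{c}{\prec}H$ in $\mathcal{R}(j+1)$ with $\alpha(F,G)\ge\beta(F,G)$, it lifts to $F'=F\sqcup\{\max F+1\}$ etc.\ in $\mathcal{R}(j)$, where $F'\overset{c+1}{\prec}G'\overset{c+1}{\prec}H'$, then uses the translation facts Claims~\ref{clm27}, \ref{0000} and Corollary~\ref{000} to move the hypothesis down to a $c$-sequential triple $F_1\overset{c}{\prec}G_1\overset{c}{\longrightarrow}\cdots$ in $\mathcal{R}(j)$ (Claim~\ref{00000}, proved by contradiction using the inductive hypothesis), and finally pushes the resulting inequality back up. This is a genuinely different and much shorter route than re-deriving the $c$-induction in each $\mathcal{R}(j)$. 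If you insist on the direct approach, you would need to re-establish the entire infrastructure of Subsection~3.1 (Claims~\ref{clm26.1}--\ref{clm26.12}) inside $\mathcal{R}(j)$, not merely ``transport a top block through Proposition~\ref{prop9}''; the paper's inductive reduction on $j$ avoids exactly that duplication.
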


\begin{lemma}\label{clm29}
Suppose $k_i\geq 2$. Let $3\leq j\leq k_i+1$. Assume that  if $n=k_i+k_j$ holds for all $j\in [t]\setminus \{i\}$, then  $\sum_{j\ne i}d_j<d_i$. If $f_i(\{2, 3, \dots, j\})f_i(\{2, 3, \dots, j-1\})\geq f_i(\{2, 3, \dots, j\})$, then $f_i(\{2, 3, \dots, j-2\})>f_i(\{2, 3, \dots, j-1\})$.
\end{lemma}

\begin{lemma}\label{clm30}
Let $m_i+1\leq j\leq m_i+k_i-1$. Assume that  if $n=k_i+k_j$ holds for all $j\in [t]\setminus \{i\}$, then  $\sum_{j\ne i}d_j<d_i$. If $f_i(\{m_i, m_i+1, \dots, j-1\})\geq f_i(\{m_i, m_i+1, \dots, j\})$, then $f_i(\{m_i, m_i+1, \dots, j-2\})>f_i(\{m_i, m_i+1, \dots, j-1\})$.
\end{lemma}


Combining these four lemmas, we will be able to prove Theorem \ref{2}. Let us be precise below.

For a family $\mathcal{F}$, denote $f(\mathcal{F})=\max \{f(F): F\in \mathcal{F}\}$. Applying  Lemma \ref{clm3} repeatedly, we have
\begin{equation}\label{frr}
f_i(\mathbb{R}_i)=\max \{f_i(\{2, 3, \dots, k_i+1\}), f_i(\{m_i, m_i+1, \dots, m_i+k_i-1\}), f_i(\mathcal{R}_i(1))\}.
\end{equation}
(Let us explain the above observation. For example, suppose that $k_i=3, a< b <c \in [n]$ and $\{a, b, c\}\in \mathbb{R}_i$. Applying Lemma \ref{clm3}, we have
\begin{align*}
f_i(\{a, b, c\})&\leq \max \{f_i(\{a, b, b+1\}, f(_i\{a, b, n\})\}\\
&= \max \{f_i(\{a, b, b+1\}, f_i(\mathcal{R}_i(1))\}\\
&= \max\{ f_i(\{a, a+1, a+2\}), f_i(\mathcal{R}_i(1))\}\\
&= \max \{ f_i(\{2, 3, 4\}), f_i(\{m_i, m_i+1, m_i+2\}), f_i(\mathcal{R}_i(1))\}.)
\end{align*}
Similarly, applying Lemma \ref{clm28} repeatedly, we have
\begin{align}\nonumber
&f_i(\mathcal{R}_i(1))=\max \{f_i(\{2, 3, \dots, k_i\}), f_i(\{m_i, m_i+1, \dots, m_i+k_i-2\}), f_i(\mathcal{R}_i(2))\},\\ \nonumber
&f_i(\mathcal{R}_i(2))=\max \{f_i(\{2, 3, \dots, k_i-1\}), f_i(\{m_i, m_i+1, \dots, m_i+k_i-3\}), f_i(\mathcal{R}_i(3))\},\\\nonumber
&\quad \vdots\\ \label{fr}
&f_i(\mathcal{R}_i(k_i-1))=\max \{f_i(\{1\}),  f_i(\{m_i\})\}.
\end{align}
By Lemma \ref{clm29}, we have
\begin{align}\label{dd}
&\max\{f_i(\{2, 3, \dots, k_i+1\}), f_i(\{2, 3, \dots, k_i\}), \dots, f_i(\{2, 3\}), f_i(\{2\})\} \nonumber \\
&=\max\{f_i(\{2, 3, \dots, k_i+1\}), f_i(\{2\})\}\nonumber \\
&\leq\max\{f_i(\{2, 3, \dots, k_i+1\}), \max\{f_i(\{1\}), f_i(\{m_i\})\}\}
\end{align}
By Lemma \ref{clm30} , we have
\begin{align}\label{d}
&\max\{f_i(\{m_i, m_i+1, \dots, m_i+k_i-1\}), f_i(\{m_i, m_i+1, \dots, m_i+k_i-2\}), \dots,   f_i(\{m_i\})\} \nonumber \\
&=\max\{f_i(\{m_i, m_i+1, \dots, m_i+k_i-1\}),         f_i(\{m_i\})      \}.
\end{align}

\begin{clm}\label{claim2.14new}
Let $k\in [2, m_i]$. If $f_i([k, k+k_i-1])> f_i(\{k-1\}\cup [n-k_i+2, n])$, then $f_i(R)$ is increase on $[k, k+k_i-1]\prec R \prec [k, k+k_i-2]\cup\{n\}$.
\end{clm}

\begin{proof}
Since $f_i([k, k+k_i-1])> f_i(\{k-1\}\cup [n-k_i+2, n])$,
$$\beta(\{k-1\}\cup [n-k_i+2, n], [k, k+k_i-1])< \alpha(\{k-1\}\cup [n-k_i+2, n], [k, k+k_i-1]).$$
Note that $\{k-1\}\cup [n-k_i+2, n]<[k, k+k_i-1]$ in $\mathbb{R}_i$ and $\max R$ increase on $[k, k+k_i-1]\prec R \prec [k, k+k_i-2]\cup\{n\}$.
By Lemma \ref{clm23}, we can see that $f_i(R)$ is increase on $[k, k+k_i-1]\prec R \prec [k, k+k_i-2]\cup\{n\}$.
\end{proof}

If
$$\beta(\{m_i-1\}\cup [n-k_i+2, n], [m_i, m_i+k_i-1])\geq \alpha(\{m_i-1\}\cup [n-k_i+2, n], [m_i, m_i+k_i-1]),$$
then
\begin{equation}\label{eq35}
f_i([m_i, m_i+k_i-1])\leq f_i(\{m_i-1\}\cup [n-k_i+2, n])=f_i(\{m_i-1\})\leq\max \{f_i(\{1\}),  f_i(\{m_i\})\}.
\end{equation}
Otherwise,
$$\beta(\{m_i-1\}\cup [n-k_i+2, n], [m_i, m_i+k_i-1])< \alpha(\{m_i-1\}\cup [n-k_i+2, n], [m_i, m_i+k_i-1]).$$
So $f_i([m_i, m_i+k_i-1])> f_i(\{m_i-1\}\cup [n-k_i+2, n])$.
By Claim \ref{claim2.14new} and Lemma \ref{clm30}
\begin{equation}\label{eq35+}
f_i([m_i, m_i+k_i-1])<f_i([m_i, m_i+k_i-2]\cup\{n\})=f_i([m_i, m_i+k_i-2])<f_i(\{m_i\}).
\end{equation}
Combining (\ref{frr})--(\ref{eq35+}), we have
\begin{equation}\label{eq33}
f_i(\mathbb{R}_i)=\max \{f_i(\{2, 3, \dots, k_i+1\}), f_i(\{1\}),  f_i(\{m_i\})\}.
\end{equation}
If
$\beta((\{1\}, \{2, 3,\dots, k_i+1\}))\geq \alpha(\{1\}, \{2, 3,\dots, k_i+1\}),$
then $f_i(\{1\})\geq f_i(\{2, 3,\dots, k_i+1\})$. Otherwise,
$\beta((\{1\}, \{2, 3,\dots, k_i+1\}))< \alpha(\{1\}, \{2, 3,\dots, k_i+1\}).$
So $f_i([2, k_i+1])> f_i(\{1\})$. By Claim \ref{claim2.14new} and Lemma \ref{clm30}
again, $f_i([2, k_i+1])<f_i(\{2\})$.
Combining with  (\ref{eq33}), we have
\begin{equation}\label{ccc}
f_i(\mathbb{R}_i)=\max \{ f_i(\{1\}),  f_i(\{m_i\})\}.
\end{equation}
This complete the quantitative part of Theorem \ref{2}. What left is to discuss when the equality holds in the above equality. We will meet the following three cases.


Case (i) $d_i{n\choose k_i}-d_i{n-m_i\choose k_i}+\sum_{j\ne i}d_j{n-m_i\choose k_j-m_i}>\sum_{j=1}^td_j{n-1\choose k_j-1}$.\\
In this case $\sum_{j=1}^td_j|\mathcal{A}_j|=d_i{n\choose k_i}-d_i{n-m_i\choose k_i}+\sum_{j\ne i}d_j{n-m_i\choose k_j-m_i}.$
Combining Lemma \ref{clm28} and Lemma \ref{clm30}, we have $\sum_{j=1}^td_j|\mathcal{A}_j|=f_i(\{m_i\})$. In view of (\ref{fm}), we have $|\mathcal{A}_i|={n\choose k_i}-{n-m_i\choose k_i}$ and $|\mathcal{A}_j|={n-m_i\choose k_j-m_i}$ for $j\in[2, t]\setminus\{i\}$, in particular, $|\mathcal{A}_t|=1$. Let $\mathcal{A}_t=\{T\}$ for some $T\in {[n]\choose k_t}$.
Since $\mathcal{A}_i$ and $\mathcal{A}_t$ are cross-intersecting and $|\mathcal{A}_i|={n\choose k_i}-{n-k_t\choose k_i}$, we have $\mathcal{A}_i=\{F\in {[n]\choose k_i}: F\cap T\ne \emptyset\}$. Since $\mathcal{A}_j$ and $\mathcal{A}_t$ are cross-intersecting and  $|\mathcal{A}_j|={n-k_t\choose k_j-k_t}$ for $j\neq i$, we get $\mathcal{A}_j=\{F\in {[n]\choose k_j}: T\subset F\}$ for $j\neq i$. As desired.

Case (ii) $d_i{n\choose k_i}-d_i{n-m_i\choose k_i}+\sum_{j\ne i}d_j{n-m_i\choose k_j-m_i}<\sum_{j=1}^td_j{n-1\choose k_j-1}$.\\
In this case
$\sum_{j=1}^td_j|\mathcal{A}_j|=\sum_{j=1}^td_j{n-1\choose k_j-1}.$
To deal with this case, we will use a result proved independently  by F\"uredi and Griggs \cite{FG} and
M\"ors\cite{M}. To state it, we need a definition. For two integers $i$ and $j$ with $n\geq i+j$ and a family $\mathcal{F}\subseteq {[n]\choose i}$.
Denote
$$ \mathcal{D}_j(\mathcal{F})=:\left\{ D\in {[n]\choose j}: \exists F\in \mathcal{F} \ {\rm such \ that} \ D\cap F=\emptyset   \right\}.$$

\begin{proposition}[F\" uredi, Griggs \cite{FG}, M\"ors\cite{M}]\label{FGM}
Suppose that $n>k+l$, $\mathcal{B}\subseteq {[n]\choose l}$ with $|\mathcal{B}|={n-r\choose l-r}$ for some $1\leq r\leq l$. Then
$$\mathcal{D}_k(\mathcal{B})\geq {n-r\choose k}$$
with strictly inequality unless for some $R\in {[n]\choose r}$, $\mathcal{B}=\{B\in {[n]\choose l}: R\subseteq B \}$.
\end{proposition}

Combining Lemma \ref{clm28} and Lemma \ref{clm30}, we have $\sum_{j=1}^td_j|\mathcal{A}_j|=f_i(\{1\})$ and $|\mathcal{A}_j|={n-1\choose k_j-1}$ for each $j\in [t]$.
Since $\mathcal{A}_i$ and $\mathcal{A}_j$ are cross-intersecting for all $j\in [t]\setminus \{i\}$,  $\mathcal{A}_i\cap \mathcal{D}_{k_i}(\mathcal{A}_j)=\emptyset$. Therefore, $\vert  \mathcal{D}_i(\mathcal{A}_j) \vert \le {n \choose k_i}-{n-1 \choose k_i-1}={n-1 \choose k_i}$. If there is some $j\in [t]\setminus\{i\}$ such that $n>k_i+k_j$, then by taking $r=1$ in Proposition \ref{FGM}, there exists some $a\in [n]$ such that $\mathcal{A}_j=\{A\in {[n]\choose k_j}: a\in A\}$. So $\mathcal{A}_i=\{A\in {[n]\choose k_i}: a\in A\}$. Since $n\geq k_i+k_j$ for each $j\in[t]\setminus \{i\}$, $\mathcal{A}_j=\{A\in {[n]\choose k_j}: a\in A\}$.
If $n=k_i+k_j$ holds for every $j\in [t]\setminus \{i\}$ and $\sum_{j\ne i}d_j> d_i$, then by Remark \ref{remark2.12}, $\mathcal{A}_j=\{A: a\in A, |A|=k_j\}$ and for all $j\in [t]$ and for some fixed $a$. If $n=k_i+k_j$ holds for every $j\in [t]\setminus \{i\}$ and $\sum_{j\ne i}d_j=d_i$, then we will meet  Case (iii). If $n=k_i+k_j$ holds for every $j\in [t]\setminus \{i\}$ and $\sum_{j\ne i}d_j< d_i$, then we will meet Case (i).
As desired.

Case (iii) $d_i{n\choose k_i}-d_i{n-m_i\choose k_i}+\sum_{j\ne i}d_j{n-m_i\choose k_j-m_i}=\sum_{j=1}^td_j{n-1\choose k_j-1}$.\\
In this case
we first consider the case that $t=2$ and $n=k_i+k_{3-i}$. Suppose that $\mathcal{A}_i$ and $\mathcal{A}_{3-i}$ are the families such that $d_i|\mathcal{A}_i|+d_{3-i}|\mathcal{A}_{3-i}|$ is the maximum.  Note that a set $A\in {k_1+k_2 \choose k_1}$ intersects with any set $B\in {k_1+k_2 \choose k_2}$ except $B=\overline{A}$, that is, $|\mathcal{A}_1|+|\mathcal{A}_2|={n\choose k_1}={n-1\choose k_1-1}+{n-1\choose k_2-1}$. If $d_i\leq d_{3-i}$,  then $\mathcal{A}_{3-i}\subseteq {[n]\choose k_{3-i}}$ with $|\mathcal{A}_{3-i}|={n-1\choose k_{3-i}-1}$ and $\mathcal{A}_i={[n]\choose k_i}\setminus \overline{\mathcal{A}_{3-i}}$. If $d_i>d_{3-i}$,  then
there is some $k_{3-i}$-set $B\subseteq [n]$, such that $\mathcal{A}_{3-i}=\{B\}$ and $\mathcal{A}_i=\{A\in {[n]\choose k_i}: A\cap B\ne \emptyset\}$. This implies that we meet (1).
As required.

We next assume that if $t=2$, then $n>k_i+k_{3-i}$. If $n=k_i+k_j$ holds for every $j\in [t]\setminus \{i\}$ and $\sum_{j\ne i}d_j=d_i$, then by Remark \ref{remark2.13},  $\mathcal{A}_j=\mathcal{A}$ for all $j\in [t]\setminus \{i\}$, where $\mathcal{A}\subseteq {[n]\choose k}$ is an intersecting family with size $|\mathcal{A}|={n-1\choose k-1}$, and $\mathcal{A}_i={[n]\choose k_i}\setminus \overline{\mathcal{A}}$.
Otherwise, by Lemma \ref{clm28} and Lemma \ref{clm30},  either there exits some $a\in [n]$ such that $\mathcal{A}_j=\{A\in {[n]\choose k_j}: a\in A\}$ holds for every $j\in [t]$ or
there exists some $k_t$-set $T$ such that $\mathcal{A}_i=\{F\in {[n]\choose k_i}: F\cap T\ne \emptyset\}$ and $\mathcal{A}_j=\{F\in {[n]\choose k_j}: T\subset F\}$ for each $j\in [t]\setminus\{i\}$. As desired.


\subsection{Proofs of Theorems \ref{2+} and \ref{2++} }
\begin{proof}[Proof of Theorem \ref{2+}]
If each $\mathcal{A}_i$, $i\in [t]$ has size $|\mathcal{A}_i|\leq {n-1\choose k_i-1}$, then
$$\sum_{i=1}^td_i|\mathcal{A}_i|\leq \sum_{i=1}^td_i{n-1\choose k_i-1},$$
as desired.
Otherwise, we meet the condition of Theorem \ref{2}. Then Theorem \ref{2+} can be derived from Theorem \ref{2} immediately.
\end{proof}

\begin{proof}[Proof of Theorem \ref{2++}]
Recall that in Theorem \ref{2++}, $k_1\geq\dots\geq k_t$, $d_1\geq\dots\geq d_t$ and $n\geq k_1+k_2$.
To see this theorem, we only need to confirm the following proposition and we will prove it at the end of this paper.

\begin{proposition}\label{g}
Suppose that $n\geq k_1+k_2 $, $k_1\geq k_2\geq \dots \geq k_t$ are positive integers and  $d_1\geq\dots\geq d_t$ are positive constants. Let $i\in [t]$, then for $1\leq s \leq k_t$, we have
\[
f_1(\{s\})=\max \{f_j(\{s\}): j\in [t]\}.
\]
In particular,
\begin{align*}
&f_1(\{1\})=\max \{f_j(\{1\}): j\in [t]\},\\
&f_1(\{m_1\})=\max \{f_j(\{m_j\}): j\in [t]\}.
\end{align*}
\end{proposition}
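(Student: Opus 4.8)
\textbf{Proof proposal for Proposition \ref{g}.}

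The plan is to establish the pointwise inequality $f_j(\{s\}) \le f_1(\{s\})$ for every $j \in [t]$ and every $s$ with $1 \le s \le m$, which immediately gives both the general identity and the two special cases by taking $s=1$ and $s=m$. The starting point is an explicit formula for $f_j(\{s\})$. Recall that $\{s\}$ is shorthand for the set $R = \{s, n-k_j+2, n-k_j+3, \dots, n\}$, so its partner $T$ consists of those elements of $[n]$ not covered up to the point where $R$ and $T$ strongly intersect; since $\min R = s \le m \le k_j$, one computes $T = \{1, 2, \dots, s-1, s+1\}$ (a set of size $s$, recorded here so the reader can check it from the definition of partner). Plugging $R$ and $T$ into (\ref{new}) via Proposition \ref{prop9}, both the $\mathcal{A}_j$-term and the $\sum_{l\ne j}$-terms become explicit sums of binomial coefficients. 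Concretely, $|\mathcal{L}([n], R, k_j)| = {n\choose k_j} - {n-s\choose k_j}$ is the number of $k_j$-sets meeting $[s]$ (a fact one sees directly, or from (\ref{eq1})), and $|\mathcal{L}([n], T, k_l)|$ is the number of $k_l$-sets meeting $\{1,\dots,s-1\}$ or containing $\{1,\dots,s-1,s+1\}$, which one writes out from (\ref{eq2}). So
\begin{align*}
f_j(\{s\}) = {n\choose k_j} - {n-s\choose k_j} + \sum_{l\ne j}\left[{n\choose k_l} - {n-(s-1)\choose k_l} + {n-(s+1)\choose k_l-s}\right].
\end{align*}

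Next I would reorganize this expression to isolate the dependence on $j$. Adding and subtracting so that every index $l \in [t]$ appears symmetrically in the ``$\binom{n}{k_l}$'' and ``$\binom{n-(s-1)}{k_l}$'' blocks, one gets
\begin{align*}
f_j(\{s\}) = \sum_{l=1}^{t}\left[{n\choose k_l} - {n-(s-1)\choose k_l}\right] + \sum_{l\ne j}{n-(s+1)\choose k_l-s} + \left[{n-(s-1)\choose k_j} - {n-s\choose k_j}\right].
\end{align*}
The first sum is independent of $j$, so the comparison of $f_j(\{s\})$ and $f_1(\{s\})$ reduces to comparing
\[
g_j := \sum_{l\ne j}{n-(s+1)\choose k_l-s} + {n-(s-1)\choose k_j} - {n-s\choose k_j}
= \sum_{l=1}^{t}{n-(s+1)\choose k_l-s} + {n-(s-1)\choose k_j} - {n-s\choose k_j} - {n-(s+1)\choose k_j-s}.
\]
Again the full sum over $l$ is $j$-independent, so it remains to show that $h(k) := {n-(s-1)\choose k} - {n-s\choose k} - {n-(s+1)\choose k-s}$ is maximized over $k \in \{k_1,\dots,k_t\}$ at $k=k_1$, i.e. that $h$ is nondecreasing in $k$ on the relevant range. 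Using Pascal's identity, ${n-(s-1)\choose k} - {n-s\choose k} = {n-s\choose k-1}$, so $h(k) = {n-s\choose k-1} - {n-(s+1)\choose k-s}$; one then checks $h(k+1) - h(k) = {n-s\choose k} - {n-s\choose k-1} - \left[{n-(s+1)\choose k-s+1} - {n-(s+1)\choose k-s}\right] \ge 0$ whenever $k+1 \le (n-s)/2$-ish, using $n \ge k_1 + k_2 \ge 2k$ and $s \le m \le k$; this is the routine binomial monotonicity step and I would not grind through it.

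The main obstacle is pinning down the partner $T$ of $R = \{s\} \cup [n-k_j+2, n]$ correctly and, relatedly, keeping the bookkeeping of the two ``extra'' binomial terms straight — the term $-{n-s\choose k_j}$ coming from the size of $\mathcal{L}([n], R, k_j)$ versus the terms ${n-(s-1)\choose k_l}$ and ${n-(s+1)\choose k_l-s}$ coming from the sizes of $\mathcal{L}([n], T, k_l)$ for $l \ne j$. Since the condition $s \le m = \min_{l\ne j} k_l \le k_j$ guarantees $\min R = s$ is small relative to every relevant uniformity, Proposition \ref{coro16} ensures all the families $\mathcal{L}([n], T, k_l)$ are nonempty, so no degenerate cases arise; and the inequality $n \ge k_1 + k_2$ is exactly what makes the final monotonicity of $h(k)$ go through. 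Once $f_j(\{s\}) \le f_1(\{s\})$ is established for all $j$, the displayed identities follow by specializing $s$, and this is what was needed to pass from ``$f_i(\mathcal{R}) = \max\{f_i(\{1\}), f_i(\{m\})\}$'' in (\ref{ccc}) to the clean bound in terms of $f_1$ stated in Theorem \ref{2}.
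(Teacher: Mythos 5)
The central computation in your proposal is wrong. With $R' = \{s\}$, the partner $T$ (strongly intersecting $R'$ at its last element $s$) must satisfy $R'\cap T=\{s\}$ and $R'\cup T=[s]$, so $T=[s]$, not $\{1,\dots,s-1,s+1\}$ (your $T$ does not even contain $s$, so $R'\cap T=\emptyset$). Consequently $|\mathcal{L}([n],T,k_l)|={n-s\choose k_l-s}$ by Proposition~\ref{prop9} (this is just the family of $k_l$-sets containing $[s]$), and the correct identity is
\[
f_j(\{s\})=\sum_{a=1}^s{n-a\choose k_j-1}+\sum_{l\ne j}{n-s\choose k_l-s}
          ={n\choose k_j}-{n-s\choose k_j}+\sum_{l\ne j}{n-s\choose k_l-s},
\]
not the expression you wrote. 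So your $h$ and all subsequent manipulations are built on the wrong formula.

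Even after fixing $T$, the ``routine binomial monotonicity'' step you wave away is the crux, and it is not routine in the form you state it. Isolating the $j$-dependent part gives $h(k):={n\choose k}-{n-s\choose k}-{n-s\choose k-s}$, and this function satisfies $h(k)=h(n-k)$ (e.g. $n=13,s=2$: $h(6)=h(7)=924$, $h(8)=660$), so it is \emph{not} nondecreasing on $[s,k_1]$. Your justification ``$n\ge k_1+k_2\ge 2k$'' fails precisely at $k=k_1>k_2$, which is the point you most need. The inequality $h(k_1)\ge h(k_2)$ does hold under $n\ge k_1+k_2$, but only because of the symmetry $h(k_1)=h(n-k_1)$ together with $k_2\le n-k_1\le n/2$; you would have to prove and use that symmetry explicitly. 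The paper sidesteps this entirely: it fixes $k_1,k_2,s$ and treats $g(n):=f_1(\{s\})-f_2(\{s\})$ as a function of $n$, observes $g(k_1+k_2)=0$ (the binomial identities collapse by the symmetry ${k_1+k_2\choose k_1}={k_1+k_2\choose k_2}$ etc.), and then proves $g(q+1)\ge g(q)$ for $q\ge k_1+k_2$ via Claims~\ref{pp} and~\ref{?}. That route avoids the non-monotonicity in $k$ altogether and is the cleaner way to close the argument; as written, your proposal has both a computational error and a genuine logical gap.
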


We owe the proofs of Proposition \ref{g},  and Lemmas \ref{clm3}, \ref{clm28},  \ref{clm29} and \ref{clm30}. The proof of Proposition \ref{g} will be given in Section \ref{subprooflemma}, and the proofs of Lemmas \ref{clm3}, \ref{clm28},  \ref{clm29} and \ref{clm30}  will be given in Section \ref{sec3}.

\subsection{Proof of Proposition \ref{g}}\label{subprooflemma}

\noindent
{\em Proof of Proposition \ref{g}.}
Note that for each $j\in [t]$, we have
\begin{equation}\label{cc}
f_j(\{1\})=\sum_{q=1}^td_j{n-1\choose k_q-1},
\end{equation}
since $\mathcal{A}_j$ is the family of all sets  having lex order smaller than or equal to $\{1\}$, this means that $\mathcal{A}_j$ is the full star containing $1$. Consequently, all sets in other $\mathcal{A}_l$ are also the full star containing $1$ since they are pairwise cross-intersecting.
So $f_1(\{1\})=\max \{f_j(\{1\}): j\in [t]\}$.

We first deal with the case $t=2$. In this case, $m_1=k_2$, $m_2=k_1$ and
\begin{align*}
&f_1(\{m_1\})=d_1{n-1\choose k_1-1}+\dots+d_1{n-k_2\choose k_1-1}+d_2,\\
&f_2(\{m_2\})=d_2{n-1\choose k_2-1}+\dots+d_2{n-k_1\choose k_2-1}+d_1.
\end{align*}
For $q\geq k_1+k_2$, denote
$$h(q)={n-1\choose k_1-1}+\dots+{n-k_2\choose k_1-1}-{n-1\choose k_2-1}-\dots-{n-k_1\choose k_2-1}.$$
Since $q\geq k_1+k_2$ and $k_1\geq k_2$, $h(q)$ increases as $q$ increases. Notice that when $q=k_1+k_2$, $h(q)=0$, so $h(q)\geq 0$.
Thus
$$f_1(\{m_1\})-f_2(\{m_2\})\geq (d_1-d_2)\bigg({n-1\choose k_2-1}+\dots+{n-k_1\choose k_2-1}-1\bigg)\geq 0.$$
As desired.

Next we may assume that $t>2$. So $m_1=m_2=k_t$.
We next prove that for $2\leq s\leq k_t$,
$f_1(\{s\})=\max \{f_j(\{s\}): j\in [t]\}.$

Since $n\geq k_1+k_2$ and $k_1\geq k_2\geq \cdots \geq k_t$, we  only need to prove that
\begin{equation}\label{f}
f_1(\{s\})\geq f_2(\{s\}).
\end{equation}
By the definition of $f_j(R)$, we have
\begin{align*}
f_1(\{s\})=d_1{n-1\choose k_1-1}+\cdots+d_1{n-s\choose k_1-1}+\sum_{j=2}^td_j{n-s\choose k_j-s},
\end{align*}
and
\begin{align*}
f_2(\{s\})=d_2{n-1\choose k_2-1}+\cdots+d_2{n-s\choose k_2-1}+\sum_{j\ne 2, j=1}^td_j{n-s\choose k_j-s}.
\end{align*}

We denote
$$g(n)={n-1\choose k_1-1}+\cdots+{n-s\choose k_1-1}-{n-1\choose k_2-1}-\cdots-{n-s\choose k_2-1}+{n-s\choose k_2-s}-{n-s\choose k_1-s}.$$

\begin{clm}\label{pp}
For any integer $q$ with $q\geq k_1+k_2$ and $k_1\geq k_2$, we have
\begin{equation}\label{p}
g(q+1)-g(q)\geq 0.
\end{equation}
\end{clm}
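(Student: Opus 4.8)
\textbf{Proof proposal for Claim \ref{pp}.}

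The plan is to compute $g(q+1)-g(q)$ directly from the closed formulas for $f_1(\{s\})$ and $f_2(\{s\})$ and show the resulting expression is nonnegative, using only Pascal's identity and the monotonicity $k_1\geq k_2$. Writing $g(q)=f_1(\{s\})-f_2(\{s\})$ with $n$ replaced by $q$, we have
\[
g(q)=\sum_{r=1}^{s}\left[\binom{q-r}{k_1-1}-\binom{q-r}{k_2-1}\right]+\left[\binom{q-s}{k_1-s}-\binom{q-s}{k_2-s}\right],
\]
since the cross terms $\sum_{j\ne 1}\binom{q-s}{k_j-s}$ and $\sum_{j\ne 2}\binom{q-s}{k_j-s}$ differ only in the $j=2$ versus $j=1$ slot. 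First I would apply Pascal's identity termwise: $\binom{q+1-r}{k_1-1}-\binom{q-r}{k_1-1}=\binom{q-r}{k_1-2}$ and likewise for $k_2$ and for the last pair, so that
\[
g(q+1)-g(q)=\sum_{r=1}^{s}\left[\binom{q-r}{k_1-2}-\binom{q-r}{k_2-2}\right]+\left[\binom{q-s}{k_1-s-1}-\binom{q-s}{k_2-s-1}\right].
\]
(One must be slightly careful when $k_2=s$ or $k_2=1$, where some binomial coefficients vanish or the last bracket degenerates; these boundary cases should be checked separately but cause no trouble.)

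Next I would argue that every bracket on the right is nonnegative. For the terms $\binom{q-r}{k_1-2}-\binom{q-r}{k_2-2}$ with $1\leq r\leq s$: since $k_1\geq k_2$ and $r\leq s\leq m\leq k_2$ (here I use $m=\min_{j\ne i}k_j\le k_2$, which holds because $i=1$ forces $m\le k_2$, while if $i\ne 1$ then $k_2$ is among the $k_j$ with $j\ne i$ only when... — in fact the cleanest route is to note $s\le m\le k_t\le k_2$ whenever the statement is invoked), the two lower indices $k_2-2$ and $k_1-2$ both lie in the range where $\binom{q-r}{\cdot}$ is nondecreasing up to its peak, and $q-r\geq q-s\geq k_1+k_2-s\geq k_1$, so $k_1-2\le (q-r)/2$ fails in general — instead I would use the elementary monotonicity fact that $\binom{N}{a}\ge\binom{N}{b}$ whenever $b\le a\le N/2$, or more robustly the identity $\binom{N}{a}-\binom{N}{b}=\sum$ of nonnegative terms when $a\ge b$ and $a+b\le N$. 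Since $(k_1-2)+(k_2-2)\le k_1+k_2-2\le q-2\le q-r$ for $r\le s$ (as $s\ge 1$... wait $r$ can be as large as $s$, need $s\le q-k_1-k_2+2$, true since $q\ge k_1+k_2$ gives only $r\le 2$) — the correct bound is $(k_1-2)+(k_2-2)\le (q-r)$ which follows from $q-r\ge q-s\ge k_1+k_2-s\ge k_1+k_2-m\ge k_1+k_2-k_2=k_1\ge k_1+k_2-4$ when $k_2\le 4$; in general one shows $k_1+k_2-4\le q-r$ i.e. $r\le q-k_1-k_2+4$, and since $r\le s\le m\le k_2$ while $q-k_1-k_2+4\ge 4-k_2+$ (nonneg), this needs the genuine inequality $s\le q-k_1-k_2+4$. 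The cleanest fix is to prove the stronger pointwise statement $\binom{q-r}{k_1-2}\ge\binom{q-r}{k_2-2}$ using that the multiset $\{k_2-2,\dots\}$ sits below the symmetry axis: $k_1-2\le q-r-(k_2-2)$ iff $k_1+k_2-4\le q-r$, and this I would establish from $q\ge k_1+k_2$ and $r\le s\le m$ by a short case distinction recorded inside the proof.

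The main obstacle I anticipate is precisely this bookkeeping: verifying that each binomial difference $\binom{N}{a}-\binom{N}{b}$ is nonnegative requires $b\le a$ and $a+b\le N$ (equivalently $a\le N-b$), and pinning down that $a+b\le N$ holds in every term demands using the hypotheses $q\ge k_1+k_2$, $k_1\ge k_2$, and $s\le m$ together — there is no single slick inequality, so I would organize it as: (1) reduce via Pascal to a sum of differences $\binom{q-r}{k_1-c}-\binom{q-r}{k_2-c}$ for appropriate shifts $c$; (2) for each, note $k_1-c\ge k_2-c$ and check $(k_1-c)+(k_2-c)\le q-r$ using $q-r\ge q-s\ge k_1+k_2-s$ and $s\ge c$-ish bounds; (3) conclude each term is $\ge 0$, hence $g(q+1)-g(q)\ge 0$. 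Combined with the base case $g(k_1+k_2)=0$ already established in the text, this yields $g(q)\ge 0$ for all $q\ge k_1+k_2$, which is exactly inequality (\ref{f}), completing the proof of Proposition \ref{g} in the range $2\le s\le m$; the cases $s=1$ and the reduction to $f_1\ge f_2$ have already been handled above.
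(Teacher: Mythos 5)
Your proposal contains a sign error that propagates into the main step and, even after correcting it, the ``show every bracket is nonnegative'' strategy fails for a genuine reason, not merely bookkeeping.

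The sign error: since $f_1(\{s\})$ carries $\sum_{j\neq 1}\binom{q-s}{k_j-s}$ and $f_2(\{s\})$ carries $\sum_{j\neq 2}\binom{q-s}{k_j-s}$, the cross terms in $g=f_1-f_2$ contribute $\binom{q-s}{k_2-s}-\binom{q-s}{k_1-s}$, not $\binom{q-s}{k_1-s}-\binom{q-s}{k_2-s}$ as you wrote. After differencing, the correct expression is
\[
g(q+1)-g(q)=\sum_{r=1}^{s}\left[\binom{q-r}{k_1-2}-\binom{q-r}{k_2-2}\right]+\left[\binom{q-s}{k_2-s-1}-\binom{q-s}{k_1-s-1}\right].
\]
Now the last bracket is typically \emph{negative} when $k_1>k_2$, and, more importantly, the terms in the first sum are \emph{not} individually nonnegative either: take $k_1=6$, $k_2=5$, $q=k_1+k_2=11$, $t=2$ so $m=k_2=5$, and $s=r=5$. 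Then $\binom{q-r}{k_1-2}-\binom{q-r}{k_2-2}=\binom{6}{4}-\binom{6}{3}=15-20=-5<0$. The criterion you invoke ($\binom{N}{a}\geq\binom{N}{b}$ when $b\leq a$ and $a+b\leq N$) requires $(k_1-2)+(k_2-2)\leq q-r$, i.e.\ $r\leq q-k_1-k_2+4$, and this fails as soon as $r\geq 5$ when $q=k_1+k_2$. You flag this concern yourself, but no ``short case distinction'' can rescue termwise nonnegativity because the individual terms really do go negative.

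The paper takes a different route precisely to avoid this. It first rewrites $g(q)$ using the telescope $\binom{q-1}{k_1-1}-\binom{q-s}{k_1-s}=\sum_{j=2}^{s}\binom{q-j}{k_1-j+1}$ (and similarly for $k_2$). Differencing this rewritten form splits $g(q+1)-g(q)$ into (a) $\sum_{j=2}^{s}\left[\binom{q-j}{k_1-j}-\binom{q-j}{k_2-j}\right]$, which \emph{is} termwise nonnegative because here the lower indices both shrink by $j$, so $(k_1-j)+(k_2-j)\leq q-j$ follows directly from $q\geq k_1+k_2$; and (b) the residual $\left[\binom{q-1}{k_1-1}-\binom{q-s}{k_1-1}\right]-\left[\binom{q-1}{k_2-1}-\binom{q-s}{k_2-1}\right]$, which the paper handles via Claim~\ref{?} by a two-case argument: either $\binom{q-s}{k_1-1}\leq\binom{q-s}{k_2-1}$, in which case a single application of the basic comparison at $j=1$ suffices, or $\binom{q-s}{k_1-1}>\binom{q-s}{k_2-1}$, in which case a ratio-monotonicity argument shows $\binom{q-s+j}{k_1-2}\geq\binom{q-s+j}{k_2-2}$ for all $j\geq 0$ and the sum $\sum_{j=2}^{s}\left[\binom{q-j}{k_1-2}-\binom{q-j}{k_2-2}\right]$ becomes termwise nonnegative \emph{in that case only}. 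Your proposal misses both the crucial telescoping rewrite and the casewise ratio argument, which are exactly what makes the nonnegativity go through.
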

{\em Proof of Claim \ref{pp}.}
Indeed,
\begin{align*}
g(q)&={q-1\choose k_1-1}+\cdots+{q-s\choose k_1-1}+{q-s\choose k_2-s}\\
&\quad -\left\{{q-1\choose k_2-1}+\cdots+{q-s\choose k_2-1}+{q-s\choose k_1-s}\right\}\\
&={q-2\choose k_1-1}+\cdots+{q-s\choose k_1-1}+\sum_{j=2}^{s}{q-j\choose k_1-j+1}\\
&\quad -\left\{{q-2\choose k_2-1}+\cdots+{q-s\choose k_2-1}+\sum_{j=2}^{s}{q-j\choose k_2-j+1}\right\},
\end{align*}
and
\begin{align*}
g(q+1)&={q-1\choose k_1-1}+\cdots+{q+1-s\choose k_1-1}+\sum_{j=2}^{s}{q+1-j\choose k_1-j+1}\\
&\quad -\left\{{q-1\choose k_2-1}+\cdots+{q+1-s\choose k_2-1}+\sum_{j=2}^{s}{q+1-j\choose k_2-j+1}\right\}.
\end{align*}
Since $q\geq k_1+k_2$ and $k_1\geq k_2$, then for all $j\geq 0$, we have
\begin{equation}\label{fff}
{q-j\choose k_1-j}\geq {q-j\choose k_2-j}.
\end{equation}
This gives
\begin{align*}
&\sum_{j=2}^{s}{q+1-j\choose k_1-j+1}-\sum_{j=2}^{s}{q+1-j\choose k_2-j+1}
-\sum_{j=2}^{s}{q-j\choose k_1-j+1}+\sum_{j=2}^{s}{q-j\choose k_2-j+1}\\
&=\sum_{j=2}^{s}\left\{{q-j\choose k_1-j}-{q-j\choose k_2-j}\right\}\\
&\geq 0.
\end{align*}
Hence, to get (\ref{p}), it is sufficient to show the following claim.
\begin{clm}\label{?}
For any integer $q$ with $q\geq k_1+k_2$ and $k_1\geq k_2$, we have
\begin{equation}\label{ffff}
{q-1\choose k_1-1}-{q-s\choose k_1-1}\geq {q-1\choose k_2-1}-{q-s\choose k_2-1}.
\end{equation}
\end{clm}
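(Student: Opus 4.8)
The plan is to reduce the inequality $\binom{q-1}{k_1-1}-\binom{q-s}{k_1-1}\geq \binom{q-1}{k_2-1}-\binom{q-s}{k_2-1}$ to a sum of pairwise binomial comparisons, each of which is an instance of the elementary fact \eqref{fff} (or its shift). The observation is that for a fixed top parameter $r$, the difference $\binom{r}{k-1}-\binom{r-(s-1)}{k-1}$ telescopes: writing each binomial difference through Pascal's rule, $\binom{q-1}{k-1}-\binom{q-s}{k-1}=\sum_{p=1}^{s-1}\left(\binom{q-p}{k-1}-\binom{q-p-1}{k-1}\right)=\sum_{p=1}^{s-1}\binom{q-p-1}{k-2}$. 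Hence \eqref{ffff} is equivalent to $\sum_{p=1}^{s-1}\binom{q-p-1}{k_1-2}\geq \sum_{p=1}^{s-1}\binom{q-p-1}{k_2-2}$, and it suffices to check this term by term.

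The key step is then: for each $p\in[1,s-1]$, show $\binom{q-p-1}{k_1-2}\geq \binom{q-p-1}{k_2-2}$. Since $k_1\geq k_2$, both lower indices satisfy $k_1-2\geq k_2-2$; and because $q\geq k_1+k_2$ and $s\leq m\leq k_2\leq k_1$ we have $q-p-1\geq q-s\geq q-k_2\geq k_1\geq k_1-2$, so $k_1-2\leq q-p-1$ and the top index dominates the larger lower index. Now I would invoke the standard monotonicity of binomial coefficients: for a fixed $N$ and $0\leq b\leq a\leq N/2$ one has $\binom{N}{a}\geq\binom{N}{b}$; more precisely, whatever is the right regime, the relevant comparison is exactly \eqref{fff} applied with top parameter $q-p-1$ in place of $q-j$ and lower parameters $k_1-2, k_2-2$ in place of $k_1-j, k_2-j$ — note $q-p-1\geq k_1+k_2-p-1\geq (k_1-2)+(k_2-2)$ whenever $p\leq s-1\leq k_2-1$, which guarantees $\binom{q-p-1}{k_1-2}\geq\binom{q-p-1}{k_2-2}$ by the same argument that establishes \eqref{fff}. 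Summing over $p$ yields \eqref{ffff}, hence \eqref{p}, hence $g(q+1)\geq g(q)$ for all $q\geq k_1+k_2$; together with $g(k_1+k_2)=0$ this gives $g(q)\geq 0$ for all $q\geq k_1+k_2$, i.e. \eqref{f}, completing Proposition \ref{g}.

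The only mild obstacle is bookkeeping the ranges of indices so that every binomial coefficient appearing is evaluated in the regime where the monotonicity statement \eqref{fff} genuinely applies; in particular one must use $s\leq m$ and $k_t=m$ together with $k_1\geq k_2\geq\cdots\geq k_t$ to control the smallest top parameters $q-s$ and the lower parameters near $k_2-s$. Once that is done, the argument is purely a repeated application of the inequality $\binom{N}{a}\geq\binom{N}{b}$ for $a\geq b$ with $a+b\leq N$, which is the same principle already used for \eqref{fff}. No new ideas beyond Pascal's rule and this monotonicity are needed.
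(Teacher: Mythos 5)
Your reduction by Pascal's rule to the termwise comparison $\binom{q-p-1}{k_1-2}\geq\binom{q-p-1}{k_2-2}$ for $p\in[1,s-1]$ is the same telescoping the paper performs, but the termwise inequality is not valid throughout that range, and your justification for it is incorrect. You assert that $q-p-1\geq k_1+k_2-p-1\geq(k_1-2)+(k_2-2)$ whenever $p\leq s-1\leq k_2-1$; however the middle inequality reads $k_1+k_2-p-1\geq k_1+k_2-4$, i.e.\ $p\leq 3$, which has nothing to do with the bound $p\leq k_2-1$. When $q$ is near its minimum $k_1+k_2$ and $s>4$, the top parameter $q-p-1$ drops below $(k_1-2)+(k_2-2)$ for the larger values of $p$ and the comparison reverses. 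For instance, with $k_1=10$, $k_2=8$, $q=18=k_1+k_2$, $s=8$ and $p=7$ one has $\binom{q-p-1}{k_1-2}=\binom{10}{8}=45<210=\binom{10}{6}=\binom{q-p-1}{k_2-2}$, even though the overall inequality (\ref{ffff}) does hold ($24300\geq 19328$). So the unconditional termwise step is simply false and the proposed proof does not go through.

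The paper repairs exactly this with a case split before telescoping. If $\binom{q-s}{k_1-1}\leq\binom{q-s}{k_2-1}$, no telescoping is needed at all: combine this with $\binom{q-1}{k_1-1}\geq\binom{q-1}{k_2-1}$, which is (\ref{fff}) at $j=1$, and (\ref{ffff}) follows at once. If instead $\binom{q-s}{k_1-1}>\binom{q-s}{k_2-1}$, this very hypothesis forces $q-s>k_1+k_2-2$, so every top index $q-j$ with $2\leq j\leq s$ exceeds $(k_1-2)+(k_2-2)$ and the termwise comparisons you want are now genuinely valid; the paper then establishes them via a ratio-monotonicity argument. In short, the decomposition is the same as the paper's, but you need the case split to make the termwise step sound.
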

\begin{proof}[Proof of Claim \ref{?}]
If ${q-s\choose k_1-1}\leq {q-s\choose k_2-1}$, then applying (\ref{fff}) for $j=1$, we have ${q-1\choose k_1-1}\geq {q-1\choose k_2-1}$, so the desired inequality (\ref{ffff}) holds. Suppose that ${q-s\choose k_1-1}> {q-s\choose k_2-1}$. Since $k_1\geq k_2$ and $q\geq k_1+k_2$, we have
$$\frac{{q-s\choose k_1-2}}{{q-s\choose k_1-1}}\geq \frac{{q-s\choose k_2-2}}{{q-s\choose k_2-1}}, $$
this implies that ${q-s\choose k_1-2}> {q-s\choose k_2-2}$. Similarly, for all $j\geq 0$,
$$\frac{{q-s+j\choose k_1-2}}{{q-s\choose k_1-2}}\geq \frac{{q-s+j\choose k_2-2}}{{q-s\choose k_2-2}}.$$
So ${q-s+j\choose k_1-2}\geq {q-s+j\choose k_2-2}$ holds for any $j\geq 0$, yielding
\begin{align*}
&{q-1\choose k_1-1}-{q-s\choose k_1-1}-\left\{  {q-1\choose k_2-1}-{q-s\choose k_2-1}\right\}\\
&=\sum_{j=2}^{s}\left\{ {q-j\choose k_1-2}-{q-j\choose k_2-2} \right\}\\
&\geq 0,
\end{align*}
as desired.
\end{proof}
This complete the proof of Claim \ref{pp}.
\end{proof}
Note that $g(k_1+k_2)=0$. Then Claim \ref{pp} gives
$${n-1\choose k_1-1}+\cdots+{n-s\choose k_1-1}\geq{n-1\choose k_2-1}+\cdots+{n-s\choose k_2-1}+{n-s\choose k_1-s}-{n-s\choose k_2-s}.$$
Thus,
$$f_1(\{s\})-f_2(\{s\})\geq (d_1-d_2)\Big({n-1\choose k_2-1}+\cdots+{n-s\choose k_2-1}-{n-s\choose k_2-s}\Big)\geq0.$$
This proves (\ref{f}) and completes the proof of Proposition \ref{g}.

\medskip

\section{Verifying unimodality: proofs of Lemmas \ref{clm3}, \ref{clm28},  \ref{clm29}, \ref{clm30}}\label{sec3}
We show some preliminary properties. We need the following preparation.

\begin{clm}\label{clm20}
Let $F_1, F_2, F'_1, F'_2\in \mathbb{R}_i, c\in [k_i], F_1\overset{c}{\prec} F_2$ and $F'_1\overset{c}{\prec} F'_2$. If $\max F_1=\max F'_1$, then $\alpha(F_1, F_2)=\alpha(F'_1, F'_2)$ and $\beta(F_1, F_2)=\beta(F'_1, F'_2)$.
\end{clm}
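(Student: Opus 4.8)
\textbf{Proof plan for Claim \ref{clm20}.}
The plan is to reduce everything to the explicit size formulas of Proposition \ref{prop9} and to exploit the fact that, in a $c$-sequential step $F_1 \overset{c}{\prec} F_2$, the two sets differ only in their top block $\{b-c+1,\dots,b\}$ versus $\{b-c+2,\dots,b+1\}$, while everything below the head stays fixed. First I would record the shape of a member of $\mathcal{R}$: by definition each $R_r$ equals $\{a\}\cup[n-k_i+2,n]$ for $0\le r$ small, but more relevantly, by Remark \ref{coro10} and Remark \ref{coro11} we may always replace $F$ by the ``reduced'' set obtained by chopping the maximal run ending at $n$; so when $\max F_1 = \max F_1' =: q$, either $q<n$ and then $F_1$ and $F_1'$ genuinely have the same largest element $q$, or $q=n$ and we pass to the reduced versions. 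The key point is that $\alpha$ and $\beta$ are unchanged under this reduction (Remark \ref{coro11} for $\beta$, and $|\mathcal{L}([n],F,k_i)|$ is literally the number of $k_i$-sets $\prec F$, which is Remark \ref{coro10}-invariant for $\alpha$).

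Next I would compute $\alpha(F_1,F_2)$ directly. Since $F_1 \overset{c}{\prec} F_2$ with common head $A$ (of size $k_i-c$) and top blocks $\{b-c+1,\dots,b\}$, $\{b-c+2,\dots,b+1\}$, the sets strictly between $F_1$ and $F_2$ in $\binom{[n]}{k_i}$ that are counted by $\alpha$ are exactly those of the form $A\cup\{b-c+2,\dots,b\}\cup\{x\}$ with $b+1 \le x$ ranging appropriately, plus possibly $F_1$ itself depending on the $\prec$ convention; in any case $\alpha(F_1,F_2)$ is a sum of binomial coefficients depending only on $|A|=k_i-c$, on $b=\max F_1$, and on the ``gap structure'' of $A$ relative to the block. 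The crucial observation is that for $F,G\in\mathcal R$ the head $A$ is forced: $\mathcal R$ consists of sets whose only freedom is the first coordinate (everything else is a terminal run), so once $c$ and $\max F_1$ are fixed, $A$ is determined up to the harmless reductions above. Hence $\alpha(F_1,F_2)$ is a function of $c$ and $\max F_1$ alone, giving $\alpha(F_1,F_2)=\alpha(F_1',F_2')$. For $\beta$, I would invoke Proposition \ref{clm23}: it already states $\beta(F,G) = \sum_{j\ne i}\binom{n-q}{k_j-(q-k_i)}$ where $q=\max G$; since $F_1\overset{c}{\prec} F_2$ forces $\max F_2 = \max F_1 + 1$ (the top block shifts up by one), we get $\max F_2 = \max F_1' + 1 = \max F_2'$, and Proposition \ref{clm23} immediately yields $\beta(F_1,F_2)=\beta(F_1',F_2')$.

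The main obstacle I anticipate is bookkeeping the case $\max F_1 = n$: here the notion ``$F_1 \overset{c}{\prec} F_2$'' must be interpreted via the successor relation inside $\mathcal R$, and $F_2$ is no longer $F_1$ with a simple block shift — one has to carry a digit. This is exactly what Remark \ref{coro11} is designed to handle: after reduction to $\widetilde F_1, \widetilde F_2$ one is back in the $\max < n$ situation, and the $\alpha$, $\beta$ values are preserved. So the proof is really: (1) reduce both pairs via Remarks \ref{coro10}--\ref{coro11}; (2) observe that in $\mathcal R$ the head is determined by $c$ and the largest element; (3) apply Proposition \ref{prop9} for $\alpha$ and Proposition \ref{clm23} for $\beta$, noting $\max F_2 = \max F_1 + 1$ in all cases. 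I would keep the displayed computation short, just exhibiting $\alpha(F_1,F_2)$ as $\binom{n-\max F_1 - 1}{\,c-1\,}$-type expression (the precise form following from \eqref{eq3}) to make the dependence on $\max F_1$ transparent, and then cite Proposition \ref{clm23} for $\beta$ without re-deriving it.
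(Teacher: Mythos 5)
Your proposal has two genuine gaps.

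First, the assertion that in $\mathcal{R}$ the head $A$ is ``forced'' by $c$ and $\max F_1$ is false. $\mathcal{R}$ is simply the interval of $k_i$-sets in lex order from $\{1,n-k_i+2,\dots,n\}$ to $\{m,n-k_i+2,\dots,n\}$; a set such as $\{2,3,7,8\}$ and a set such as $\{2,4,7,8\}$ (for $k_i=4$, $c=2$) both lie in $\mathcal{R}$, both have the same maximum $8$, and both are $2$-sequential, yet their heads $\{2,3\}$ and $\{2,4\}$ differ. If the head really were determined, the claim would say nothing (one would have $F_1=F_1'$ and $F_2=F_2'$). The whole content of Claim~\ref{clm20} is that $\alpha$ and $\beta$ are insensitive to the head, and that insensitivity is exactly what needs to be proved; it cannot be asserted by saying the head is determined.

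Second, you invoke Proposition~\ref{clm23} to conclude $\beta(F_1,F_2)=\beta(F_1',F_2')$ directly from $\max F_2=\max F_2'$, but Proposition~\ref{clm23} is stated only for \emph{consecutive} sets $F<G$ in $\mathcal{R}$, whereas $F_1\overset{c}{\prec}F_2$ can have many intermediate sets between them when $c\ge 2$. To use Proposition~\ref{clm23} you would first need to telescope via Claim~\ref{clm21} across all intermediate sets and argue that the sequences of intermediate maxima for the two pairs agree (because the tails $F_1\setminus A=F_1'\setminus A'$ and $F_2\setminus A=F_2'\setminus A'$ coincide); that step is precisely the non-trivial content and is missing. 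The paper instead bypasses the decomposition entirely: it applies Proposition~\ref{prop9} to $F_1,F_2$ (for $\beta$) and to their partners $G_1,G_2$ (for $\alpha$), and observes that $F_1\setminus A=F_1'\setminus A'$, $F_2\setminus A=F_2'\setminus A'$, $G_1\setminus G_2=G_1'\setminus G_2'$, $G_2\setminus G_1=G_2'\setminus G_1'$, so the head-dependent binomial terms cancel identically in the two differences. That cancellation is the argument you need to supply, and your sketch does not contain it.
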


\begin{proof}
Let $A$ be the head of $F_1$ and $F_2$, $A'$ be the head of $F'_1$ and $F'_2$ and let $\max F_1=\max F_2=q$. Then $\max F_2=\max F'_2=q+1.$ It is easy to see that $F_1\setminus A=F'_1\setminus A'$ and $F_2\setminus A=F'_2\setminus A'$, by  Proposition \ref{prop9}, we conclude that $\beta(F_1, F_2)=\beta(F'_1, F'_2)$. Let $G_1, G_2, G'_1, G'_2$ be the partners of $F_1, F_2, F'_1, F'_2$ respectively. Then $G_1\setminus G_2=G'_1\setminus G'_2$ and $G_2\setminus G_1=G'_2\setminus G'_1$, by Proposition \ref{prop9}, we have $\alpha(F_1, F_2)=\alpha(F'_1, F'_2)$, as promised.
\end{proof}

\begin{clm}\label{clm21}
Let $F, H, G\in \mathbb{R}_i$ with $F\prec H\prec G$. Then $\alpha(F, G)=\alpha(F, H)+\alpha(H, G)$ and $\beta(F, G)=\beta(F, H)+\beta(H, G)$.
\end{clm}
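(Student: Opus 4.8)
The plan is to observe that, by the very definitions (\ref{eq6}) and (\ref{eq7}), each of $\alpha$ and $\beta$ is the difference of a single set-function evaluated at its two arguments, so the asserted additivity is nothing more than telescoping. Concretely, for $R\in\mathcal{R}$ write $T_R$ for the (unique) partner of $R$, and set $\varphi(R):=|\mathcal{L}([n],R,k_i)|$ and $\psi(R):=\sum_{j\ne i}|\mathcal{L}([n],T_R,k_j)|$. These are well-defined nonnegative integers, and (\ref{eq6})--(\ref{eq7}) say precisely that $\alpha(R,R')=\varphi(R')-\varphi(R)$ and $\beta(R,R')=\psi(R)-\psi(R')$ for any $R\prec R'$.

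Next I would apply this identity to the three pairs $(F,H)$, $(H,G)$, $(F,G)$. For $\alpha$,
\[
\alpha(F,H)+\alpha(H,G)=\bigl(\varphi(H)-\varphi(F)\bigr)+\bigl(\varphi(G)-\varphi(H)\bigr)=\varphi(G)-\varphi(F)=\alpha(F,G),
\]
and for $\beta$,
\[
\beta(F,H)+\beta(H,G)=\bigl(\psi(F)-\psi(H)\bigr)+\bigl(\psi(H)-\psi(G)\bigr)=\psi(F)-\psi(G)=\beta(F,G),
\]
which is exactly the claim. The hypothesis $F\prec H\prec G$ is used only to guarantee that the symbols $\alpha(F,H),\alpha(H,G),\alpha(F,G)$ (and their $\beta$-analogues) are meaningful in the sense in which they were introduced, namely as measuring the variation of $f_i$ along an increasing chain of $\mathcal{R}$; no further property of the order is needed.

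There is essentially no genuine obstacle here; the only point requiring a line of care is that the intermediate terms $\varphi(H)$ and $\psi(H)$ that appear with opposite signs are literally the same, which holds because the partner of a set is unique, so $T_H$ is unambiguous. The same telescoping argument in fact gives additivity of $\alpha$ and $\beta$ along any finite chain $R_1\prec R_2\prec\cdots\prec R_\ell$ in $\mathcal{R}$, and it is in this iterated form that the claim will be invoked in the proofs of Lemmas \ref{clm3}, \ref{clm28}, \ref{clm29} and \ref{clm30}.
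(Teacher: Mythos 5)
Your proof is correct and is essentially the same as the paper's: both arguments simply expand $\alpha$ and $\beta$ via the defining formulas (\ref{eq6})--(\ref{eq7}) and telescope; your introduction of $\varphi$ and $\psi$ is a harmless notational repackaging of that computation.
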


\begin{proof}
By (\ref{eq6}), we have
\begin{align*}
\alpha(F, H)+\alpha(H, G)
&=d_i\big(|\mathcal{L}([n], H, k_i)|-|\mathcal{L}([n], F, k_i)|+|\mathcal{L}([n], G, k_i)|-|\mathcal{L}([n], H, k_i)|\big)\\
&=d_i\big(|\mathcal{L}([n], G, k_i)|-|\mathcal{L}([n], F, k_i)|\big)\\
&=\alpha(F, G),
\end{align*}
as desired.
Let $F', H', G'$ be the partners of $F, H, G$ respectively. Then by (\ref{eq7}), we have
\begin{align*}
\beta(F, H)+\beta(H, G)
&=\sum_{j\ne i}d_j\big(|\mathcal{L}([n], F', k_j)|-|\mathcal{L}([n], H', k_j)|+|\mathcal{L}([n], H', k_j)|-|\mathcal{L}([n], G', k_j)|\big)\\
&=\sum_{j\ne i}d_j\big(|\mathcal{L}([n], F', k_j)|-|\mathcal{L}([n], G', k_j)|\big)\\
&=\beta(F, G),
\end{align*}
as desired.
\end{proof}

By Claims \ref{clm20} and \ref{clm21}, the following corollary is obvious.

\begin{corollary}\label{coro22}
Let $c\in [k_i]$ and $F, G, F', G'\in \mathbb{R}_i$. If $F, G$ are $c$-sequential, $F', G'$ are $c$-sequential and $\max F=\max F', \max G=\max G'$, then $\alpha (F, G)=\alpha (F', G')$ and $\beta (F, G)=\beta (F', G')$.
\end{corollary}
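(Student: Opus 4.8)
The statement to prove is Corollary \ref{coro22}: if $c\in[k_i]$, $F,G,F',G'\in\mathcal{R}$ with $F,G$ being $c$-sequential, $F',G'$ being $c$-sequential, and $\max F=\max F'$, $\max G=\max G'$, then $\alpha(F,G)=\alpha(F',G')$ and $\beta(F,G)=\beta(F',G')$.

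The plan is to reduce the general case to the consecutive step case handled by Claim \ref{clm20}, using the additivity from Claim \ref{clm21}. First I would observe that since $F,G$ are $c$-sequential, there is a chain $F = F_0 \overset{c}{\prec} F_1 \overset{c}{\prec} \cdots \overset{c}{\prec} F_r = G$ inside $\mathcal{R}$ running through all the intermediate $c$-sequential sets with heads equal to that of $F$ (equivalently, $\max F_j = \max F + j$ for $j = 0, \dots, r$ where $r = \max G - \max F$). Similarly there is a chain $F' = F'_0 \overset{c}{\prec} F'_1 \overset{c}{\prec} \cdots \overset{c}{\prec} F'_r = G'$; note the number of steps is the same since $r = \max G - \max F = \max G' - \max F'$ by hypothesis. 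One should check these chains actually lie in $\mathcal{R}$: this follows because $\mathcal{R}$ is (by its definition via $R_0 \precneqq R_1 \precneqq \cdots \precneqq R_Z$) the set of all $k_i$-subsets in a lex interval, hence closed under taking lex-intermediate sets, and each $F_j$ lies lex-between $F$ and $G$.

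Next I would apply Claim \ref{clm21} telescopically: $\alpha(F,G) = \sum_{j=0}^{r-1}\alpha(F_j, F_{j+1})$ and $\alpha(F',G') = \sum_{j=0}^{r-1}\alpha(F'_j, F'_{j+1})$, and similarly for $\beta$. Then for each $j$, since $F_j \overset{c}{\prec} F_{j+1}$ and $F'_j \overset{c}{\prec} F'_{j+1}$ with $\max F_j = \max F + j = \max F' + j = \max F'_j$, Claim \ref{clm20} gives $\alpha(F_j, F_{j+1}) = \alpha(F'_j, F'_{j+1})$ and $\beta(F_j, F_{j+1}) = \beta(F'_j, F'_{j+1})$. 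Summing over $j$ yields $\alpha(F,G) = \alpha(F',G')$ and $\beta(F,G) = \beta(F',G')$, as claimed. The degenerate case $F = G$ (so $r = 0$) is trivial since then also $\max F' = \max G'$ forces $F' = G'$ (within a $c$-sequential family the sets are determined by their maxima), and all four quantities are $0$.

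I do not expect any serious obstacle here; this is a bookkeeping corollary. The only point requiring a little care is the verification that the intermediate sets $F_1,\dots,F_{r-1}$ and $F'_1,\dots,F'_{r-1}$ genuinely belong to $\mathcal{R}$ so that Claims \ref{clm20} and \ref{clm21} are applicable — but this is immediate from the fact that $\mathcal{R}$ is an initial-segment-complemented lex interval of $k_i$-sets and each $F_j$ (resp. $F'_j$) is lex-sandwiched between two of its members. Everything else is a direct telescoping argument combining the two previously established claims.
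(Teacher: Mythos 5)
Your proof is correct and fills in exactly the argument the paper leaves implicit: the paper simply states that the corollary is ``obvious'' from Claims \ref{clm20} and \ref{clm21}, and your telescoping decomposition via intermediate $c$-sequential sets, applying Claim \ref{clm20} stepwise and Claim \ref{clm21} for additivity, is precisely the intended reasoning.
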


\begin{clm}\label{clm24}
Let $2\leq c\leq k_i$ and $F, G, H, F_1\in \mathbb{R}_i$ with $F\overset{c}{\prec} G\overset{c}{\prec} H, F\overset{c-1}{\prec}F_1$ and $\max F =q$. Then
\begin{align*}
&\alpha(F, G)=\alpha(F, F_1)+\alpha(G, H),\\
&\beta(F, G)=\beta(F, F_1)+\beta(G, H)+\sum_{j\ne i}d_j{n-(q+2)\choose k_j-(q-k_i+1)}.
\end{align*}
\end{clm}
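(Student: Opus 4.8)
The plan is to compute both identities directly from Proposition \ref{prop9} and the additivity statements in Claims \ref{clm20} and \ref{clm21}, by tracking exactly how the head sets and the partner sets change as we move one step along a $c$-sequential or $(c-1)$-sequential chain. Recall the setup: $F\overset{c}{\prec}G\overset{c}{\prec}H$ means $F,G,H$ share a common head $A$ of size $k_i-c$, and if $\max F=q$ then $F=A\sqcup\{q-c+1,\dots,q\}$, $G=A\sqcup\{q-c+2,\dots,q+1\}$, $H=A\sqcup\{q-c+3,\dots,q+2\}$. The step $F\overset{c-1}{\prec}F_1$ uses the head $A\sqcup\{q-c+1\}$ of size $k_i-c+1$, so $F_1=A\sqcup\{q-c+1\}\sqcup\{q-c+3,\dots,q+1\}$; note $\max F_1=q+1=\max G$, and indeed $F$ and $F_1$ differ only in that $q$ has been replaced by $q+1$, just as $F$ and $G$ differ only in that $q-c+1$ has been replaced by $q+1$.

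For the $\alpha$-identity: by Corollary \ref{coro22}, $\alpha$ depends only on $\max F$ and the common value $\max G=\max H-1$... more precisely on the pair $(\max F,\max G)$ and $(\max G,\max H)$. Since $\max F=q$, $\max F_1=q+1$ and $\max G=q+1$, $\max H=q+2$, we have $\alpha(F,F_1)$ is the ``increment from $q$ to $q+1$'' and $\alpha(G,H)$ is the ``increment from $q+1$ to $q+2$''; summing gives the increment from $q$ to $q+2$, which by Claim \ref{clm21} (applied with the intermediate set $G$) equals $\alpha(F,H)$... but we want $\alpha(F,G)$. Here I must be careful: $\alpha(F,G)=|\mathcal{L}([n],G,k_i)|-|\mathcal{L}([n],F,k_i)|$ counts the $k_i$-sets $X$ with $F\preceq X\prec G$; using Proposition \ref{prop9} (eq.~(\ref{eq1})) for both $|\mathcal{L}([n],F,k_i)|$ and $|\mathcal{L}([n],G,k_i)|$ via their partners, the difference telescopes to a single binomial coefficient. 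The key computation is to verify algebraically, via the partners of $F,F_1,G,H$, that this binomial coefficient equals the sum of the two analogous coefficients for $(F,F_1)$ and $(G,H)$; this is a Pascal-type identity ${n-r\choose s}={n-r-1\choose s-1}+{n-r-1\choose s}$ applied at the right place, exactly as in the proof of Proposition \ref{clm23}.

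For the $\beta$-identity: by Proposition \ref{prop9} (eq.~(\ref{eq2})), each $|\mathcal{L}([n],\cdot,k_j)|$ is a sum over the elements of $F,F_1,G,H$ themselves (not their partners). Writing $F=\{a_1,\dots,a_{k_i}\}$ with $a_{k_i}=q$, and similarly for $F_1,G,H$, and subtracting, most terms cancel because $F,F_1,G,H$ agree on the head $A$; what survives is a small number of binomial coefficients coming from the last one or two coordinates. Computing $\beta(F,G)$, $\beta(F,F_1)$, $\beta(G,H)$ each as such a short sum and then forming $\beta(F,G)-\beta(F,F_1)-\beta(G,H)$, I expect everything to collapse to $\sum_{j\ne i}{n-(q+2)\choose k_j-(q-k_i+1)}$ after one application of Pascal's rule; the shift in the head size from $k_i-c$ to $k_i-c+1$ in the $F\overset{c-1}{\prec}F_1$ step is what produces the ``extra'' term, and matching indices correctly is the only subtlety.

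The main obstacle is purely bookkeeping: getting the indices of the binomial coefficients in Proposition \ref{prop9} exactly right for all four sets $F,F_1,G,H$ simultaneously, and in the case $\max F=n$ (where $F$ runs up to $n$ and one must first pass to the truncation via Remarks \ref{coro10} and \ref{coro11}) checking that the same identities persist — just as Proposition \ref{clm23} had to be proved in the two cases $\max F<n$ and $\max F=n$. I would handle the generic case $\max F<n$ in full and then remark that the case $\max F=n$ reduces to it by Remark \ref{coro11}, since truncating does not change any of $\mathcal{L}([n],F,k_i)$, $\mathcal{L}([n],G,k_i)$, $\mathcal{L}([n],H,k_i)$ or the corresponding partner families.
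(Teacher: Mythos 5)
Your approach — compute everything directly from Proposition~\ref{prop9} and verify the two identities by binomial-coefficient algebra — is a genuinely different route from the paper's and would indeed work. The paper never computes $\alpha(F,G)$, $\alpha(F,F_1)$, or $\beta(F,G)$ explicitly; instead it introduces two helper sets $F_2$ (the set with $F_2<G$ in $\mathcal{R}$, namely $A\cup\{q-c+1,n-c+2,\dots,n\}$) and $F_3$ (with $F_3<H$, namely $A\cup\{q-c+2,n-c+2,\dots,n\}$), observes that $F_1,F_2$ and $G,F_3$ are both $(c-1)$-sequential with matching max-values, applies Corollary~\ref{coro22} to conclude $\alpha(F_1,F_2)=\alpha(G,F_3)$ and $\beta(F_1,F_2)=\beta(G,F_3)$, and then telescopes via Claim~\ref{clm21}, with Proposition~\ref{clm23} supplying the single error term $\beta(F_2,G)-\beta(F_3,H)$. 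This yields both identities simultaneously and without evaluating any $\alpha$ or $\beta$. Your route is more computational; it buys explicitness (one actually gets, e.g., $\alpha(F,G)=\binom{n-q+c-1}{c-1}$) at the cost of more index bookkeeping and doesn't reuse the invariance machinery the paper builds.

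Two inaccuracies in the sketch are worth flagging. First, $F$ and $F_1$ do not ``differ only in that $q$ has been replaced by $q+1$'': since $F_1=A\sqcup\{q-c+1\}\sqcup\{q-c+3,\dots,q+1\}$, one has $F\setminus F_1=\{q-c+2\}$ and $F_1\setminus F=\{q+1\}$, so it is $q-c+2$ (not $q$) that gets replaced, unless $c=2$. Second, the collapse in the $\beta$-identity is not ``one application of Pascal's rule.'' Working through Proposition~\ref{prop9} gives $\beta(F,G)-\beta(F,F_1)=\sum_{j\ne i}\binom{n-q+c-2}{k_j-q+k_i-1}$, while $\beta(G,H)$ is a $c$-term sum $\sum_{j\neq i}\sum_{v=n-q-2}^{n-q+c-3}\binom{v}{k_j-q+k_i-2}$; matching these requires a telescoping (hockey-stick) identity, not a single Pascal step. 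Both are fixable, and the overall plan is sound — but as written the proposal stops at ``I expect everything to collapse'' without carrying out the computation, so it is a plan rather than a proof. Your worry about the case $\max F=n$ is also moot here: the hypothesis $F\overset{c}{\prec}G\overset{c}{\prec}H$ already forces $\max F\le n-2$.
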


\begin{proof}
Let $A$ be the head of $F, G, H$. Then by the definition, $F=A\sqcup \{q-c+1,\dots, q\}, G=A\sqcup \{q-c+2,\dots, q+1\}, H=A\sqcup \{q-c+3,\dots, q+2\}$ and $F_1=A\sqcup \{q-c+1\}\sqcup \{q-c+3,\dots, q+1\}$. Define $F_2$ as $F_2<G$ in $\mathbb{R}_i$. Since $G\setminus A$ continues and $q-c+1\not\in G$, then $q-c+1\in F_2$. Hence, $F_2=A\cup \{q-c+1, n-c+2, n-c+3, \dots, n\}$. Similarly, define $F_3$ as $F_3<H$ in $\mathbb{R}_i$. Then $F_3=A\sqcup \{q-c+2, n-c+2, n-c+3, \dots, n\}$. Moreover, $F_1$ and $F_2$ are $(c-1)$-sequential, and $G$ and $F_3$ are $(c-1)$-sequential. Clearly, $\max F_1=\max G=q+1$ and $\max F_2=\max F_3=n$. By Corollary \ref{coro22}, we have $\alpha(F_1, F_2)=\alpha(G, F_3)$ and $\beta(F_1, F_2)=\beta(G, F_3)$. By the definition, we get $\alpha(F_2, G)=\alpha(F_3, H)=1$. Combining with Claim \ref{clm21}, we have
\begin{align*}
\alpha(F, G)
&=\alpha(F, F_1)+\alpha(F_1, G)\\
&=\alpha(F, F_1)+\alpha(F_1, F_2)+\alpha(F_2, G)\\
&=\alpha(F, F_1)+\alpha(G, F_3)+\alpha(F_3, H)\\
&=\alpha(F, F_1)+\alpha(G, H),
\end{align*}
and
\begin{align*}
\beta(F, G)
&=\beta(F, F_1)+\beta(F_1, G)\\
&=\beta(F, F_1)+\beta(F_1, F_2)+\beta(F_2, G)\\
&=\beta(F, F_1)+\beta(G, F_3)+\beta(F_3, H)+\beta(F_2, G)-\beta(F_3, H)\\
&=\beta(F, F_1)+\beta(G, H)+\sum_{j\ne i}d_j{n-(q+2)\choose k_j-(q-k_i+1)},
\end{align*}
where the last equality follows from Proposition \ref{clm23}. More specifically, we can see that
\begin{align*}
&\beta(F_2, G)=\sum_{j\ne i}d_j{n-(q+1)\choose k_j-(q+1-k_i)}, \\
&\beta(F_3, H)=\sum_{j\ne i}d_j{n-(q+2)\choose k_j-(q+2-k_i)}.
\end{align*}
\end{proof}

\begin{definition}
Let $M\geq 2$ and $\mathcal{G}=\{G_1, G_2, \dots, G_M\}\subset \mathbb{R}_i$ with $G_1\prec G_2 \prec \dots \prec G_M$. If there is $g\in [0, M-1]$ satisfying the following two conditions:\\
(i) $f(G_{j+1})< f(G_j)$ for $1\leq j\leq g$,\\
(ii)  $f(G_{j+1})\geq f(G_j)$ for $g+1\leq j\leq M-1$,\\
then we say that $\mathcal{G}$ is a down-up family and $g$ is the down degree of $\mathcal{G}$, write $d_{\mathcal{G}}^{\downarrow}$.
\end{definition}
Recall that $i\in [t]$ is the fixed index satisfying $|\mathcal{A}_i|\geq {n-1\choose k_i-1}$.
Let $$l=\max_{j\ne i} k_j.$$

\subsection{Proof of Lemma \ref{clm3}}

To show Lemma \ref{clm3}, we need the following preparations. All arguments below are under the assumption of Lemma \ref{clm3}, i.e., assume that $c\in [k_i]$ and $F, G, H\in \mathbb{R}_i$ with $F\overset{c}{\prec} G\overset{c}{\prec} H$ satisfying $\alpha(F, G)\geq\beta(F, G)$.  We need to show that $\alpha(G, H)>\beta(G, H)$.

\begin{clm}\label{clm26.1}
Let $c'\in[k_i]$ and $R, R', T\in \mathbb{R}_i$ with $R\precneqq T\precneqq R'$. If $R, R'$ are $c'$-sequential, then $\max T\geq \max R+1$.
\end{clm}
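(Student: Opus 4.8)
The statement to prove is Claim~\ref{clm26.1}: if $R, R'\in\mathcal{R}$ are $c'$-sequential and $R\precneqq T\precneqq R'$ with $T\in\mathcal{R}$, then $\max T\geq \max R+1$. The plan is to argue by contradiction: suppose $\max T\leq \max R$. Since $R, R'$ are $c'$-sequential with a common head $A$ (of size $k_i-c'$) and $R'$ is obtained from $R$ by sliding the last $c'$ coordinates up by one, every element strictly between $R$ and $R'$ in the lex order must look, on its first $k_i-c'$ coordinates, very much like $A$; I would first pin down exactly what $T$ can be.

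First I would write $R = A\sqcup\{a+1,\dots,a+c'\}$ and $R' = A\sqcup\{a+2,\dots,a+c'+1\}$ where $a\geq\max A$, so $\max R = a+c'$ and $\max R' = a+c'+1$. Since $R\precneqq T$, by definition of lex order (using that all three sets have the same size $k_i$, so the ``$\supset$'' clause is irrelevant) we have $\min(T\setminus R)<\min(R\setminus T)$; and since $T\precneqq R'$ we have $\min(R'\setminus T)<\min(T\setminus R')$. The key observation is that $R$ and $R'$ agree on $A$ and differ only in that $R$ contains $a+1$ while $R'$ contains $a+c'+1$, with $\{a+2,\dots,a+c'\}$ common to both. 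From $R\prec T$ I would deduce that $T$ must contain all of $A$ and also $a+1$ (otherwise the smallest element of symmetric difference would already force $T\succ R'$ or $T = $ something too large); more precisely, since $T\prec R'$ and $R'\supseteq A\sqcup\{a+2,\dots,a+c'+1\}$, the set $T$ must agree with $R'$ on an initial segment and then have a smaller element — but the only place $R$ and $R'$ diverge below $a+c'+1$ is the slot occupied by $a+1$ in $R$. Combining these, $T$ must contain $A\sqcup\{a+1,\dots,a+c'\}=R$ on its first $k_i$ smallest... no — rather, $T$ must contain $A$, must contain $a+1$, and since $|T|=k_i=|A|+c'$, the remaining $c'-1$ elements of $T$ are all $>a+1$. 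Now if $\max T\leq \max R = a+c'$, all those remaining elements lie in $\{a+2,\dots,a+c'\}$, which has exactly $c'-1$ elements, forcing $T = A\sqcup\{a+1,\dots,a+c'\} = R$, contradicting $R\precneqq T$.

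The main obstacle I anticipate is handling the degenerate/boundary cases cleanly: namely when $R$ has $\max R = n$ (so that in $\mathcal{R}$ the ``next'' set wraps, as in Remark~\ref{coro11}), and the interaction with the convention that in $\mathcal{R}$ consecutive elements under $\overset{c'}{\prec}$ need not be lex-consecutive. I would need to check that the above element-chasing is not disrupted when $a+c'+1 > n$ or when the head $A$ is empty ($c' = k_i$). In those cases I would fall back on the explicit description of $\mathcal{R} = \{R_0,\dots,R_Z\}$ from~\eqref{eq-R} and Remark~\ref{clm12}: elements of $\mathcal{R}$ beyond $R_1$ all have the form (small initial part)$\,\cup\,[n-k_i+2,n]$ or similar, so ``$\max$'' is either $n$ or determined by a short prefix, and the monotonicity of $\max$ along $\mathcal{R}$ can be read off directly. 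I expect the clean way to finish is: the map $R\mapsto\max R$ is nondecreasing along $\mathcal{R}$ in lex order, and is \emph{strictly} increasing precisely at the steps that are not of the ``slide within a fixed head'' type; since $T$ lies strictly between two sets $R\precneqq R'$ with $\max R' = \max R+1$, and $T\neq R$, the only possibility consistent with $\max$ being nondecreasing and $T\ne R'$ is $\max T = \max R+1$ — but I only need the inequality $\max T\geq\max R+1$, which follows since $\max T\geq\max R$ always (monotonicity) and $\max T = \max R$ would force $T=R$ by the counting argument above.
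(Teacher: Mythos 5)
Your core idea — sandwich $T$ between $R$ and $R'$ in lex order to conclude $A\subseteq T$, then exploit that $R\setminus A$ is a block of consecutive integers ending at $\max R$ — is exactly the paper's, and your counting argument in the middle is sound once its premises are established. But the way you propose to \emph{finish}, by invoking that the map $R\mapsto\max R$ is nondecreasing along $\mathcal{R}$ in lex order, rests on a false claim: already $\max R_0=n$ while $\max R_1=k_i+1<n$, and more generally $\max$ drops each time the sliding tail resets after reaching $n$ (e.g.\ from $\{2,\dots,k_i,n\}$ to $\{2,\dots,k_i-1,k_i+1,k_i+2\}$). So that fallback would not go through and cannot be used to patch boundary cases.

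Two further points. First, the paper avoids your need to establish $a+1\in T$ and the ``remaining elements $>a+1$'' property (which you assert without justification): once $A\subseteq T$ is known, $R\setminus T\subseteq R\setminus A=\{a+1,\dots,a+c'\}$ gives $\min(R\setminus T)\geq a+1$; then $R\precneqq T$ gives $\min(T\setminus R)>\min(R\setminus T)\geq a+1$, and since $\min(T\setminus R)\notin R\supseteq\{a+1,\dots,a+c'\}$, necessarily $\min(T\setminus R)>a+c'=\max R$, whence $\max T>\max R$. Second, you hard-code $R'=A\sqcup\{a+2,\dots,a+c'+1\}$, i.e.\ that $R'$ is the immediate successor of $R$ in the $c'$-sequential family; the claim is also applied to non-adjacent members of a $c'$-sequential family (e.g.\ in the proof of Claim~\ref{clm26.11}), and the argument just described covers that automatically since it only uses $A\subseteq R\cap R'$ and that $R\setminus A$ is a tail interval, whereas yours as written does not.
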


\begin{proof}
Let $A$ be the head of $R$ and $R'$. Since $R\precneqq T\precneqq R'$, we have $A\subset T$. Since $\min R\setminus T< \min T\setminus R$ and $R\setminus A$ continues to $\max R$, we have $\max T>\max R$.
\end{proof}

Let $A$ be the head of $F, G, H$ and $\max F=q$. Then $F=A\sqcup \{q-c+1, \dots, q\}, G=A\sqcup \{q-c+2, \dots, q+1\}$ and $ H=A\sqcup \{q-c+3, \dots, q+2\}$.

Let $x\in [n]$ be the smallest integer such that $\sum_{j\ne i}d_j{n-x\choose k_j-(x-k_i)}\leq d_i$.
Since our assumption  that  if $n=k_i+k_j$ holds for all $j\in [t]\setminus \{i\}$, then  $\sum_{j\ne i}d_j<d_i$, $x$ is well-defined.
 By (ii) and (iii) of  Proposition \ref{clm23}, we can see that for all $y$ with $x< y\leq n$, we have $\sum_{j\ne i}d_j{n-y\choose k_j-(y-k_i)}<\sum_{j\ne i}d_j{n-(y-1)\choose k_j-(y-1-k_i)}\leq d_i$.
If $x\leq k_i+2$, then since $\max H\geq k_i+3$, by Proposition \ref{clm23} and the definition of $x$, we get $\alpha(G, H)>\beta(G, H)$, as desired. Thus, we next assume that $x\geq k_i+3$. On the other hand,
since $\sum_{j\ne i}d_j{n-(k_i+l+1)\choose k_j-l-1}=0< d_i$, then
$x\leq \min \{k_i+l+1, n \}$.
We have the following claim.

\begin{clm}\label{clm26.2}
If $q\geq x-1$, then $\alpha(G, H)>\beta(G, H)$.
\end{clm}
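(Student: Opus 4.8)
The claim asserts that if the maximal element $q=\max F$ is already as large as $k_i+l-1$ (so that $F,G,H$ live near the top of the lex order), then the desired strict inequality $\alpha(G,H)>\beta(G,H)$ holds \emph{unconditionally}, i.e. we do not even need the hypothesis $\alpha(F,G)\ge\beta(F,G)$. The plan is to compute both sides directly using Corollary~\ref{coro22} (which lets us replace $G\overset{c}{\prec}H$ by any $c$-sequential pair with the same top elements) together with Proposition~\ref{clm23} for $\beta$ and Proposition~\ref{prop9} for $\alpha$, and then compare the two binomial expressions termwise. Since $q\ge k_i+l-1$ and $l=\max_{j\ne i}k_j$, the key numerical fact will be that $k_j-(q-k_i)\le k_j-(l-1)\le 1$ for every $j\ne i$, so each summand of $\beta(G,H)$ is a binomial coefficient $\binom{n-q-1}{k_j-(q+1-k_i)}$ with a very small (or negative) lower index, hence equals $0$ or $1$; meanwhile $\alpha(G,H)$ counts at least one new $k_i$-set, so $\alpha(G,H)\ge 1$.

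More concretely: first I would apply Corollary~\ref{coro22} to reduce to the case where $G$ and $H$ are the two consecutive $c$-sequential sets immediately \emph{after} we have pushed everything to the top, i.e. it suffices to understand $\alpha$ and $\beta$ as functions of $\max G=q+1$ alone. By Proposition~\ref{clm23}, $\beta(G,H)=\sum_{j\ne i}\binom{n-(q+2)}{k_j-(q+2-k_i)}$ (taking $\max H=q+2$), and by the same proposition the ``weight'' of $\alpha$ is governed by the partner structure. Next I would show $\alpha(G,H)\ge 1$: since $G\precneqq H$ are distinct $k_i$-sets in $\mathcal{R}$, there is at least one set $F'$ with $G\precneqq F'\preceq H$, so $|\mathcal{L}([n],H,k_i)|>|\mathcal{L}([n],G,k_i)|$. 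Then I would bound $\beta(G,H)$: because $q\ge k_i+l-1$ we have $q+2-k_i\ge l+1>k_j$ for all $j\ne i$ unless equality is tight, and a careful look at the edge cases ($q+2-k_i=k_j$ or $q+2-k_i=k_j-1$) shows each term is at most... wait, I need to be careful here — if $k_j-(q+2-k_i)$ can be $0$ the term is $1$ and there could be $t-1$ such terms. This is exactly where the hypothesis ``$n>k_1+k_2$ or $t>2$'' and the structure of $\mathcal{R}$ (all $R\in\mathcal{R}$ have $\min R\le m$) must be used to either push $q$ strictly higher or to show $\alpha(G,H)$ is correspondingly larger than $t-1$.

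The main obstacle I anticipate is precisely this boundary regime where $q+2-k_i$ is equal to or just below some $k_j$, making several $\beta$-terms equal to $1$ and forcing $\alpha(G,H)$ to be strictly larger than their sum. I expect to handle it by the following observation: when $q$ is this large, the head $A$ of $F,G,H$ is forced (by $F\in\mathcal{R}$, hence $\min F\le m$ and $|F|=k_i$) to consume many of the elements $\{1,\dots,q\}$, and the partner $T$ of $G$ then has $|T|$ large, so each replacement $G\rightsquigarrow H$ changes $\mathcal{L}([n],\cdot,k_i)$ by a \emph{sum} of several binomial coefficients (one per ``gap'' in $A$), dominating the at most $t-1$ unit contributions to $\beta$. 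I would make this quantitative by writing $\alpha(G,H)=\sum_{d}\binom{n-y_d}{k_i-d}$ over the gap structure as in the proof of Proposition~\ref{prop9}, noting the largest such term already exceeds $\sum_{j\ne i}\binom{n-q-2}{k_j-q-2+k_i}$ because $n-y_d\ge n-q\ge k_i+l+1$ gives a binomial coefficient that is at least $t-1$ whenever more than one family is present, and invoking $n>k_1+k_2$ or $t>2$ to rule out the lone degenerate configuration. Once the inequality $\alpha(G,H)>\beta(G,H)$ is established in this top regime, Claim~\ref{clm26.2} is proved and, combined with the complementary case $q<k_i+l-1$ (to be treated separately), will yield Lemma~\ref{clm3}.
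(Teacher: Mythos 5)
Your core intuition is right — when $q\geq k_i+l-1$ the lower indices in the $\beta$-sums go negative and everything vanishes, and you correctly note that the hypothesis $\alpha(F,G)\geq\beta(F,G)$ is not needed in this sub-case — but the plan then goes astray for two reasons. First, Proposition~\ref{clm23} computes $\beta$ only for a pair of \emph{consecutive} elements of $\mathcal{R}$, whereas $G\overset{c}{\prec}H$ typically has many elements of $\mathcal{R}$ strictly between them; you must telescope $\beta(G,H)=\beta(G,T_1)+\beta(T_1,T_2)+\cdots+\beta(T_\lambda,H)$ over the intermediate sets $G<T_1<\cdots<T_\lambda<H$ in $\mathcal{R}$ (Claim~\ref{clm21}) before Proposition~\ref{clm23} can be applied to each consecutive link.

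Second, the ``edge case'' that drives the rest of your plan does not exist. Claim~\ref{clm26.1} gives $\max T_j\geq\max G+1=q+2$ for every intermediate $T_j$, and also $\max H=q+2$. Since $q\geq k_i+l-1$ and $l=\max_{j\ne i}k_j$, we have $\max T-k_i\geq q+2-k_i\geq l+1>l\geq k_j$ for every $j\ne i$, so each lower index $k_j-(\max T-k_i)$ is at most $-1$ — strictly negative, never $0$. Every binomial is therefore exactly $0$, giving $\beta(G,H)=0$ outright, while $\alpha(G,H)>0$ trivially. There is no regime where several $\beta$-terms equal $1$, so the gap-structure estimate of $\alpha$ and the appeal to $n>k_1+k_2$ or $t>2$ are entirely unnecessary; those hypotheses are used elsewhere in the proof of Lemma~\ref{clm3}, but they play no role in Claim~\ref{clm26.2}.
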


\begin{proof}
Since $q\geq x-1$, we have $\max G\geq x$ and $\max H\geq x+1$. Let $G<T_1<T_2<\cdots<T_{\lambda}<H$ in $\mathbb{R}_i$. By Claim \ref{clm26.1}, $\max T_j\geq x+1$ for all $j\in [\lambda]$. By (ii) and (iii) of  Proposition \ref{clm23}, $\beta(G, T_1)$, $\beta(T_1, T_2)$, $\cdots$, $\beta(T_{\lambda}, H)$ are  smaller than $d_i$. On the other hand, $\alpha(G, T_1)$, $\alpha(T_1, T_2)$, $\cdots$, $\alpha(T_{\lambda}, H)$ are  $d_i$.
Note that,
\[ \alpha(G, H)=\alpha(G, T_1)+\alpha(T_1, T_2)+\cdots+\alpha(T_{\lambda}, H),\]
and
\[ \beta(G, H)=\beta(G, T_1)+\beta(T_1, T_2)+\cdots+\beta(T_{\lambda}, H). \]
So we conclude that $\alpha(G, H)>\beta(G, H)$.
\end{proof}

By Claim \ref{clm26.2}, we may assume that $k_i+1\leq q \leq x-2$ (Recall that $x\geq k_i+3$).
We will show Lemma \ref{clm3} by induction on $c$.

Let $c=1$. Then $\alpha(F, G)=d_i$. Since $q\leq x-2$, then $\max G\leq x-1<n$. By Proposition \ref{clm23} and the definition of $x$, $\beta (F, G)\geq\sum_{j\ne i}{n-(x-1)\choose k_j-(x-1-k_i)}> d_i$, then $\alpha(F, G)< \beta(F, G)$. So Lemma \ref{clm3} holds for $c=1$. Let $c\geq 2$.
\begin{equation}\label{induction}
\text{ Assume it holds for all $c'\leq c-1$, we will prove that it holds for $c$. }
 \end{equation}
  We will define $c_1, c_2, \dots, c_h$ and  $t_1, t_2, \dots, t_h$, one by one, until $t_1+t_2+\cdots+t_h=x-q-1$, where $h$ is to be determined later.

Let $t_0=0, F_0^+=F$ and $c_0=c$. We determine $c_1$ first.
\begin{clm}\label{clm26.3}
There exists a unique integer $c_1\in[1, c_0-1]$ satisfying the following two conditions.\\
(i) If $F_1$ satisfies $F_0^+\overset{c_1}{\prec} F_1$, then $\alpha(F_0^+, F_1)<\beta(F_0^+, F_1)$;\\
(ii) For any $1\leq j\leq c_0-c_1$ and $F'$ satisfying $F_0^+\overset{c_1+j}{\prec} F'$, we have
$\alpha(F_0^+, F')\geq \beta(F_0^+, F')$.
\end{clm}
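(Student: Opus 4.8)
\textbf{Proof plan for Claim \ref{clm26.3}.}
The plan is to analyze how the quantity $\alpha(F_0^+, F') - \beta(F_0^+, F')$ behaves as a function of the jump parameter $c'$, where $F'$ is defined by $F_0^+ \overset{c'}{\prec} F'$ and $c'$ ranges over $[1, c_0-1]$. Intuitively, when $c'$ is small the family only advances by a tiny amount in the lex order of $\mathcal{A}_i$, so $\alpha(F_0^+, F')$ is small, whereas $\beta$ (the drop in the sizes of the partner families $\mathcal{A}_j$, $j\ne i$) is comparatively large because the partner sets shrink substantially; thus $\alpha < \beta$. When $c'$ is as large as $c_0 - 1$ we are in the regime where, by the hypothesis $\alpha(F,G)\ge\beta(F,G)$ of Lemma \ref{clm3} together with Claim \ref{clm24}, the inequality $\alpha\ge\beta$ holds. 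So the first step is to show that the sign of $\alpha(F_0^+, F') - \beta(F_0^+, F')$ is monotone (or at least changes sign exactly once) as $c'$ decreases from $c_0-1$ to $1$; the threshold value where it flips is the desired $c_1$, and its uniqueness follows from the single sign change.

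The key computational input will be Proposition \ref{prop9} and Proposition \ref{clm23}, which give closed-form binomial expressions for $\alpha$ and $\beta$ in terms of the head $A$, the top element $q$, and the jump size. First I would write, for each candidate $c'$, explicit formulas: $\alpha(F_0^+, F')$ counts the $k_i$-sets strictly between $F_0^+$ and $F'$ in lex order, and by Proposition \ref{prop9} applied to the partners this is a sum of binomial coefficients $\binom{n-\text{(something)}}{k_i - \cdots}$; similarly $\beta(F_0^+, F') = \sum_{j\ne i}(\text{difference of partner sizes})$, evaluated via Proposition \ref{prop9}. Using the telescoping identities of Claim \ref{clm21} (and the refined version Claim \ref{clm24}) I would express the increment obtained when $c'$ increases by one as a difference of two quantities, one contributing to $\alpha$ and one to $\beta$, and then compare them. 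The assumption $k_i + 1 \le q \le k_i + l - 2$ (already reduced to by Claims \ref{clm26.1} and \ref{clm26.2}) is what keeps $\beta$ from vanishing and makes the comparison meaningful; the assumption $n > k_1 + k_2$ or $t > 2$ guarantees $\beta$ is large enough (strictly positive with the right slack) for the strict inequalities.

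For condition (ii) I would argue as follows: for any $c'$ with $c_1 < c' \le c_0 - 1$, and any $j$ with $1 \le j \le c_0 - c'$, the set $F'$ with $F_0^+ \overset{c'+j}{\prec} F'$ satisfies $\alpha(F_0^+, F') \ge \beta(F_0^+, F')$. Here I expect to use the induction hypothesis of Lemma \ref{clm3} on the smaller jump sizes $c' \le c_0 - 1 = c - 1$, combined with Claim \ref{clm24} which relates the jump of size $c$ to a jump of size $c-1$ plus a controlled extra term $\sum_{j\ne i}\binom{n-(q+2)}{k_j-(q-k_i+1)}$. Concretely, Claim \ref{clm24} lets me peel off one unit of the jump at a time and bootstrap from the base hypothesis $\alpha(F,G)\ge\beta(F,G)$; the point is that once $\alpha - \beta \ge 0$ for some jump size it stays $\ge 0$ for all larger jump sizes up to $c_0 - 1$, which is exactly a monotonicity statement in the spirit of the "local convexity" lemmas.

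The main obstacle I anticipate is bookkeeping the binomial identities so that the single-sign-change property is actually verified rather than merely plausible: one must show that the discrete function $c' \mapsto \alpha(F_0^+, F') - \beta(F_0^+, F')$ has nonnegative "second difference" (convexity), or directly that once it becomes nonnegative it remains nonnegative. This is where the hypotheses $n > k_1 + k_2$ or $t > 2$ and $q \le k_i + l - 2$ must be used carefully, since at the boundary cases the slack disappears and the inequality can degenerate to equality — which would destroy uniqueness of $c_1$ or the strictness in condition (i). I would handle this by isolating the dominant binomial term in the difference and showing it strictly outweighs the rest under these hypotheses, deferring the routine term-by-term comparison to a short computation.
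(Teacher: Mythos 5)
The paper's proof of this claim is dramatically simpler than what you propose, and your plan relies on a monotonicity/single-sign-change property that is neither needed nor clearly provable with the tools you cite.

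What the paper actually does: it computes, via Proposition \ref{clm23} and $q \le k_i + l - 2$, that at $c'=1$ one has $\alpha(F_0^+,F') = 1 < \beta(F_0^+,F')$, so the set $S = \{c' \in [1, c_0-1] : \alpha(F_0^+,F') < \beta(F_0^+,F')\ \text{where}\ F_0^+\overset{c'}{\prec}F'\}$ is non-empty; it then sets $c_1 := \max S$. Condition (i) holds because $c_1 \in S$; condition (ii) holds automatically: for $j$ with $c_1 + j \in [c_1+1, c_0-1]$, maximality of $c_1$ gives $\alpha \ge \beta$, and for $j = c_0 - c_1$ (so $c_1 + j = c_0 = c$) the hypothesis $\alpha(F,G)\ge\beta(F,G)$ of Lemma \ref{clm3} applies directly since $F_0^+=F$ and $F\overset{c}{\prec}G$. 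Uniqueness is then a one-line formality: if $c_1 < c_1'$ both satisfied (i) and (ii), applying (ii) for $c_1$ with $j = c_1' - c_1$ contradicts (i) for $c_1'$. No induction, no Claim \ref{clm24}, and no claim about how $\alpha-\beta$ behaves on the interior of $[1,c_0]$ is ever made or needed.

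The genuine gap in your plan is the step you yourself flag as "the main obstacle": establishing that $c'\mapsto \alpha(F_0^+,F')-\beta(F_0^+,F')$ changes sign at most once. This is not implied by Claim \ref{clm24}: that claim gives $\alpha(F,G)-\beta(F,G) = [\alpha(F,F_1)-\beta(F,F_1)] + [\alpha(G,H)-\beta(G,H)] - \sum_{j\ne i}\binom{n-(q+2)}{k_j-(q-k_i+1)}$, which relates a jump of size $c$ from $F$ to a jump of size $c-1$ from $F$ plus a jump of size $c$ from $G$, and the sign of the middle term is uncontrolled at this point. Nor does the induction hypothesis of Lemma \ref{clm3} help here, because it concerns advancing within a fixed $c$-sequential family, not comparing across jump sizes $c'$ from a fixed start $F_0^+$. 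You have, in effect, inverted the logic of the argument: the "once $\alpha-\beta \ge 0$ it stays $\ge 0$ for larger $c'$" statement is not a lemma to be proved before defining $c_1$; it is a free consequence of choosing $c_1$ to be the largest element of $S$. The only substantive inputs are the two endpoint facts ($\alpha<\beta$ at $c'=1$, $\alpha\ge\beta$ at $c'=c_0$), and you do correctly identify both of those.
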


\begin{proof}
Let $F'$ be the set  such that $F_0^+\overset{1}{\prec}F'$, i.e., $F_0^+<F'$. Since $q\leq x-2$, $\max F'\leq x-1$. By Proposition \ref{clm23} and the definition of $x$,
\begin{align*}
\beta(F_0^+, F')= \sum_{j\ne i}d_j{n-( q+1)\choose k_j-(q+1-k_i)}> d_i=\alpha(F_0^+, F').
\end{align*}
Note that $F\overset{c}{\prec} G$ and $\alpha(F, G)\geq \beta(F, G)$. Let $c_1$ be the largest integer in $[1, c_0-1]$ satisfying $\alpha(F_0^+, F')< \beta(F_0^+, F')$ for $F'$ satisfying $F_0^+\overset{c_1}{\prec}F'$. Then $c_1$ satisfies both (i) and (ii).
\end{proof}

Define $\mathcal{F}_0^+$ to be the $c_1$-sequential family that range from $q$ to $n$ with $F_0^+$ as it's first member. Since $c_1<c$, by induction hypothesis and the definition of down-up family, we can see that $\mathcal{F}_0^+$  is a down-up family. Let $t_1=:d_{\mathcal{F}_0^+}^{\downarrow}$. Clearly, $1\leq t_1\leq x-q-1$ (in view of Claim \ref{clm26.2}). If $t_1=x-q-1$, then we stop and $h=1$. Otherwise, if $t_1\leq x-q-2$, then we continue to  find $c_2$ and $t_2$. Before performing the next step, we  give the following definitions.

Let $\mathcal{F}_0^+=:\{F_0^+, F_1, M_2^{(1)}, M_3^{(1)}, \dots, M_{t_1}^{(1)}, M_{t_2}^{(1)}, \dots, G_1\}$, where
$$F_0^+\overset{c_1}{\prec} F_1\overset{c_1}{\prec} M_2^{(1)}\overset{c_1}{\prec} M_3^{(1)}\overset{c_1}{\prec} \cdots\overset{c_1}{\prec} M_{t_1}^{(1)}\overset{c_1}{\prec} M_{t_1+1}^{(1)}\overset{c_1}{\prec} \cdots \overset{c_1}{\prec}G_1.$$
Let $M_1^{(1)}=:F_1$ and $\max G_1=n$. Actually, $\max M_{t_1}^{(1)}=q+t_1$.

Since $d_{\mathcal{F}_0^+}^{\downarrow}=t_1$ and $\alpha(M_{t_1}^{(1)}, M_{t_1+1}^{(1)})\geq \beta(M_{t_1}^{(1)}, M_{t_1+1}^{(1)})$,
 then we can define $F_1^+, F_2^+, \dots, F_{t_1}^+, F_2, F_3, \dots, F_{t_1}$ as follows:
$$F_0^+\overset{c_1+1}{\prec} F_1^+\overset{c_1+1}{\prec} F_2^+\overset{c_1+1}{\prec} \cdots\overset{c_1+1}{\prec} F_{t_1}^+,$$
and $$F_1^+\overset{c_1}{\prec} F_2,\,\, F_2^+\overset{c_1}{\prec} F_3,\,\, \dots,\, \,F_{t_1-1}^+\overset{c_1}{\prec} F_{t_1}.$$

For $0\leq k \leq h$, let $a_k:=\sum_{j=0}^kt_j$. Let $2\leq p\leq h$. Assume that $c_1, c_2, \dots, c_{p-1}$ and $t_1, t_2, \dots, t_{p-1}$ have been determined and the condition to terminate is not reached, that is, $t_{p-1}<x-q-a_{p-2}-1$. We next determine $c_p$.

\begin{clm}\label{clm26.4}
There exists a unique integer $c_p$, $1\leq c_p\leq c_{p-1}-1$, satisfying the following two conditions.\\
(i) If $F_{a_{p-1}}^+\overset{c_p}{\prec} F_{a_{p-1}+1}$, then
$\alpha(F_{a_{p-1}}^+, F_{a_{p-1}+1})< \beta(F_{a_{p-1}}^+, F_{a_{p-1}+1})$;\\
(ii) For any $1\leq j\leq c_{p-1}-c_p$ and $F'$ satisfying $F_{a_{p-1}}^+\overset{c_p+j}\prec F'$, we have
$$\alpha(F_{a_{p-1}}^+, F')\geq\beta(F_{a_{p-1}}^+, F').$$
\end{clm}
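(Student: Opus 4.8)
\textbf{Proof proposal for Claim \ref{clm26.4}.}

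The plan is to mimic exactly the argument used to establish Claim \ref{clm26.3}, which is the case $p=1$ of the present statement (with $a_0 = t_0 = 0$, so $F_{a_0}^+ = F_0^+$ and $c_{p-1} = c_0 = c$). The only structural difference is that now the ``base inequality'' playing the role of $\alpha(F,G)\ge\beta(F,G)$ is no longer the hypothesis of Lemma \ref{clm3} directly, but rather the inequality $\alpha(M_{a_{p-1}}^{(p-1)}, M_{a_{p-1}+1}^{(p-1)}) > \beta(M_{a_{p-1}}^{(p-1)}, M_{a_{p-1}+1}^{(p-1)})$, i.e. the ``up-step'' at the down-degree of the previously constructed down-up family $\mathcal{F}_{a_{p-2}}^+$ (the family that is $c_{p-1}$-sequential starting at $F_{a_{p-2}}^+$). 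The termination condition $t_{p-1} < k_i + l - q$ guarantees $\max F_{a_{p-1}}^+ \le k_i + l - 1 < n$, so there is still room for the construction, and in particular $c_{p-1} \ge 2$ (if $c_{p-1} = 1$ the $c_{p-1}$-sequential family consists of single-step moves and, by the $c=1$ analysis in the proof of Lemma \ref{clm3}, would be strictly increasing everywhere, contradicting $t_{p-1}\ge 1$).

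First I would record that for the set $F'$ with $F_{a_{p-1}}^+ \overset{1}{\prec} F'$ (i.e. $F_{a_{p-1}}^+ < F'$ in $\mathcal{R}$), one has $\alpha(F_{a_{p-1}}^+, F') = 1$ by definition, while by Proposition \ref{clm23},
\[
\beta(F_{a_{p-1}}^+, F') = \sum_{j\ne i}{n - (\max F_{a_{p-1}}^+ + 1)\choose k_j - (\max F_{a_{p-1}}^+ + 1 - k_i)} \ge \sum_{j\ne i}{n - (k_i+l-1)\choose k_j - (l-1)} > 1,
\]
using $\max F_{a_{p-1}}^+ + 1 \le k_i + l - 1$ and the fact that the $j$ with $k_j = l$ contributes a binomial coefficient ${n-(k_i+l-1)\choose 1} = n - k_i - l + 1 \ge 1$ (strictly $>1$ since $n \ge k_1 + k_2$ and we are in the case $n > k_1 + k_2$ or $t > 2$). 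Hence condition (i) holds for $c_p = 1$: the set of admissible values $\{c' \in [1, c_{p-1}-1] : \alpha(F_{a_{p-1}}^+, F'') < \beta(F_{a_{p-1}}^+, F'') \text{ where } F_{a_{p-1}}^+ \overset{c'}{\prec} F''\}$ is non-empty. Define $c_p$ to be the \emph{largest} element of this set. Then (i) holds by construction, and (ii) holds because for every $c'$ with $c_p < c' \le c_{p-1} - 1$, maximality of $c_p$ forces $\alpha(F_{a_{p-1}}^+, F'') \ge \beta(F_{a_{p-1}}^+, F'')$ for the set $F''$ with $F_{a_{p-1}}^+ \overset{c'}{\prec} F''$; and by Claim \ref{clm24} (applied inductively, stepping the sequentiality parameter down from $c'$ to $c'-1$ and peeling off the extra $\beta$-term, which is nonnegative) together with Claim \ref{clm21}, this inequality for the single step propagates to $\alpha(F_{a_{p-1}}^+, F') \ge \beta(F_{a_{p-1}}^+, F')$ for any $F'$ reached by a $(c_p + j)$-sequential step with $1 \le j \le c_{p-1} - c_p$. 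Uniqueness of $c_p$ is immediate since it is defined as a specific extremal index.

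The main obstacle I anticipate is the bookkeeping needed to make ``this inequality for the single step propagates'' rigorous: one must argue that if $\alpha(R, R') < \beta(R, R')$ fails (i.e. $\ge$ holds) for the immediate $c'$-sequential successor, then it continues to fail for the whole $c'$-sequential block, and conversely extract from the first strict down-step the definition of the down-degree $t_p$. This is precisely where Claims \ref{clm24}, \ref{clm20}, \ref{clm21} and Corollary \ref{coro22} are used to compare $\alpha$ and $\beta$ across blocks of different sequentiality parameters, and where the induction hypothesis of Lemma \ref{clm3} (for all $c' \le c - 1$, hence in particular for $c_p < c_{p-1} \le c$) is invoked to know that the $c_p$-sequential family starting at $F_{a_{p-1}}^+$ is genuinely down-up, so that $t_p = d^{\downarrow}$ is well-defined. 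Everything else is a routine transcription of the $p = 1$ case.
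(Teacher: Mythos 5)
Your overall plan matches the paper's: verify $\alpha<\beta$ at the $1$-step (forcing the admissible set to be non-empty), define $c_p$ as the largest $c'\in[1,c_{p-1}-1]$ for which the $c'$-step from $F_{a_{p-1}}^+$ is a strict down-step, and then read off (i) and (ii). You also correctly identify the ``up-step at the down-degree'' of $\mathcal{F}_{a_{p-2}}^+$ as the base inequality that must replace the hypothesis of Lemma \ref{clm3}. However, two of your supporting steps are not right.

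First, the justification for $c_{p-1}\ge 2$ is backwards. The $c=1$ analysis in the proof of Lemma \ref{clm3} shows that a $1$-step with $\max\le k_i+l-1$ has $\alpha=1<\beta$, i.e.\ is a strict \emph{down}-step, not an up-step. So a $1$-sequential family is not ``strictly increasing everywhere''; rather, it is strictly decreasing until $\max$ reaches $k_i+l$. The correct contradiction (and the one the paper uses) is that if $c_{p-1}=1$, the down-degree $t_{p-1}$ would equal $k_i+l-q-a_{p-2}$, which would trigger termination, contradicting $p-1<h$.

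Second, and more seriously, your verification of condition (ii) has a genuine gap at the endpoint $j=c_{p-1}-c_p$, i.e.\ the $c_{p-1}$-step from $F_{a_{p-1}}^+$. Maximality of $c_p$ inside $[1,c_{p-1}-1]$ only gives $\alpha\ge\beta$ for $c'\in[c_p+1,\,c_{p-1}-1]$; it says nothing about $c'=c_{p-1}$. Your attempted repair via Claim \ref{clm24} does not work: Claim \ref{clm24} gives $\alpha(F,G)-\beta(F,G)=(\alpha(F,F_1)-\beta(F,F_1))+(\alpha(G,H)-\beta(G,H))-(\text{nonneg})$ for $F\overset{c}{\prec}G$ and $F\overset{c-1}{\prec}F_1$, so knowing $\alpha(F,F_1)\ge\beta(F,F_1)$ (the $(c-1)$-step) does not let you conclude $\alpha(F,G)\ge\beta(F,G)$ (the $c$-step) without also controlling $\alpha(G,H)-\beta(G,H)$. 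The paper handles this endpoint directly with Claim \ref{clm20}: $F_{a_{p-1}}^+$ and $M_{a_{p-1}}^{(p-1)}$ share the same maximum $q+a_{p-1}$, and both are the first set of a $c_{p-1}$-sequential pair, so the $\alpha$ and $\beta$ values of their $c_{p-1}$-steps coincide; since $t_{p-1}=d^{\downarrow}_{\mathcal{F}_{a_{p-2}}^+}$, the step $M_{a_{p-1}}^{(p-1)}\overset{c_{p-1}}{\prec}M_{a_{p-1}+1}^{(p-1)}$ satisfies $\alpha\ge\beta$, which transfers to the $F^+$-side. You need this explicit transfer, not a propagation argument.
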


As Claim \ref{clm26.3}, after the $(p-1)$-th step, we have defined the following family:  \\
\[ \mathcal{F}_{a_{p-2}}^+=\{F_{a_{p-2}}^+, F_{a_{p-2}+1}, M_{a_{p-2}+2}^{(p-1)}, \dots, M_{a_{p-1}}^{(p-1)}, M_{a_{p-1}+1}^{(p-1)}, \dots, G_{a_{p-2}+1}\}, \]
where the sets of $\mathcal{F}_{a_{p-2}}^+$ satisfy
$$F_{a_{p-2}}^+\overset{c_{p-1}}{\prec} F_{a_{p-2}+1}\overset{c_{p-1}}{\prec} M_{a_{p-2}+2}^{(p-1)}\overset{c_{p-1}}{\prec} \cdots\overset{c_{p-1}}{\prec}
M_{a_{p-1}}^{(p-1)}\overset{c_{p-1}}{\prec}
M_{a_{p-1}+1}^{(p-1)}\overset{c_{p-1}}{\prec} \cdots
\overset{c_{p-1}}{\prec} G_{a_{p-2}+1}.$$
Define $F_{a_{p-2}+1}^+, \,F_{a_{p-2}+2}^+,\, \dots, \,F_{a_{p-1}}^+, \,F_{a_{p-2}+1}, \,F_{a_{p-2}+2}, \,\dots, \,F_{a_{p-1}}$ as follows:
$$F_{a_{p-2}}^+\overset{c_{p-1}+1}{\prec}F_{a_{p-2}+1}^+\overset{c_{p-1}+1}{\prec} F_{a_{p-2}+2}^+\overset{c_{p-1}+1}{\prec} \cdots\overset{c_{p-1}+1}{\prec} F_{a_{p-1}}^+,$$
and
$$F_{a_{p-2}}^+\overset{c_{p-1}}{\prec} F_{a_{p-2}+1},\,\, F_{a_{p-2}+1}^+\overset{c_{p-1}}{\prec} F_{a_{p-2}+2},\,\, \dots,\,\, F_{a_{p-1}-1}^+\overset{c_{p-1}}{\prec} F_{a_{p-1}}.$$

\begin{proof}[Proof of Claim \ref{clm26.4}]
First, we can see that $c_{p-1}\geq 2$. Since if not, that is, $c_{p-1}=1$, then $t_{p-1}=d_{\mathcal{F}_{a_{p-2}}^+}^{\downarrow}=x-q-a_{p-2}-1$. On the other hand, since $p-1<h$, we have $t_{p-1}<x-q-a_{p-2}-1$, a contradiction. Let $F'$ be the set satisfying $F_{a_{p-1}}^+\overset{1}{\prec} F'$. Then $\alpha(F_{a_{p-1}}^+, F')< \beta(F_{a_{p-1}}^+, F')$ by Proposition \ref{clm23}. Let $F'$ be the set satisfying $F_{a_{p-1}}^+\overset{c_{p-1}}{\prec} F'$. Since $M_{a_{p-1}}^{(p-1)}\overset{c_{p-1}}{\prec}M_{a_{p-1}+1}^{(p-1)}$ and $\max F_{a_{p-1}}^+=\max M_{a_{p-1}}^{(p-1)}=q+a_{p-1}$, by Claim \ref{clm20},
\begin{align*}
\alpha(F_{a_{p-1}}^+, F')
=\alpha(M_{a_{p-1}}^{(p-1)}, M_{a_{p-1}+1}^{(p-1)})
\geq\beta(M_{a_{p-1}}^{(p-1)}, M_{a_{p-1}+1}^{(p-1)})
=\beta(F_{a_{p-1}}^+, F').
\end{align*}
Let $c_p$ be the maximum integer in $[1, c_{p-1}-1]$ such that if $F_{a_{p-1}}^+\overset{c_p}{\prec} F_{a_{p-1}+1}$, then
$$\alpha(F_{a_{p-1}}^+, F_{a_{p-1}+1})< \beta(F_{a_{p-1}}^+, F_{a_{p-1}+1}).$$
Then $c_p$ satisfies both $(i)$ and $(ii)$.
\end{proof}

After $h$ steps, we can get $x-q$ sets, namely, $F_j$ for $ 1\leq j\le x-q$. We also defined $G_1, G_{a_1+1}, G_{a_2+1}, \dots, G_{a_{h-1}+1}$ as $F_1\overset{c_1}{\longrightarrow}G_1, F_{a_1+1}\overset{c_2}{\longrightarrow}G_{a_1+1}, \dots, F_{a_{h-1}+1}\overset{c_h}{\longrightarrow}G_{a_{h-1}+1}$. For each $1\leq j\leq h$ and $a_{j-1}+1\leq p\leq a_j$, we now define $G_p$ as $F_p \overset{c_j}{\longrightarrow}G_p$.

\begin{clm}\label{clm26.5}
If $c_1+1=c$, then $\alpha(G, H)>\beta(G, H)$.
\end{clm}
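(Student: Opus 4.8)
The plan is to obtain the claim as a short bookkeeping consequence of Claim~\ref{clm24} together with the defining property of $c_1$ recorded in Claim~\ref{clm26.3}. Assume $c_1+1=c$. Since Claim~\ref{clm26.3} gives $c_1\in[1,c_0-1]=[1,c-1]$, we have $c_1\ge 1$, hence $c=c_1+1\ge 2$, so Claim~\ref{clm24} is applicable. Let $F_1\in\mathcal{R}$ be the set with $F\overset{c-1}{\prec}F_1$; equivalently $F_0^+\overset{c_1}{\prec}F_1$ since $F_0^+=F$ and $c_1=c-1$. Writing $F=A\sqcup\{q-c+1,\dots,q\}$ as usual, this is the set $F_1=A\sqcup\{q-c+1\}\sqcup\{q-c+3,\dots,q+1\}$, which is precisely the set denoted $F_1$ in Claim~\ref{clm24}. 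By part (i) of Claim~\ref{clm26.3}, the defining inequality for $c_1$ reads
\[
\alpha(F,F_1)<\beta(F,F_1).
\]

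Next I would invoke Claim~\ref{clm24}, which yields
\[
\alpha(F,G)=\alpha(F,F_1)+\alpha(G,H),\qquad
\beta(F,G)=\beta(F,F_1)+\beta(G,H)+\sum_{j\ne i}\binom{n-(q+2)}{k_j-(q-k_i+1)}.
\]
Solving the first identity for $\alpha(G,H)$, then using $\alpha(F,F_1)<\beta(F,F_1)$, then the standing hypothesis $\alpha(F,G)\ge\beta(F,G)$ of Lemma~\ref{clm3}, and finally the second identity, we get
\[
\alpha(G,H)=\alpha(F,G)-\alpha(F,F_1)>\alpha(F,G)-\beta(F,F_1)\ge\beta(F,G)-\beta(F,F_1)=\beta(G,H)+\sum_{j\ne i}\binom{n-(q+2)}{k_j-(q-k_i+1)}.
\]
Since the final sum is a sum of binomial coefficients, it is nonnegative, and therefore $\alpha(G,H)>\beta(G,H)$, which is exactly what Claim~\ref{clm26.5} asserts.

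There is no genuine obstacle in this step; it is essentially an algebraic manipulation of the three quantities $\alpha(F,G)$, $\alpha(F,F_1)$, $\alpha(G,H)$ and their $\beta$-counterparts. The only points deserving a sentence of justification are: that $c\ge 2$ so Claim~\ref{clm24} applies (immediate from $c_1\ge 1$); that the set $F_1$ of Claim~\ref{clm26.3} coincides with the one in Claim~\ref{clm24} (both arise from $F$ by the single replacement of $\{q-c+2,\dots,q\}$ with $\{q-c+3,\dots,q+1\}$ while keeping the element $q-c+1$, and one checks $F\precneqq F_1\precneqq G$ in lex order, so $F_1\in\mathcal{R}$); and that the extra binomial term in the $\beta$-decomposition is $\ge 0$. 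All of these are routine, so the proof will be only a few lines long.
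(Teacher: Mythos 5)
Your proof is correct, and it differs from the paper's in one organizational respect worth noting. The paper re-derives the relevant decomposition inline: it splits $\alpha(F,G)=\alpha(F,F_1)+\alpha(F_1,G_1)+\alpha(G_1,G)$ with $F_1\overset{c_1}{\longrightarrow}G_1$ and $G_1<G$, introduces $\widetilde G$ with $\widetilde G<H$, and invokes Corollary \ref{coro22} to identify $\alpha(F_1,G_1)=\alpha(G,\widetilde G)$ and $\beta(F_1,G_1)=\beta(G,\widetilde G)$ before assembling the final chain. You instead observe that Claim \ref{clm24} already packages exactly this bookkeeping (with $G_1$ playing the role of $F_2$ there) and simply apply it, which collapses the argument to a one-line inequality chain. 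The ingredients are identical in both routes --- the strict inequality $\alpha(F,F_1)<\beta(F,F_1)$ coming from Claim \ref{clm26.3}(i), the standing hypothesis $\alpha(F,G)\ge\beta(F,G)$ of Lemma \ref{clm3}, and nonnegativity of the extra binomial sum --- but yours is shorter because it reuses Claim \ref{clm24} rather than unrolling it. Your version also makes the source of the strictness cleaner: the paper's write-up writes $\alpha(F,F_1)\le\beta(F,F_1)$ where Claim \ref{clm26.3}(i) actually gives $<$, and writes $\alpha(F,G)>\beta(F,G)$ where the hypothesis is $\ge$, then recovers strictness from a separate comparison $\beta(\widetilde G,H)<\beta(G_1,G)$; your proof instead draws the strict step from $\alpha(F,F_1)<\beta(F,F_1)$ alone and only needs the extra binomial term to be nonnegative, which is immediate.
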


\begin{proof}
If $c_1+1=c$, then $F_1^+=G$. Since $F_1\overset{c_1}{\longrightarrow}G_1$, we have $G_1< G$. Then
$$\alpha(F, G)=\alpha(F, F_1)+\alpha(F_1, G_1)+\alpha(G_1, G)$$
and
$$\beta(F, G)=\beta(F, F_1)+\beta(F_1, G_1)+\beta(G_1, G).$$
By the choice of $c_1$ and $t_1\geq 1$, we have $\alpha(F, F_1)< \beta(F, F_1)$. Due to $\max G=q+1$, applying Proposition \ref{clm23}, we have
$$\beta(G_1, G)=\sum_{j\ne i}d_j{n-(q+1)\choose k_j-(q+1-k_i)}.$$
Since $\alpha(F, G)\geq \beta(F, G)$, we get $\alpha(F_1, G_1)+d_i>\beta(F_1, G_1)+\beta(G_1, G)$. Let $\widetilde{G}$ be the set satisfying $\widetilde{G}<H$. Then $G\overset{c_1}{\longrightarrow}\widetilde{G}$. Since $\max G=\max F_1$ and $\max G_1=\max \widetilde{G}$,  by Corollary \ref{coro22},
we obtain $\alpha(F_1, G_1)=\alpha(G, \widetilde{G})$ and $\beta(F_1, G_1)=\beta(G, \widetilde{G})$. Due to $\max H=q+2$, applying Proposition \ref{clm23}, we have
$$\beta(\widetilde{G}, H)=\sum_{j\ne i}d_j{n-(q+2)\choose k_j-(q+2-k_i)}<\beta(G_1, G).$$ So
\begin{align*}
\alpha(G, H)
&=\alpha(G, \widetilde{G})+\alpha(\widetilde{G}, H)\\
&=\alpha(F_1, G_1)+d_i\\
&>\beta(F_1, G_1)+\beta(G_1, G)\\
&>\beta(G, \widetilde{G})+\beta(\widetilde{G}, H)\\
&=\beta(G, H).
\qedhere
\end{align*}
As desired.
\end{proof}
By Claim \ref{clm26.5}, we may assume that $c_1\leq c-2$.
\begin{clm}\label{clm26.6}
Let $0\leq p\leq x-q-2$. Then
$\alpha(F_p^+, F_{p+1}^+)\geq\beta(F_p^+, F_{p+1}^+)$ and $\alpha(F_p^+, F_{p+1})<\beta(F_p^+, F_{p+1}).$
\end{clm}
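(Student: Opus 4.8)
The statement we must establish is Claim~\ref{clm26.6}: for every $0\le p\le k_i+l-q-1$, we have $\alpha(F_p^+,F_{p+1}^+)\ge\beta(F_p^+,F_{p+1}^+)$ and $\alpha(F_p^+,F_{p+1})<\beta(F_p^+,F_{p+1})$. The natural route is an induction that follows the same block-by-block structure used to define the sets $F_p^+,F_p$: the indices $p$ are partitioned into intervals $[a_{j-1}+1,a_j]$ (together with the initial point $p=0$ sitting in the first block), and within the $j$-th block all the relevant steps are $c_j$-sequential (for the ``$F_{p+1}$'' conclusion) or $(c_j+1)$-sequential (for the ``$F_{p+1}^+$'' conclusion). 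So I would induct on the block index $j$, and inside a block induct on $p$.

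\textbf{Base of each block.} At the left endpoint $p=a_{j-1}$ of the $j$-th block, the two inequalities are exactly the defining conditions of $c_j$ recorded in Claim~\ref{clm26.4}(i) (giving $\alpha(F_{a_{j-1}}^+,F_{a_{j-1}+1})<\beta(F_{a_{j-1}}^+,F_{a_{j-1}+1})$, since $F_{a_{j-1}}^+\overset{c_j}{\prec}F_{a_{j-1}+1}$) and Claim~\ref{clm26.4}(ii) with $j=1$ there, i.e. taking the increment $c_j+1\le c_{j-1}$, which yields $\alpha(F_{a_{j-1}}^+,F')\ge\beta(F_{a_{j-1}}^+,F')$ for the set $F'$ with $F_{a_{j-1}}^+\overset{c_j+1}{\prec}F'$; but $F'=F_{a_{j-1}+1}^+$ by construction. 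For $j=1$ this uses Claim~\ref{clm26.3} in place of Claim~\ref{clm26.4}, and it also covers $p=0$.

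\textbf{Propagation inside a block.} For $a_{j-1}+1\le p\le a_j-1$, I want to pass from the pair $(F_{p-1}^+,F_p^+,F_p)$ to $(F_p^+,F_{p+1}^+,F_{p+1})$. Here the key structural facts are: $F_{p-1}^+\overset{c_j+1}{\prec}F_p^+\overset{c_j+1}{\prec}F_{p+1}^+$ and $F_{p-1}^+\overset{c_j}{\prec}F_p$, and $\max F_{p-1}^+=q+p-1$, so Claim~\ref{clm24} (with $c=c_j+1\ge2$ — note $c_j\ge1$ always, and when $c_j=1$ the block degenerates, which is exactly the contradiction extracted at the start of the proof of Claim~\ref{clm26.4}) applies and gives
\begin{align*}
\alpha(F_{p-1}^+,F_p^+)&=\alpha(F_{p-1}^+,F_p)+\alpha(F_p^+,F_{p+1}^+),\\
\beta(F_{p-1}^+,F_p^+)&=\beta(F_{p-1}^+,F_p)+\beta(F_p^+,F_{p+1}^+)+\sum_{j'\ne i}\binom{n-(q+p+1)}{k_{j'}-(q+p-k_i+1)}.
\end{align*}
Subtracting and using the induction hypothesis on $p-1$ (both $\alpha(F_{p-1}^+,F_p^+)\ge\beta(F_{p-1}^+,F_p^+)$ and $\alpha(F_{p-1}^+,F_p)<\beta(F_{p-1}^+,F_p)$) forces $\alpha(F_p^+,F_{p+1}^+)>\beta(F_p^+,F_{p+1}^+)$, in particular $\ge$. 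For the second inequality, observe $F_p^+\overset{c_j}{\prec}F_{p+1}$ and $M^{(j)}_{p}\overset{c_j}{\prec}M^{(j)}_{p+1}$ with $\max F_p^+=\max M^{(j)}_{p}=q+p$; since $\mathcal{F}^+_{a_{j-1}}$ is $c_j$-sequential and $p<a_j=a_{j-1}+t_j$ lies strictly below its down-degree, we have $f(M^{(j)}_{p+1})<f(M^{(j)}_{p})$, i.e. $\alpha(M^{(j)}_{p},M^{(j)}_{p+1})<\beta(M^{(j)}_{p},M^{(j)}_{p+1})$, and then Claim~\ref{clm20} transports this to $\alpha(F_p^+,F_{p+1})<\beta(F_p^+,F_{p+1})$.

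\textbf{Main obstacle.} The delicate point is bookkeeping the \emph{boundary between consecutive blocks}: at $p=a_j$ the relevant sequentiality switches from $c_j$ to $c_{j+1}$, and one must verify that the ``$\ge$'' half just proved at the end of block $j$ is compatible with — indeed is the hypothesis needed to invoke — Claim~\ref{clm26.4}(i)-(ii) that launches block $j+1$. Concretely, $\alpha(F_{a_j}^+,F_{a_j+1}^+)\ge\beta(F_{a_j}^+,F_{a_j+1}^+)$ (which we have, or which is the relabelled statement $\alpha(M^{(j)}_{a_j},M^{(j)}_{a_j+1})\ge\beta(\cdots)$ coming from $d^{\downarrow}_{\mathcal{F}^+_{a_{j-1}}}=t_j$) is precisely what makes the maximality choice of $c_{j+1}$ legitimate. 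I expect the real work to be confirming these index identifications ($F'=F_{a_{j-1}+1}^+$, $\max F_{p-1}^+=q+p-1$, $M^{(j)}_{a_{j-1}}=F_{a_{j-1}}^+$, etc.) are exactly as written in the construction, after which the inequalities fall out of Claims~\ref{clm24},~\ref{clm20},~\ref{clm21} and Proposition~\ref{clm23} with no genuine estimation beyond the positivity of the binomial terms $\binom{n-(q+p+1)}{k_{j'}-(q+p-k_i+1)}$, which holds because $q\le k_i+l-2$ keeps the upper index at least as large as the lower for at least one $j'\ne i$.
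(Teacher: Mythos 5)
Your proposal is correct, and it establishes Claim~\ref{clm26.6}, but via a genuinely different mechanism than the paper for the ``$\ge$'' half. The paper observes that $F_{a_j}^+\overset{c_{j+1}+1}{\prec}F_{a_j+1}^+\overset{c_{j+1}+1}{\prec}\cdots$ is a $(c_{j+1}+1)$-sequential chain with $c_{j+1}+1\le c_1+1<c$ (the assumption $c_1<c-1$ having been secured by Claim~\ref{clm26.5}), so once $\alpha(F_{a_j}^+,F_{a_j+1}^+)\ge\beta(F_{a_j}^+,F_{a_j+1}^+)$ is in hand from Claim~\ref{clm26.4}(ii), the induction hypothesis of Lemma~\ref{clm3} itself propagates the inequality (in fact strictly) down the entire chain in one stroke. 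You instead replay the propagation by hand through the telescoping identity of Claim~\ref{clm24}, subtracting the two displayed identities and invoking the joint induction hypothesis at $p-1$. This is a valid and in some sense more elementary route: it does not lean on the outer induction on $c$, only on the arithmetic content of Claim~\ref{clm24}, Claim~\ref{clm20}, and the down-degree definition. The paper's route is shorter precisely because it outsources the heavy lifting to the induction hypothesis of Lemma~\ref{clm3}; yours is more self-contained inside Claim~\ref{clm26.6}. For the ``$<$'' half both arguments coincide: use the down degree $t_j$ of $\mathcal{F}_{a_{j-1}}^+$ together with Claim~\ref{clm20} to transport $\alpha(M_p^{(j)},M_{p+1}^{(j)})<\beta(M_p^{(j)},M_{p+1}^{(j)})$ to $\alpha(F_p^+,F_{p+1})<\beta(F_p^+,F_{p+1})$, using $\max F_p^+=\max M_p^{(j)}=q+p$.

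One small slip in your formula: with $\max F_{p-1}^+=q+p-1$, the extra term from Claim~\ref{clm24} is $\sum_{j'\ne i}\binom{n-(q+p+1)}{k_{j'}-(q+p-k_i)}$, not $\binom{n-(q+p+1)}{k_{j'}-(q+p-k_i+1)}$ (the lower index of Claim~\ref{clm24} is $k_j-(q'-k_i+1)$ with $q'=\max F$, so substituting $q'=q+p-1$ gives $k_j-(q+p-k_i)$). Since you only use nonnegativity of this term, the off-by-one does not affect the validity of the argument.
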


\begin{proof}
Without loss of generality, assume that $a_j\leq p\leq a_{j+1}-1$ for some $0\leq j\leq h-1$. We next consider the family $\mathcal{F}_{a_j}^+$. Recall that
$$\mathcal{F}_{a_j}^+=\{F_{a_j}^+, F_{a_j+1}, M_{a_j+2}^{(j+1)}, \dots, M_{a_{j+1}}^{(j+1)}, M_{a_{j+1}+1}^{(j+1)}, \dots, G_{a_j+1}\},$$ where
$$F_{a_j}^+\overset{c_{j+1}}{\prec} F_{a_j+1}\overset{c_{j+1}}{\prec} M_{a_j+2}^{(j+1)}\overset{c_{j+1}}{\prec} \cdots\overset{c_{j+1}}{\prec}
M_{a_{j+1}}^{(j+1)}\overset{c_{j+1}}{\prec}
M_{a_{j+1}+1}^{(j+1)}\overset{c_{j+1}}{\prec} \dots
\overset{c_{j+1}}{\prec} G_{a_j+1},$$
and $\max G_{a_j+1}=n.$ We also have the following relations
$$F_{a_j}^+\overset{c_{j+1}+1}{\prec}F_{a_j+1}^+\overset{c_{j+1}+1}{\prec} F_{a_j+2}^+\overset{c_{j+1}+1}{\prec} \cdots\overset{c_{j+1}+1}{\prec} F_{a_{j+1}}^+,$$
$$F_{a_j}^+\overset{c_{j+1}}{\prec} F_{a_j+1},\, F_{a_j+1}^+\overset{c_{j+1}}{\prec} F_{a_j+2},\, \dots,\, F_{a_{j+1}-1}^+\overset{c_{j+1}}{\prec} F_{a_{j+1}}.$$

By the choice of $c_{j+1}$, we have $\alpha(F_{a_j}^+, F_{a_j+1}^+)\geq\beta(F_{a_j}^+, F_{a_j+1}^+)$. Since $F_{a_j}^+\overset{c_{j+1}+1}{\prec}F_{a_j+1}^+$ and $c_{j+1}+1\leq c_1+1< c$, by induction hypothesis (see (\ref{induction})), we have
\begin{equation}\label{eq**}
\alpha(F_{p}^+, F_{p+1}^+)\geq\beta(F_{p}^+, F_{p+1}^+).
\end{equation}
By the definition of $t_{j+1}$, since $F_{a_j}^+\overset{c_{j+1}}{\prec} F_{a_j+1}$,
 we get
\begin{align} \label{eq***}
&\alpha(F_{a_j}^+,  F_{a_j+1})< \beta(F_{a_j}^+,  F_{a_j+1}), \\
&\alpha(F_{a_j+1}, M_{a_j+2}^{(j+1)})< \beta(F_{a_j+1}, M_{a_j+2}^{(j+1)}).\nonumber
\end{align}
Moreover, for each $a_j+2\leq u\leq a_{j+1}-1$, we get
\begin{equation}\label{eq*}
\alpha(M_{u}^{(j+1)}, M_{u+1}^{(j+1)})< \beta(M_{u}^{(j+1)}, M_{u+1}^{(j+1)}).
\end{equation}
Thus Claim \ref{clm26.6} holds for $p=a_j$.

Next we consider $a_j+1\leq p\leq a_{j+1}-1$.
Note that $F_{a_j+1}\overset{c_{j+1}}{\prec} M_{a_j+2}^{(j+1)}, \,F_{a_j+1}^+\overset{c_{j+1}}{\prec} F_{a_j+2}$ and $\max F_{a_j+1}=\max F_{a_j+1}^+$.
Additionally, for $a_j+2\leq p\leq a_{j+1}-1$, we have $M_p^{(j+1)}\overset{c_{j+1}}{\prec}M_{p+1}^{(j+1)}, F_p^+\overset{c_{j+1}}{\prec}F_{p+1}$ and $ \max M_p^{(j+1)}=\max F_p^+$. So Claim \ref{clm20} yields
$$\alpha (F_p^+, F_{p+1})=\alpha(M_p^{(j+1)}, M_{p+1}^{(j+1)})
~~\text{and}~~
\beta (F_p^+, F_{p+1})=\beta(M_p^{(j+1)}, M_{p+1}^{(j+1)}). $$
Hence, for each $a_j+1\leq p\leq a_{j+1}-1$, by (\ref{eq*}), we conclude that
\begin{equation}\label{eq****}
\alpha(F_p^+, F_{p+1})< \beta(F_p^+, F_{p+1}).
\end{equation}
The proof of Claim \ref{clm26.6} is complete.
\end{proof}

\begin{clm}\label{clm26.7}
$\max F_p=\max F_p^+=q+p$ for all $1\leq p\leq x-q-1$.
\end{clm}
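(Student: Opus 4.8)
The plan is to prove Claim~\ref{clm26.7} by a straightforward induction on $p$, resting on the single structural observation that whenever $X\overset{c'}{\prec}Y$ holds in $\mathcal{R}$ for some $c'\in[k_i]$, the sets $X$ and $Y$ share a common head and $Y$ is the immediate successor of $X$ in the associated $c'$-sequential family, so that $\max Y=\max X+1$; crucially, this increment-by-one property does not depend on the value of $c'$. Since each $F_p^+$ and each $F_p$ is introduced, one after another, through exactly one such arrow, the maxima must climb by exactly one at every step.

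For the base case, $F_0^+=F=A\sqcup\{q-c+1,\dots,q\}$ has $\max F_0^+=q$. By Claim~\ref{clm26.3} the down-degree $t_1=d^{\downarrow}_{\mathcal{F}_0^+}\ge 1$, so $F_1^+$ and $F_1$ are genuinely defined, and the construction records $F_0^+\overset{c_1}{\prec}F_1$ and $F_0^+\overset{c_1+1}{\prec}F_1^+$; the observation above then gives $\max F_1=\max F_1^+=q+1$. For the inductive step, assume $\max F_p^+=q+p$ with $1\le p\le k_i+l-q-1$, and let $j$ be the unique index with $a_j\le p\le a_{j+1}-1$. The defining relations of the $(j+1)$-st block supply $F_p^+\overset{c_{j+1}+1}{\prec}F_{p+1}^+$ and $F_p^+\overset{c_{j+1}}{\prec}F_{p+1}$, so $\max F_{p+1}^+=\max F_{p+1}=\max F_p^++1=q+p+1$. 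Here the arrow $\overset{c_{j+1}+1}{\prec}$ is meaningful for $k_i$-uniform sets because $c_{j+1}\le c_1\le c-1\le k_i-1$, hence $c_{j+1}+1\le k_i$; and every max that occurs is at most $q+(k_i+l-q)=k_i+l\le k_1+k_2\le n$, so all arrows stay inside $\mathcal{R}$ with no interference from the top value $n$. When $p=a_{j+1}$, the set $F_{a_{j+1}}^+$ instead plays the role of the first element of block $j+2$ and is acted on by $\overset{c_{j+2}}{\prec}$ and $\overset{c_{j+2}+1}{\prec}$; since the increment is insensitive to the superscript, the same computation applies, and the two descriptions of $F_{a_{j+1}}^+$ agree, which closes the induction.

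The step I expect to need the most care is the bookkeeping of the block structure: for each $p$ one must correctly identify which arrow, and with which $c_{j+1}$, was the one that introduced $F_{p+1}^+$ and $F_{p+1}$, decide whether $p$ is interior to a block or at its right endpoint, and verify that the two a priori different ways of viewing $F_{a_{j+1}}^+$ (as the last $F^+$ of block $j+1$ and as the first of block $j+2$) are consistent. The arithmetic itself---that a single $c'$-step raises $\max$ by one---is immediate from the definition of a $c$-sequential family, so essentially no computation is involved.
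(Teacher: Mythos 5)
Your proof is correct and follows essentially the same approach as the paper's. Both rest on the single observation that a $c'$-step $X\overset{c'}{\prec}Y$ raises the maximum by exactly one regardless of $c'$, and both trace the arrows $F_{p-1}^+\overset{c_{j+1}}{\prec}F_p$, $F_{p-1}^+\overset{c_{j+1}+1}{\prec}F_p^+$ through the block structure; the only organizational difference is that the paper first proves $\max F_p=\max F_p^+$ for all $p$ and then runs the second induction on the block index $j$, whereas you fold both into a single induction on $p$.
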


\begin{proof}
Let $a_j+1\leq p\leq a_{j+1}$ for some $0\leq j\leq h-1$. Then $F_{p-1}^+\overset{c_{j+1}}{\prec} F_p$ and $F_{p-1}^+\overset{c_{j+1}+1}{\prec} F_p$, so
\begin{equation}\label{eq10}
\max F_p=\max F_p^+.
\end{equation}
We next  prove that $\max F_p=q+p$. For $j=0$, then $1\leq p\leq t_1$. Recall that $F_0^+\overset{c_1}{\prec}F_1, F_1^+\overset{c_1}{\prec}F_2, \dots, F_{t_1-1}^+\overset{c_1}{\prec}F_{t_1}$. By (\ref{eq10}), $\max F_0^+=q$ implies $\max F_p=q+p,$ as desired. Assume it holds for all $j'\leq j-1$, we want to prove it holds for $j$. Recall that $F_{a_j}^+\overset{c_{j+1}}{\prec}F_{a_j+1}, \,F_{a_j+1}^+\overset{c_{j+1}}{\prec}F_{a_j+2}, \,\dots, \,F_{a_{j+1}-1}^+\overset{c_{j+1}}{\prec}F_{a_{j+1}}$. By induction hypothesis, $\max F_{a_j}^+=q+a_j$, then $\max F_{a_j+1}=q+a_j+1, \dots, \max F_{a_{j+1}}=q+a_{j+1}$, as desired.
\end{proof}

\begin{clm}\label{clm26.9}
Let $1\leq p\leq x-q-1$. Then $\alpha(F_p, F_p^+)-\beta(F_p, G_p)>\sum_{j\ne i}d_j{n-(q+p)\choose k_j-(q+p-k_i)}$.
\end{clm}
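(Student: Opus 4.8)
The plan is to compare, set by set, the $\alpha$-contribution (growth of $|\mathcal{L}([n],\cdot,k_i)|$) against the $\beta$-contribution (shrinkage of the partner families) along the chain from $F_p$ up to $F_p^+$ and then up to $G_p$. Recall from the construction that $F_{p-1}^+\overset{c_j}{\prec}F_p$ and $F_{p-1}^+\overset{c_j+1}{\prec}F_p^+$ for the appropriate block index $j$, and that $F_p\overset{c_j}{\longrightarrow}G_p$ with $\max G_p=n$; also $\max F_p=\max F_p^+=q+p$ by Claim \ref{clm26.7}. The first step is to telescope $\beta(F_p,G_p)$ using Claim \ref{clm21}: writing the $c_j$-sequential chain $F_p=N_0\overset{c_j}{\prec}N_1\overset{c_j}{\prec}\cdots\overset{c_j}{\prec}N_r=G_p$, we get $\beta(F_p,G_p)=\sum_{u}\beta(N_{u-1},N_u)$, and each term is computed by Proposition \ref{clm23} as $\sum_{j'\ne i}\binom{n-\max N_u}{k_{j'}-(\max N_u-k_i)}$ with $\max N_u=q+p+u$. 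Since these binomial coefficients are decreasing in the top argument once $\max N_u$ is large, the dominant term is the first one, $\sum_{j'\ne i}\binom{n-(q+p+1)}{k_{j'}-(q+p+1-k_i)}$, and $\beta(F_p,G_p)$ is at most a controlled multiple of it; more precisely we should bound $\beta(F_p,G_p)\le \beta(F_{p-1}^+, F_p)$-type quantities already shown $<\alpha$ in Claim \ref{clm26.6}.

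The second step is to get a lower bound on $\alpha(F_p,F_p^+)$. Since $F_p$ and $F_p^+$ share the head of the $(c_j+1)$-sequential family (both have $\max = q+p$ but $F_p^+$ has one more consecutive element at the tail end while $F_p$ does not), passing from $F_p$ to $F_p^+$ in lex order sweeps through all $k_i$-sets whose partners gain exactly the element $q+p+1$; counting these via Proposition \ref{prop9} gives $\alpha(F_p,F_p^+)$ as a sum of binomial coefficients whose largest term is again $\binom{n-(q+p+1)}{k_i-(q+p+1-k_i)}$ or larger. The key inequality to extract is therefore of the shape
\begin{align*}
\alpha(F_p,F_p^+)-\beta(F_p,G_p) \;>\; \sum_{j'\ne i}\binom{n-(q+p)}{k_{j'}-(q+p-k_i)},
\end{align*}
and I would prove it by combining the two telescoped expressions and using the hypotheses $n>k_1+k_2$ or $t>2$ (which guarantee the right-hand side is strictly smaller than the gap) together with the monotonicity $\binom{n-(q+p)}{k_{j'}-(q+p-k_i)} \le \binom{n-(q+p-1)}{k_{j'}-(q+p-1-k_i)}$ of the relevant Pascal-type sums. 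Essentially this is a reindexing: $\alpha(F_p,F_p^+)$ already exceeds $\beta(F_p,F_p^+)$ by Claim \ref{clm26.6} (applied with the $(c_j+1)$-sequential step), and $\beta(F_p,G_p)-\beta(F_p,F_p^+)$ is itself a tail of partner-shrinkages that is absorbed, leaving the stated slack $\sum_{j'\ne i}\binom{n-(q+p)}{k_{j'}-(q+p-k_i)}$.

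The main obstacle I anticipate is the bookkeeping of which block index $j$ governs $p$ and making the telescoping of $\beta(F_p,G_p)$ interact correctly with the inductive inequality $\alpha(F_{p-1}^+,F_p^+)\ge \beta(F_{p-1}^+,F_p^+)$: the chain from $F_p$ to $G_p$ is $c_j$-sequential, whereas the inductively controlled inequalities in Claim \ref{clm26.6} involve both the $c_j$-step and the $(c_j+1)$-step, so one must carefully invoke Claim \ref{clm20} (to move $\alpha,\beta$ values along chains with equal $\max$) and Claim \ref{clm24} (to relate the $c$-step and $(c-1)$-step increments, picking up the correction term $\sum_{j'\ne i}\binom{n-(q+2)}{k_{j'}-(q-k_i+1)}$). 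Getting the exponent shift in that correction term to line up with the claimed right-hand side $\sum_{j'\ne i}\binom{n-(q+p)}{k_{j'}-(q+p-k_i)}$ — i.e., verifying it is the $\max = q+p$ Pascal term and not an off-by-one neighbor — is the delicate point; everything else is routine binomial manipulation.
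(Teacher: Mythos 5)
Your final ``reindexing'' sentence does gesture at the correct argument, but the body of the proposal is both misordered and substantially more complicated than needed, and it misapplies Proposition~\ref{clm23}. The actual proof is a short bookkeeping argument that uses Claim~\ref{clm26.6} and one adjacency observation; it never computes $\alpha(F_p,F_p^+)$ or $\beta(F_p,G_p)$ term by term.

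First, the lex order is $F_p\prec G_p\prec F_p^+$, not $F_p\prec F_p^+\prec G_p$ as you write in the opening sentence: $G_p$ is the terminal set of the $c_j$-sequential chain starting at $F_p$ (so $\max G_p=n$), and the immediate lex successor of $G_p$ in $\mathcal R$ is precisely $F_p^+$. The one observation that does all the work is exactly this adjacency: $G_p<F_p^+$ in $\mathcal R$ with $\max F_p^+=q+p$, so Proposition~\ref{clm23} gives $\beta(G_p,F_p^+)=\sum_{j\ne i}\binom{n-(q+p)}{k_j-(q+p-k_i)}$, which is the claimed right-hand side on the nose. With that in hand, one simply decomposes via Claim~\ref{clm21}:
\begin{align*}
\alpha(F_{p-1}^+,F_p^+)&=\alpha(F_{p-1}^+,F_p)+\alpha(F_p,F_p^+),\\
\beta(F_{p-1}^+,F_p^+)&=\beta(F_{p-1}^+,F_p)+\beta(F_p,G_p)+\beta(G_p,F_p^+),
\end{align*}
and plugs in the two inequalities from Claim~\ref{clm26.6} (applied at index $p-1$): $\alpha(F_{p-1}^+,F_p^+)\ge\beta(F_{p-1}^+,F_p^+)$ and $\alpha(F_{p-1}^+,F_p)<\beta(F_{p-1}^+,F_p)$. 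Subtracting gives $\alpha(F_p,F_p^+)>\beta(F_p,G_p)+\beta(G_p,F_p^+)$, which is the claim. No explicit telescoping of $\beta(F_p,G_p)$ is required, and Claim~\ref{clm24} plays no role.

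Your first-step plan — telescoping $\beta(F_p,G_p)$ along the $c_j$-sequential chain and evaluating each $\beta(N_{u-1},N_u)$ with Proposition~\ref{clm23} — does not work as written: Proposition~\ref{clm23} computes $\beta$ only for $\mathcal R$-adjacent pairs, whereas consecutive members of a $c_j$-sequential chain with $c_j\ge 2$ are generally separated by several $\mathcal R$-steps, so $\beta(N_{u-1},N_u)$ is a sum of several such terms, not the single one you write. Your estimate of $\alpha(F_p,F_p^+)$ via ``sweeping'' sets whose partners gain $q+p+1$ is likewise unnecessary and not cleanly justified. The closing sentence is in the right spirit (it invokes Claim~\ref{clm26.6} and speaks of the $\beta$ slack), but you have a sign slip — it is $\beta(F_p,F_p^+)-\beta(F_p,G_p)=\beta(G_p,F_p^+)$ that equals the stated slack, not $\beta(F_p,G_p)-\beta(F_p,F_p^+)$ — and you describe $\beta(G_p,F_p^+)$ as ``a tail of partner-shrinkages'' when in fact it is a single $\mathcal R$-step, which is precisely what makes the proof short. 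Without explicitly noting that $G_p$ and $F_p^+$ are adjacent in $\mathcal R$, the argument as you've sketched it does not close.
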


\begin{proof}
By Claim \ref{clm26.7}, $\max F_p^+=q+p$. By our definition of $G_p$, $G_p<F_p^+$. Applying Proposition \ref{clm23}, we get
$$
\beta(G_p, F_p^+)=\sum_{j\ne i}d_j{n-(q+p)\choose k_j-(q+p-k_i)}.$$
By Claim \ref{clm26.6}, $\alpha(F_{p-1}^+, F_p^+)\geq \beta(F_{p-1}^+, F_p^+)$ and $\alpha(F_{p-1}^+, F_p)<\beta(F_{p-1}^+, F_p)$. On the other hand, we have
$$\alpha(F_{p-1}^+, F_p^+)=\alpha(F_{p-1}^+, F_p)+\alpha(F_p, F_p^+),$$ and
\begin{align*}
\beta(F_{p-1}^+, F_p^+)
&=\beta(F_{p-1}^+, F_p)+\beta(F_p, G_p)+\beta(G_p, F_p^+)\\
&=\beta(F_{p-1}^+, F_p)+\beta(F_p, G_p)+\sum_{j\ne i}d_j{n-(q+p)\choose k_j-(q+p-k_i)}.
\end{align*}
Thus $\alpha(F_p, F_p^+)-\beta(F_p, G_p)>\sum_{j\ne i}d_j{n-(q+p)\choose k_j-(q+p-k_i)}$.
\end{proof}

Define $H_p$ and $J_p$ for each $1\leq p\leq x-q$, that is, $a_0\leq p\leq a_h+1$ as follows.
$$
J_1=G\overset{c_1+1}{\prec}J_2\overset{c_1+1}{\prec}\cdots\overset{c_1+1}{\prec}
J_{a_1}\overset{c_2+1}{\prec}J_{a_1+1}\overset{c_2+1}{\prec}\cdots\overset{c_2+1}{\prec}J_{a_2}\overset{c_3+1}{\prec}
\cdots\overset{c_h+1}{\prec}J_{a_h}\overset{c_h+1}{\prec}J_{a_h+1},
$$
where the last set $J_{a_h+1}$ exists since Claim \ref{clm26.7} implies that $\max J_{a_h+1}=q+x-q-1+1\leq n$. Let $H_p$ be the set such that $H_p<J_{p+1}$ in $\mathbb{R}_i$.

By the definition of $J_p, 1\leq p\leq x-q$, we get $\max J_p=q+p$. Proposition \ref{clm23} gives
\begin{equation}\label{eq11}
\beta(H_p, J_{p+1})=\sum_{j\ne i}d_j{n-(q+p+1)\choose k_j-(q+p+1-k_i)}.
\end{equation}

\begin{clm}\label{clm26.10}
Let $1\leq p\leq x-q$. Then $\alpha(J_p, H_p)=\alpha(F_p, G_p)$ and $\beta(J_p, H_p)=\beta(F_p, G_p)$.
\end{clm}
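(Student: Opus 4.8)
Claim \ref{clm26.10} asserts that the "shift" quantities $\alpha$ and $\beta$ associated to the pair $(J_p, H_p)$ agree with those of the pair $(F_p, G_p)$. The natural route is to invoke Corollary \ref{coro22}: both pairs are $c'$-sequential for a common value of $c'$, and it remains only to check that their maxima match. So the plan is, first, to pin down which sequentiality constant governs each pair; second, to verify the "same head-step and same last element" bookkeeping that Corollary \ref{coro22} requires; third, to read off the conclusion.

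First I would unwind the definitions. For $a_{j-1}+1 \le p \le a_j$, the set $F_p^+$ was defined via $F_{p-1}^+ \overset{c_j+1}{\prec} F_p^+$ (inside the chain $F_{a_{j-1}}^+ \overset{c_j+1}{\prec} F_{a_{j-1}+1}^+ \overset{c_j+1}{\prec}\cdots$), while $G_p$ is defined by $F_p \overset{c_j}{\longrightarrow} G_p$, i.e.\ $G_p$ is the last element of the $c_j$-sequential family with head equal to that of $F_p$ and $\max G_p = n$. On the $J$-side, $J_p$ sits in the $(c_j+1)$-sequential chain starting at $J_1 = G$, and $H_p$ is defined by $H_p < J_{p+1}$ in $\mathcal R$, which (by the usual "$H_p = (J_p \setminus \{\max J_p\}) \cup \{\max J_p + 1, \dots, n\}$"-type description, cf.\ the proof of Claim \ref{clm23}, Case (ii)) makes $H_p$ the terminal element of the $c_j$-sequential family with the same head as $J_p$ reaching up to $n$. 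Thus both $(F_p, G_p)$ and $(J_p, H_p)$ are pairs of the form (a set, its "$c_j$-sequential completion to $n$").

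Next I would check the hypotheses of Corollary \ref{coro22} (applied to the pair "a set and its completion to $n$", which one can split as finitely many consecutive $c_j$-sequential steps and handle via Claims \ref{clm20}, \ref{clm21}). The key numerical facts are: $\max F_p = \max F_p^+ = q+p$ by Claim \ref{clm26.7}, and $\max J_p = q+p$, which was recorded just before the statement of Claim \ref{clm26.10} ("$\max J_p = q+p$"). Hence $\max F_p = \max J_p$. Moreover $\max G_p = \max H_p = n$ by construction. Since $F_p$ and $G_p$ are $c_j$-sequential, $J_p$ and $H_p$ are $c_j$-sequential, and the two pairs share both endpoints' maxima, Corollary \ref{coro22} gives $\alpha(F_p, G_p) = \alpha(J_p, H_p)$ and $\beta(F_p, G_p) = \beta(J_p, H_p)$, which is exactly the claim. (Strictly, one should note that $F_p$ and $J_p$ need not have the same head as sets, only the same $\max$ and the same sequentiality parameter $c_j$; but that is precisely the generality in which Corollary \ref{coro22} is stated — it depends only on $\max F, \max G$ and $c$, not on the heads themselves.)

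The main obstacle, I expect, is not conceptual but organizational: making sure the value of $c'$ is genuinely the same for both pairs on the relevant block $a_{j-1}+1 \le p \le a_j$, and that the definition "$F_p \overset{c_j}{\longrightarrow} G_p$" together with "$H_p < J_{p+1}$" really do produce $c_j$-sequential families reaching $n$ with the asserted maxima — this requires carefully threading through the nested definitions of $F_p^+, F_p, G_p, J_p, H_p$ across the $h$ steps and citing Claim \ref{clm26.7} and the "$\max J_p = q+p$" observation at the right moments. Once the sequentiality parameter and the two maxima are matched, Corollary \ref{coro22} closes the argument immediately.
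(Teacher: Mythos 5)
Your proposal is correct and follows essentially the same route as the paper: identify that $(J_p,H_p)$ and $(F_p,G_p)$ are $c_j$-sequential pairs for the same $c_j$ on the relevant block, match $\max J_p=\max F_p=q+p$ (Claim \ref{clm26.7} and the observation preceding the claim) and $\max H_p=\max G_p=n$, then invoke Corollary \ref{coro22}. The paper's proof is just a more terse version of exactly this argument.
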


\begin{proof}
By Claim \ref{clm26.7} and $\max J_p=q+p$, we have $\max J_p=\max F_p$. Trivially, $\max H_p=\max G_p=n$. By our definition, $J_p$ and $H_p$ are $c_y$-sequential for some $y$, and $F_p$ and $G_p$ are $c_y$-sequential as well. It follows from Corollary \ref{coro22} that $\alpha(J_p, H_p)=\alpha(F_p, G_p)$ and $\beta(J_p, H_p)=\beta(F_p, G_p)$.
\end{proof}

Accordingly,
\begin{align}\label{eq12}
\alpha(J_p, J_{p+1})-\beta(J_p, H_p)
&=\alpha(J_p, H_p)+d_i-\beta(F_p, G_p) \nonumber \\
&=\alpha(F_p, G_p)+d_i-\beta(F_p, G_p)\nonumber \\
&=\alpha(F_p, F_p^+)-\beta(F_p, G_p)\nonumber \\
&>\sum_{j\ne i}d_j{n-(q+p)\choose k_j-(q+p-k_i)},
\end{align}
where the first and second equalities hold by Claim \ref{clm26.10} and the last inequality holds by Claim \ref{clm26.9}.
Furthermore,
\begin{align}\label{eq13}
\alpha(J_p, J_{p+1})-\beta(J_p, J_{p+1})
&=\alpha(J_p, J_{p+1})-\beta(J_p, H_p)-\beta(H_p, J_{p+1})\nonumber \\
&=\alpha(J_p, J_{p+1})-\beta(J_p, H_p)-\sum_{j\ne i}d_j{n-(q+p+1)\choose k_j-(q+p+1-k_i)}\nonumber \\
&>\sum_{j\ne i}d_j\left[{n-(q+p)\choose k_j-(q+p-k_i)}-{n-(q+p+1)\choose k_j-(q+p+1-k_i)}\right],
\end{align}
where the second equality holds by (\ref{eq11}) and the last inequality holds by (\ref{eq12}).

Let $J_{n-q}$ be the set such that $J_{a_h+1}\overset{c_h+1}{\longrightarrow}J_{n-q}$. In particular, if $n=x+1$, then $J_{n-q}=J_{a_h+1}$.

\begin{clm}\label{clm26.11}
Let $1\leq p\leq x-q-1$. Then $$\alpha(J_p, J_{n-q})-\beta(J_p, J_{n-q})>\sum_{j\ne i}d_j{n-(q+p)\choose k_j-(q+p-k_i)}.$$
\end{clm}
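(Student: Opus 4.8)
\textbf{Proof plan for Claim \ref{clm26.11}.}
The plan is to prove the inequality by downward induction on $p$, running from $p = k_i+l-q$ down to $p=1$, telescoping along the chain $J_p \overset{c_x+1}{\prec} J_{p+1} \overset{c_x+1}{\prec} \cdots \overset{c_h+1}{\prec} J_{a_h+1} \overset{c_h+1}{\longrightarrow} J_{n-q}$ constructed just above. The engine of the induction is the per-step estimate (\ref{eq13}), which says
$$\alpha(J_p, J_{p+1}) - \beta(J_p, J_{p+1}) > \sum_{j\ne i}\left[{n-(q+p)\choose k_j-(q+p-k_i)} - {n-(q+p+1)\choose k_j-(q+p+1-k_i)}\right],$$
valid for each $p$ in the range. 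Combining this with Claim \ref{clm21} (additivity of $\alpha$ and $\beta$ along $J_p \prec J_{p+1} \prec J_{n-q}$) reduces the inductive step to showing that the right-hand sides telescope: the negative binomial term coming from step $p$ is exactly cancelled by the positive leading term of the inductive hypothesis at $p+1$.

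Concretely, the base case is $p = k_i+l-q$. Here $J_{p+1} = J_{a_h+1}$ and $J_{n-q}$ is obtained from $J_{a_h+1}$ by the relation $J_{a_h+1}\overset{c_h+1}{\longrightarrow}J_{n-q}$, so $\max J_{n-q}=n$ and every intermediate set $T$ with $J_{a_h+1} \prec T \prec J_{n-q}$ has $\max T = n$ as well (this needs the analogue of Claim \ref{clm26.1}, i.e.\ that sets strictly between two $c$-sequential sets have maximum strictly larger than the first one; combined with $\max J_{a_h+1} = k_i+l+1 > k_i+l$ and Proposition \ref{clm23} this forces the relevant $\beta$-increments to vanish). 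Hence $\beta(J_{a_h+1}, J_{n-q}) = 0$ while $\alpha(J_{a_h+1}, J_{n-q}) \ge 0$, and together with (\ref{eq13}) at $p = k_i+l-q$ we get
$$\alpha(J_p, J_{n-q}) - \beta(J_p, J_{n-q}) = \big(\alpha(J_p,J_{p+1}) - \beta(J_p, J_{p+1})\big) + \alpha(J_{p+1}, J_{n-q}) > \sum_{j\ne i}\left[{n-(q+p)\choose k_j-(q+p-k_i)} - {n-(q+p+1)\choose k_j-(q+p+1-k_i)}\right] \ge 0,$$
and since each binomial term $\binom{n-(q+p+1)}{k_j-(q+p+1-k_i)}$ is nonnegative, the stronger bound $\alpha(J_p,J_{n-q}) - \beta(J_p,J_{n-q}) > \sum_{j\ne i}\binom{n-(q+p)}{k_j-(q+p-k_i)}$ follows once we observe the dropped terms are $\ge 0$; care is needed here, so I would instead keep the full telescoping sum and only discard nonnegative leftover terms at the very end.

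For the inductive step, assume the claim holds for $p+1$, i.e.\ $\alpha(J_{p+1}, J_{n-q}) - \beta(J_{p+1}, J_{n-q}) > \sum_{j\ne i}\binom{n-(q+p+1)}{k_j-(q+p+1-k_i)}$. Applying Claim \ref{clm21} to $J_p \prec J_{p+1} \prec J_{n-q}$ and then (\ref{eq13}):
$$\alpha(J_p, J_{n-q}) - \beta(J_p, J_{n-q}) = \big(\alpha(J_p, J_{p+1}) - \beta(J_p, J_{p+1})\big) + \big(\alpha(J_{p+1}, J_{n-q}) - \beta(J_{p+1}, J_{n-q})\big),$$
and bounding the first bracket below by (\ref{eq13}) and the second by the inductive hypothesis, the two terms $\sum_{j\ne i}\binom{n-(q+p+1)}{k_j-(q+p+1-k_i)}$ — one with a minus sign from (\ref{eq13}), one with a plus sign from the hypothesis — cancel exactly, leaving $\alpha(J_p, J_{n-q}) - \beta(J_p, J_{n-q}) > \sum_{j\ne i}\binom{n-(q+p)}{k_j-(q+p-k_i)}$, as required. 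The main obstacle I anticipate is bookkeeping rather than mathematical depth: one must make sure the chain of $J$'s really does reach from $J_p$ to $J_{n-q}$ with the correct $c_x+1$ labels at each junction (so that (\ref{eq13}) is applicable with the stated indices), that $\max J_p = q+p$ throughout (which is where Claim \ref{clm26.7} and $\max J_{a_h+1} = q + (k_i+l-q+1)$ are used), and that the special case $n = k_i+l+1$, where $J_{n-q} = J_{a_h+1}$ and the chain degenerates, is handled consistently with the base case.
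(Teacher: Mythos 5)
Your proposal is correct and takes essentially the same route as the paper's proof: telescope the per-step estimate (\ref{eq13}) along the chain $J_p\prec J_{p+1}\prec\cdots\prec J_{a_h+1}\prec J_{n-q}$, and use Claim \ref{clm26.1} together with Proposition \ref{clm23} to make the $\beta$-contribution of the tail segment vanish; the paper phrases this as a single summation rather than a downward induction, but the computation is identical. One small slip worth noting: intermediate sets $T$ with $J_{a_h+1}\prec T\prec J_{n-q}$ do not in general satisfy $\max T=n$; what you actually need (and what your parenthetical correctly supplies) is $\max T\ge k_i+l+2$, which already forces the relevant binomial coefficients in Proposition \ref{clm23} to be zero, and moreover at $p=k_i+l-q$ the subtracted term $\binom{n-(q+p+1)}{k_j-(q+p+1-k_i)}=\binom{n-k_i-l-1}{k_j-l-1}$ is itself zero for every $j\ne i$, so the base case closes without any dropping of nonnegative terms.
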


\begin{proof}
Without loss of generality, let $a_{j-1}+1\leq p\leq a_j$ for some $1\leq j\leq h$. By our definition,
\begin{equation}\label{eq14}
J_p\overset{c_j+1}{\prec}J_{p+1}\overset{c_j+1}{\prec}\cdots\overset{c_j+1}{\prec}J_{a_j}
\overset{c_{j+1}+1}{\prec}J_{a_j+1}\overset{c_{j+1}+1}{\prec}\cdots\overset{c_h+1}{\prec}
J_{a_h}\overset{c_h+1}{\prec}J_{a_h+1}\overset{c_h+1}{\prec}\cdots\overset{c_h+1}{\prec}J_{n-q}.
\end{equation}
Let $T_1, T_2, \dots, T_Y\in \mathbb{R}_i$ be the sets such that
$J_{a_h}<T_1< T_2< \dots <T_Y<J_{n-q}.$ By Claim \ref{clm26.1}, $\max T_j\geq \max J_{a_h}+1=x-q$ holds for all $j\in [Y]$. By Proposition \ref{clm23},
and the definition of $x$,
\begin{align*}
\beta(J_{a_h}, J_{n-q})&=\beta(J_{a_h}, T_1)+\beta(T_1, T_2)+\cdots+\beta(T_Y, J_{n-q})\\
&<\alpha(J_{a_h}, T_1)+\alpha(T_1, T_2)+\cdots+\alpha(T_Y, J_{n-q})\\
&=\alpha(J_{a_h}, J_{n-q}).
\end{align*}
Then (\ref{eq13}) and (\ref{eq14}) give
\begin{align*}
\alpha(J_p, J_{n-q})-\beta(J_p, J_{n-q})
&=\alpha(J_p, J_{p+1})+\cdots+\alpha(J_{a_h-1}, J_{a_h})+\alpha(J_{a_h}, J_{n-q})\\
&\quad-\beta(J_p, J_{p+1})-\cdots-\beta(J_{a_h-1}, J_{a_h})-\beta(J_{a_h}, J_{n-q})\\
&>\sum_{j\ne i}d_j{n-(q+p)\choose k_j-(q+p-k_i)}.
\end{align*}
\end{proof}

It is easy to see that $1\leq c_1+1-c_h<c-c_h$. If $c-c_h=2$, then since $c-c_1\geq 2$, we have $h=1$ and $c_1+1=c-1$. By (\ref{eq14}), $J_1\overset{c_1+1}{\longrightarrow}J_{n-q}$ and $J_{n-q}<H$. By Proposition \ref{clm23} and $\max H=q+2$,
\begin{equation}\label{0}
\beta(J_{n-q}, H)=\sum_{j\ne i}d_j{n-(q+2)\choose k_j-(q+2-k_i)}.
\end{equation}

Then by Claim \ref{clm26.11},
\begin{align*}
\alpha(G, H)-\beta(G, H)
&=\alpha(J_1, H)-\beta(J_1, H)\\
&=\alpha(J_1, J_{n-q})+\alpha(J_{n-q}, H)-\beta(J_1, J_{n-q})-\beta(J_{n-q}, H)\\
&>\sum_{j\ne i}d_j\left[{n-(q+1)\choose k_j-(q+1-k_i)}-{n-(q+2)\choose k_j-(q+2-k_i)}\right]+d_i\\
&>0,
\end{align*}
where the first inequality holds by Claim \ref{clm26.11}, equation (\ref{0}) and $\alpha(J_{n-q}, H)=d_i$.
As desired.

Next we assume that $c-c_h\geq 3$.

Since $c_h<c_{h-1}<\cdots<c_1<c$ and $c-c_h>2$, we may define sequential families $\mathcal{F}_p$ for each  $1\leq p\leq c-c_h-2$, as follows.
Let $c_d-c_h+1\leq p\leq c_{d-1}-c_h$ for some $d=2, \dots, h$ or $c_1-c_h+1\leq p\leq c-c_h-2$. We define
$$\mathcal{F}_p: J_{a_{d-1}+1}\overset{c_h+1+p}{\prec}J_{a_{d-1}+2}^{(p)}\overset{c_h+1+p}{\prec}\cdots\overset{c_h+1+p}{\prec}J_{a_d}^{(p)}\overset{c_h+1+p}{\prec}J_{a_d+1}^{(p)}\overset{c_h+1+p}{\prec}\cdots\overset{c_h+1+p}{\prec}J_{n-q}^{(p)}.$$
By our definition, for any $J_j^{(p)}\in \mathcal{F}_p$, we get
\begin{equation}\label{eq14*}
\max J_j^{(p)}=q+j.
\end{equation}
Knowing that $J_{a_{h-1}+1}\overset{c_h+1}{\prec}\cdots\overset{c_h+1}{\prec}J_{a_h}\overset{c_h+1}{\prec}J_{a_h+1}\overset{c_h+1}{\longrightarrow}J_{n-q}$,
$J_{a_{h-1}+1}\overset{c_h+2}{\prec}J_{a_{h-1}+2}^{(1)}$ and $\alpha(J_{n-q}, J_{a_{h-1}+2}^{(1)})=d_i$, we also denote $J_{n-q, 1}^{(1)}, J_{n-q, 2}^{(1)}, \dots, J_{{n-q}, t_h}^{(1)}$ as follows
\begin{align*}
&J_{a_{h-1}+2}^{(1)}\overset{c_h+1}{\longrightarrow}J_{n-q, 1}^{(1)},\,\, i.e., \,\,J_{{n-q}, 1}^{(1)}<J_{a_{h-1}+3}^{(1)};\\
&J_{a_{h-1}+3}^{(1)}\overset{c_h+1}{\longrightarrow}J_{n-q, 2}^{(1)},\,\, i.e., \,\,J_{{n-q}, 2}^{(1)}<J_{a_{h-1}+4}^{(1)};\\
&\quad\vdots\\
&J_{a_h+1}^{(1)}\overset{c_h+1}{\longrightarrow}J_{n-q, t_h}^{(1)},\,\, i.e.,\,\,J_{{n-q}, t_h}^{(1)}<J_{a_h+2}^{(1)}.
\end{align*}

Consequently,
\begin{align*}
&\alpha(J_{a_{h-1}+1}, J_{n-q}^{(1)})-\beta(J_{a_{h-1}+1}, J_{n-q}^{(1)})\\
&=\alpha(J_{a_{h-1}+1}, J_{n-q})+\alpha(J_{n-q}, J_{a_{h-1}+2}^{(1)})+\alpha(J_{a_{h-1}+2}^{(1)}, J_{{n-q}, 1}^{(1)})+\cdots\\
&\quad+\alpha(J_{{n-q}, t_h-1}^{(1)}, J_{a_h+1}^{(1)})+\alpha(J_{a_h+1}^{(1)}, J_{n-q}^{(1)})- \beta(J_{a_{h-1}+1}, J_{n-q})\\
&\quad-\beta(J_{n-q}, J_{a_{h-1}+2}^{(1)})
-\beta(J_{a_{h-1}+2}^{(1)}, J_{{n-q}, 1}^{(1)})-\cdots\\
&\quad-\beta(J_{{n-q}, t_h-1}^{(1)}, J_{a_h+1}^{(1)})
-\beta(J_{a_h+1}^{(1)}, J_{n-q}^{(1)}).
\end{align*}

Applying Corollary \ref{coro22}, we get
\begin{align*}
&\alpha(J_{a_{h-1}+2}^{(1)}, J_{{n-q}, 1}^{(1)})=\alpha(J_{a_{h-1}+2}, J_{n-q}), \,\beta(J_{a_{h-1}+2}^{(1)}, J_{{n-q}, 1}^{(1)})=\beta(J_{a_{h-1}+2}, J_{n-q}),\\
&\alpha(J_{a_{h-1}+3}^{(1)}, J_{{n-q}, 2}^{(1)})=\alpha(J_{a_{h-1}+3}, J_{n-q}), \,\beta(J_{a_{h-1}+3}^{(1)}, J_{{n-q}, 2}^{(1)})=\beta(J_{a_{h-1}+3}, J_{n-q}),\\
&\quad \vdots\\
&\alpha(J_{a_h+1}^{(1)}, J_{{n-q}, t_h}^{(1)})=\alpha(J_{a_h+1}, J_{n-q}), \,\beta(J_{a_h+1}^{(1)}, J_{{n-q}, t_h}^{(1)})=\beta(J_{a_h+1}, J_{n-q}).
\end{align*}

By Proposition \ref{clm23} and (\ref{eq14*}),
\begin{align*}
&\beta(J_{n-q}, J_{a_{h-1}+2}^{(1)})=\sum_{j\ne i}d_j{n-(a_{h-1}+2+q)\choose k_j-(a_{h-1}+2+q-k_i)}, \\
&\beta(J_{{n-q}, 1}^{(1)}, J_{a_{h-1}+3}^{(1)})=\sum_{j\ne i}d_j{n-(a_{h-1}+3+q)\choose k_j-(a_{h-1}+3+q-k_i)}, \\
&\quad \vdots\\
& \beta(J_{{n-q}, t_h-1}^{(1)}, J_{a_h+1}^{(1)})=\beta(J_{a_h+1}^{(1)}, J_{n-q}^{(1)})<d_i.
\end{align*}
Then by Claim \ref{clm26.11}, we have
\begin{align}\label{eq15}
&\alpha(J_{a_{h-1}+1}, J_{n-q})-\beta(J_{a_{h-1}+1}, J_{n-q})\\
&>\sum_{j\ne i}d_j\left[{n-(a_{h-1}+1+q)\choose k_j-(a_{h-1}+1+q-k_i)}-{n-(a_{h-1}+2+q)\choose k_j-(a_{h-1}+2+q-k_i)}\right.\nonumber\\
&\quad \left.+{n-(a_{h-1}+2+q)\choose k_j-(a_{h-1}+2+q-k_i)}-\cdots+{n-(a_h+1+q)\choose k_j-(a_h+1+q-k_i)}\right]\nonumber\\
&=\sum_{j\ne i}d_j{n-(a_{h-1}+1+q)\choose k_j-(a_{h-1}+1+q-k_i)}.
\end{align}

Using the same argument, we get
\begin{align}\label{eq16}
&\alpha(J_{a_{h-1}+2}^{(1)}, J_{n-q}^{(1)})-\beta(J_{a_{h-1}+2}^{(1)}, J_{n-q}^{(1)})>\sum_{j\ne i}d_j{n-(a_{h-1}+2+q)\choose k_j-(a_{h-1}+2+q-k_i)},\\
&\quad\vdots \nonumber\\ \label{eq17}
&\alpha(J_{a_h}^{(1)}, J_{n-q}^{(1)})-\beta(J_{a_h}^{(1)}, J_{n-q}^{(1)})>\sum_{j\ne i}d_j{n-(a_h+q)\choose k_j-(a_h+q-k_i)},\\ \label{eq18}
&\alpha(J_{a_h+1}^{(1)}, J_{n-q}^{(1)})-\beta(J_{a_h+1}^{(1)}, J_{n-q}^{(1)})>0.
\end{align}

\begin{clm}\label{clm26.12}
Let $1\leq k\leq c-c_h-2$ and $D\in \mathcal{F}_k$ with $\max D=p+	q$. Then
$$\alpha(D, J_{n-q}^{(k)})-\beta(D, J_{n-q}^{(k)})>\sum_{j\ne i}d_j{n-(p+q)\choose k_j-(p+q-k_i)}.$$
\end{clm}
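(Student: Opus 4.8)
The plan is to prove Claim \ref{clm26.12} by downward induction on $k$, starting from $k=c-c_h-2$ and decreasing to $k=1$; the base-case structure for $k=1$ is exactly what was established in inequalities (\ref{eq15})--(\ref{eq18}), which treat the family $\mathcal{F}_1$ member-by-member. The key observation is that the family $\mathcal{F}_k$ consists of sets that are $(c_h+1+k)$-sequential in blocks, and each member $J_j^{(k)}$ satisfies $\max J_j^{(k)}=q+j$ by (\ref{eq14*}), so the indices $p$ that appear in the statement are precisely the "heights above $q$" of the sets in $\mathcal{F}_k$.

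First I would set up, for a fixed $k$ with $1\le k\le c-c_h-2$ and a fixed $D\in\mathcal{F}_k$ with $\max D=p+q$, the telescoping decomposition of $\alpha(D,J_{n-q}^{(k)})-\beta(D,J_{n-q}^{(k)})$ along the chain from $D$ up through $J_{n-q}^{(k)}$, using Claim \ref{clm21} to split $\alpha$ and $\beta$ additively. Between consecutive members of $\mathcal{F}_k$ the $\alpha$-increment is $1$ (the increment of $|\mathcal{L}|$ along an immediate successor in the $(c_h+1+k)$-sequential chain contributes exactly one new set in the partner direction once we pass to the $J_{n-q}$-completions), while the $\beta$-increments are controlled by Proposition \ref{clm23} together with (\ref{eq14*}): the $\beta$ cost of moving past a member of height $q+p'$ is $\sum_{j\ne i}\binom{n-(q+p'+1)}{k_j-(q+p'+1-k_i)}$. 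The heart of the argument is that when we complete each member $J_j^{(k)}$ to its $J_{n-q}$-version using Corollary \ref{coro22}, the $\alpha$ and $\beta$ values of the "completion segments" match those of the corresponding segments one level down in $\mathcal{F}_{k-1}$ — this is the mechanism already illustrated in the passage from Claim \ref{clm26.11} to (\ref{eq15}), where the identities $\alpha(J_{a_{h-1}+r}^{(1)},J_{n-q,r-1}^{(1)})=\alpha(J_{a_{h-1}+r},J_{n-q})$ etc. are invoked. So by the induction hypothesis applied to the members of $\mathcal{F}_{k-1}$ (or to Claim \ref{clm26.11} when $k=1$), each completion segment contributes a strict surplus of $\sum_{j\ne i}\binom{n-(q+p')}{k_j-(q+p'-k_i)}$ over its $\beta$ cost.

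Then I would assemble these contributions into one telescoping sum. Writing $D=J_{j_0}^{(k)}$ with $j_0$ the index such that $\max D=q+j_0$, the chain from $D$ to $J_{n-q}^{(k)}$ passes through members $J_{j_0}^{(k)},J_{j_0+1}^{(k)},\dots$, and at each step the $\alpha$-surplus-over-$\beta$ that we pick up (via the completion segment handled by the induction hypothesis, minus the $\beta$-cost of the single step up, which is $\sum_{j\ne i}\binom{n-(q+j'+1)}{k_j-(q+j'+1-k_i)}$ by Proposition \ref{clm23}) telescopes: the $\binom{n-(q+j'+1)}{\cdots}$ term subtracted at one step is cancelled by the $\binom{n-(q+(j'+1))}{\cdots}$ term added at the next. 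After cancellation only the first term $\sum_{j\ne i}\binom{n-(q+j_0)}{k_j-(q+j_0-k_i)}=\sum_{j\ne i}\binom{n-(p+q)}{k_j-(p+q-k_i)}$ survives (the final tail terms vanish because, as in the argument before (\ref{eq15}), the relevant binomial coefficients become $\binom{n-(q+\text{something}\ge k_i+l+1-k_i+k_i)}{\cdots}=0$ once the index exceeds $k_i+l$ — this uses $p\le k_i+l-q$ so that all intermediate heights stay in range, together with Claim \ref{clm26.1} to see that sets of height $>k_i+l-q+1$ carry zero $\beta$-weight). This yields exactly the claimed strict inequality.

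The main obstacle I anticipate is bookkeeping: keeping straight which completion segment of $\mathcal{F}_k$ corresponds to which segment of $\mathcal{F}_{k-1}$, and verifying that the $\max$-values line up so that Corollary \ref{coro22} applies (it requires matching $\max$ of both the "start" and "end" sets). One must be careful that the blockwise change of the sequential parameter (from $c_d+1+(k-1)$ to $c_d+1+k$ as $d$ varies and as $k$ increments) does not disrupt the index arithmetic; the saving grace is that (\ref{eq14*}) pins $\max J_j^{(k)}=q+j$ uniformly across all $k$, so the heights — which are all that Proposition \ref{clm23} and Corollary \ref{coro22} care about — are insensitive to $k$. Once that is checked, the telescoping is routine and mirrors the $k=1$ computation already carried out in (\ref{eq15})--(\ref{eq18}). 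I would write the induction step generically for an arbitrary $D\in\mathcal{F}_k$, treating separately only the last two members $J_{a_h}^{(k)}$ and $J_{a_h+1}^{(k)}$ whose completion segments have zero $\beta$-weight, exactly as (\ref{eq17}) and (\ref{eq18}) do at level $1$.
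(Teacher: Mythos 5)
Your proposal takes essentially the same route as the paper's proof: induction on $k$ with base case $k=1$ supplied by (\ref{eq15})--(\ref{eq18}), followed by a telescoping decomposition of $\alpha-\beta$ along the chain in $\mathcal{F}_k$, with the completion segments matched to level $k-1$ via Corollary~\ref{coro22} and the one-step $\beta$-costs evaluated by Proposition~\ref{clm23} together with~(\ref{eq14*}), and with the tail terms vanishing exactly as in the paper. One small correction: you open by calling this ``downward induction, starting from $k=c-c_h-2$ and decreasing,'' but the argument you actually run --- induction hypothesis applied to $\mathcal{F}_{k-1}$, base case $k=1$ --- is upward induction, which is what the paper does; the opening phrase is just a slip.
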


\begin{proof}
By induction on $k$. For $k=1$, following from (\ref{eq15})-- (\ref{eq18}), we are done. Assume that it holds for $\mathcal{F}_j, j\in [1, c-c_h-3]$, we want to prove it holds for $\mathcal{F}_{j+1}$. Define $\widetilde{J}_2^{(j)}, \dots,
\widetilde{J}_{t_1}^{(j)}, \\\widetilde{J}_{t_1+1}^{(j)},
\dots, \widetilde{J}_{n-q}^{(j)}$ as follows:
$\widetilde{J}_p^{(j)}<J_p^{(j)}, p=2,\dots, n$.
Note that $\widetilde{J}_2^{(j)}=J_{n-q}^{(j-1)}$. By induction hypothesis, and $\max J_1=q+1$, we have
\begin{align}
\alpha(J_1, \widetilde{J}_2^{(j)})-\beta(J_1, \widetilde{J}_2^{(j)})
&=\alpha(J_1, J_{n-q}^{(j-1)})-\beta(J_1, J_{n-q}^{(j-1)})\nonumber \\ \label{eq19}
&>\sum_{j\ne i}d_j{n-(q+1)\choose k_j-(q+1-k_i)}.
\end{align}
And for $2\leq p\leq n-q$, we have
\begin{equation}\label{eq20}
\alpha(J_p^{(j-1)}, \widetilde{J}_p^{(j)})-\beta(J_p^{(j-1)}, \widetilde{J}_p^{(j)})>\sum_{j\ne i}d_j{n-(q+p)\choose k_j-(q+p-k_i)}.
\end{equation}
Recall that for $2\leq p\leq n-q-1$, we have
$$J_p^{(j)}\overset{c_h+j}{\longrightarrow}\widetilde{J}_{p+1}^{(j)}, \,\,J_p^{(j-1)}\overset{c_h+j}{\longrightarrow}J_{n-q}^{(j-1)}$$
and
$$\max J_p^{(j)}=\max J_p^{(j-1)}=q+p,\,\, \max \widetilde{J}_{p+1}^{(j)}=\max J_{n-q}^{(j-1)}=n.$$
Applying Corollary \ref{coro22}, we get
$$\alpha(J_p^{(j)}, {J}_{p+1}^{(j)})=\alpha(J_p^{(j-1)}, J_{n-q}^{(j-1)})$$
and
$$\beta(J_p^{(j)}, {J}_{p+1}^{(j)})=\beta(J_p^{(j-1)}, J_{n-q}^{(j-1)}).$$

By Proposition \ref{clm23} and inequalities (\ref{eq19}), (\ref{eq20}), if $2\leq p\leq n-q-1$, then
\begin{align*}
&\alpha(J_p^{(c_h+j-1)}, J_{n-q}^{(c_h+j-1)})-\beta(J_p^{(c_h+j-1)}, J_{n-q}^{(c_h+j-1)})\\
&=\alpha(J_p^{(c_h+j-1)}, \widetilde{J}_{p+1}^{(c_h+j-1)})+\alpha(\widetilde{J}_{p+1}^{(c_h+j-1)}, J_{p+1}^{(c_h+j-1)})+\alpha(J_{p+1}^{(c_h+j-1)}, \widetilde{J}_{p+2}^{(c_h+j-1)})+\cdots\\
&\quad+\alpha(\widetilde{J}_{n-q}^{(c_h+j-1)}, J_{n-q}^{(c_h+j-1)})-\beta(J_p^{(c_h+j-1)}, \widetilde{J}_{p+1}^{(c_h+j-1)})-\beta(\widetilde{J}_{p+1}^{(c_h+j-1)}, J_{p+1}^{(c_h+j-1)})\\
&\quad-\beta(J_{p+1}^{(c_h+j-1)}, \widetilde{J}_{p+2}^{(c_h+j-1)})
-\cdots-\beta(\widetilde{J}_{n-q}^{(c_h+j-1)}, J_{n-q}^{(c_h+j-1)})\\
&>\sum_{j\ne i}d_j\left[ {n-(q+p)\choose k_j-(q+p-k_i)}-{n-(q+p+1)\choose k_j-(q+p+1-k_i)}+{n-(q+p+1)\choose k_j-(q+p+1-k_i)}\right.\\
&\quad \left.-\cdots+{n-(x)\choose k_j-(x-k_i)}-{n-(x+1)\choose k_j-(x+1-k_i)} \right]\\
&=\sum_{j\ne i}d_j{n-(q+p)\choose k_j-(q+p-k_i)},
\end{align*}
where the second inequality follows from (\ref{eq20}) and Proposition \ref{clm23}.

For $p=1$, by (\ref{eq19}) and using the same argument as above, we get
\begin{equation}\label{eq21}
\alpha(J_1, J_{n-q}^{(c_h+j-1)})-\beta(J_1, J_{n-q}^{(c_h+j-1)})>\sum_{j\ne i}d_j{n-(q+1)\choose k_j-(q+1-k_i)}.
\end{equation}
\end{proof}

Next, we are going to complete the proof of Lemma \ref{clm3}.

Recall that $J_{n-q}^{(c-3)}<H$ and $\max H=q+2, G=J_1$, so
\begin{align*}
\alpha(G, H)-\beta(G, H)
&=\alpha(G, J_{n-q}^{(c-3)})+\alpha(J_{n-q}^{(c-3)}, H)-\beta(G, J_{n-q}^{(c-3)})-\beta(J_{n-q}^{(c-3)}, H)\\
&>\sum_{j\ne i}d_j\left[ {n-(q+1)\choose k_j-(q+1-k_i)}-{n-(q+2)\choose k_j-(q+2-k_i)} \right]+d_i\\
&>0,
\end{align*}
where the second inequality follows from (\ref{eq21}), Proposition \ref{clm23} and $\alpha(J_{n-q}^{(c-3)}, H)=d_i$.
The proof of Lemma \ref{clm3} is complete.

\subsection{Proof of Lemma \ref{clm28}}

Recall that $\mathcal{R}_{i, k}=:\{R\in\mathbb{R}_i: [n-k+1, n]\subset R\},
\mathcal{R}_i(k)=:\{R\setminus [n-k+1, n]: R\in\mathcal{R}_{i, k}\}$ for $k\in [k_i-1]$.
By Remark \ref{coro11} and using the same argument as Claim \ref{clm20}, we have the following claim.

\begin{clm}\label{clm27}
Let $1\leq j\leq k_i-1$ and $1\leq d\leq k_i-j$. Let $F, H, F', H'\in \mathcal{R}_i(j)$ and $F\overset{d}{\prec}H, F'\overset{d}{\prec}H'$. If $\max F=\max F'$, then $\alpha(F, H)=\alpha(F', H')$ and $\beta(F, H)=\beta(F', H')$.
\end{clm}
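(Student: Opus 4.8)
The plan is to lift everything from $\mathcal{R}(j)$ back to $\mathcal{R}$ and then recycle the computation behind Claim \ref{clm20}. Set $\hat F:=F\sqcup[n-j+1,n]$, $\hat H:=H\sqcup[n-j+1,n]$, $\hat F':=F'\sqcup[n-j+1,n]$, $\hat H':=H'\sqcup[n-j+1,n]$; all four belong to $\mathcal{R}_j\subset\mathcal{R}$, and by the convention for $\alpha,\beta$ on $\mathcal{R}(j)$ we have $\alpha(F,H)=\alpha(\hat F,\hat H)$, $\beta(F,H)=\beta(\hat F,\hat H)$, and likewise for the primed sets. So it suffices to prove $\alpha(\hat F,\hat H)=\alpha(\hat F',\hat H')$ and $\beta(\hat F,\hat H)=\beta(\hat F',\hat H')$.

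First I would fix the structure. Let $A$ (resp.\ $A'$) be the head of $F,H$ (resp.\ of $F',H'$); both have size $(k_i-j)-c$, so $F=A\sqcup\{a+1,\dots,a+c\}$, $H=A\sqcup\{a+2,\dots,a+c+1\}$ for some $a\ge\max A$, and analogously with $A',a'$ for the primed sets. Since $\max F=\max F'$ we get $a=a'$, so $\hat F$ and $\hat F'$ carry the same elements in positions $|A|+1,\dots,k_i$ (namely $a+1,\dots,a+c$ followed by $n-j+1,\dots,n$), and $\hat H,\hat H'$ likewise (with $a+2,\dots,a+c+1$ followed by $n-j+1,\dots,n$); moreover $\max H=a+c+1\le n-j$ because $H\in\mathcal{R}(j)$, so the shifted middle block is separated from the trailing block $[n-j+1,n]$ (the borderline case $a+c+1=n-j$ is handled by Remark \ref{coro11}, which deletes the run ending at $n$). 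Thus $\hat F$ and $\hat H$ agree outside the $c$ positions $|A|+1,\dots,|A|+c$, where $\hat F$ has $\{a+1,\dots,a+c\}$ and $\hat H$ has $\{a+2,\dots,a+c+1\}$, and the same description holds for $\hat F',\hat H'$ with identical $a,c,|A|=|A'|$.

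For $\beta$, apply Proposition \ref{prop9}(2): for $j'\ne i$, $|\mathcal{L}([n],T,k_{j'})|=\sum_{r=1}^{k_i}{n-f_r\choose k_{j'}-f_r+r-1}$, where $T$ is the partner of $\hat F$ and $\hat F=\{f_1<\dots<f_{k_i}\}$, and similarly for $\hat H$. In the difference $\beta(\hat F,\hat H)$ the terms indexed by the $r$'s on which $\hat F$ and $\hat H$ agree cancel, leaving $\sum_{j'\ne i}\sum_{s=1}^{c}\left({n-(a+s)\choose k_{j'}-(a+s)+(|A|+s)-1}-{n-(a+s+1)\choose k_{j'}-(a+s+1)+(|A|+s)-1}\right)$, an expression depending only on $a,c,|A|,n$ and the multiset $\{k_{j'}:j'\ne i\}$; since $\hat F',\hat H'$ give the same expression, $\beta(\hat F,\hat H)=\beta(\hat F',\hat H')$. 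For $\alpha$, argue dually via Proposition \ref{prop9}(1): the partner of $\hat F$ equals $([n]\setminus\hat F)\cup\{n\}=([1,a]\setminus A)\cup[a+c+1,n-j]\cup\{n\}$, and the partner of $\hat H$ is obtained from it by replacing $a+c+1$ by $a+1$; as the partner's elements smaller than $a+1$ are exactly $[1,a]\setminus A$ ($a-|A|$ of them), this replacement sits at the $(a-|A|+1)$-th place of the sorted partner. Hence all terms of Proposition \ref{prop9}(1) cancel in $\alpha(\hat F,\hat H)$ except at that one place, where $\hat H$ contributes ${n-(a+1)\choose k_i-1-|A|}$ and $\hat F$ contributes ${n-(a+c+1)\choose k_i-c-1-|A|}$, so $\alpha(\hat F,\hat H)={n-(a+1)\choose k_i-1-|A|}-{n-(a+c+1)\choose k_i-c-1-|A|}$, again a function of $a,c,|A|,n,k_i$ only; thus $\alpha(\hat F,\hat H)=\alpha(\hat F',\hat H')$. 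Remark \ref{coro10} is what justifies disregarding the trailing block $[n-j+1,n]$ when evaluating these $\mathcal{L}$-families, since it contributes identically on both sides.

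I do not expect a real obstacle here: the content is precisely the index bookkeeping already done in Claim \ref{clm20}, the only new ingredient being the inert suffix $[n-j+1,n]$, which adds the same amount on both sides of each identity and can be pushed aside by Remark \ref{coro10}. The point that needs a little care is that the head-position terms in Proposition \ref{prop9}'s formulas are cancelled \emph{within} each of $\alpha(\hat F,\hat H)$ and $\alpha(\hat F',\hat H')$ (and within each $\beta$) before the two sides are compared, so the inequality $A\ne A'$ never forces us to match up the differing heads.
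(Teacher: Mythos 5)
Your proof is correct and takes essentially the same route as the paper, which disposes of this claim in one line (``By Remark \ref{coro10} and using the same argument as Claim \ref{clm20}''): you lift from $\mathcal{R}(j)$ back to $\mathcal{R}$, note $a=a'$ and $|A|=|A'|$, and then run the Proposition \ref{prop9} cancellation from Claim \ref{clm20} with the inert suffix $[n-j+1,n]$ carried along. You have merely made explicit the index bookkeeping that the paper leaves to the reader.
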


\begin{clm}\label{0000}
Let $F_1<G_1, F_2<G_2$ in $\mathcal{R}_i(j), j\in [0, k_i-1]$ with $\max G_1=\max G_2$. Then $\alpha(F_1, G_1)=\alpha(F_2, G_2)$ and $\beta(F_1, G_1)=\beta(F_2, G_2)$.
\end{clm}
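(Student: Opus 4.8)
To prove Claim~\ref{0000} I would first show that $\mathcal{R}(j)$ is a lex-interval and thereby reduce everything to a ``$(k_i-j)$-uniform'' instance. Recall that $\mathcal{R}=\{R_0\prec\cdots\prec R_Z\}$ is by construction a lex-interval: $R_r$ is the $({n-1\choose k_i-1}+r)$-th $k_i$-subset of $[n]$ in lex order, so $\mathcal{R}=\{R\in{[n]\choose k_i}:R_0\preceq R\preceq R_Z\}$. Both $R_0=\{1\}\cup[n-k_i+2,n]$ and $R_Z=\{m\}\cup[n-k_i+2,n]$ contain $[n-j+1,n]$ (since $j\le k_i-1$), and for $(k_i-j)$-subsets $F,F'$ of $[n-j]$ one has $F\cup[n-j+1,n]\preceq F'\cup[n-j+1,n]$ iff $F\preceq F'$; hence
$$\mathcal{R}(j)=\Bigl\{F\in{[n-j]\choose k_i-j}:\ \{1\}\cup[n-k_i+2,n-j]\preceq F\preceq\{m\}\cup[n-k_i+2,n-j]\Bigr\},$$
a lex-contiguous interval of $(k_i-j)$-subsets of $[n-j]$. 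Consequently, if $F_1<G_1$ in $\mathcal{R}(j)$, then — $F_1$ not being the maximum of this interval and $G_1$ not its minimum — $G_1$ is the immediate lex-successor of $F_1$ among \emph{all} of ${[n-j]\choose k_i-j}$; in particular $F_1$ is determined by $G_1$, and likewise $F_2$ by $G_2$. So it suffices to prove that, for a consecutive pair $F<G$ in $\mathcal{R}(j)$ (writing $\hat R:=R\cup[n-j+1,n]$), both $\alpha(F,G)$ and $\beta(F,G)$ depend on $G$ only through $\max G$.

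For $\alpha$: the quantity $\alpha(F,G)=|\mathcal{L}([n],\hat G,k_i)|-|\mathcal{L}([n],\hat F,k_i)|$ counts the $k_i$-subsets $E$ of $[n]$ with $\hat F\precneqq E\preceq\hat G$. Expanding $|\mathcal{L}([n],A,k_i)|$ through the partner of $A$ as in Proposition~\ref{prop9} (equivalently through the lex-rank formula), the $j$ summands attached to the trailing block $[n-j+1,n]$ of $\hat F$ and of $\hat G$ are $0$, so $|\mathcal{L}([n],\hat R,k_i)|={n\choose k_i}-\sum_{l=1}^{k_i-j}{n-r_l\choose k_i-l+1}$ when $R=\{r_1<\cdots<r_{k_i-j}\}$. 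Using that $G$ is the immediate lex-successor of $F$ in ${[n-j]\choose k_i-j}$ — i.e. $F$ and $G$ share a prefix up to the largest position $p$ where $f_p$ is not already maximal, and $G$ then minimizes the tail — a short Pascal/hockey-stick computation (two cases, $\max F<n-j$ and $\max F=n-j$) collapses $\sum_{l}\bigl[{n-f_l\choose k_i-l+1}-{n-g_l\choose k_i-l+1}\bigr]$ to ${n-\max G\choose j}$. Thus $\alpha(F,G)={n-\max G\choose j}$, depending on $G$ only through $\max G$.

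For $\beta$: let $T$ be the partner of $\hat F=F\cup[n-j+1,n]$. Expanding $|\mathcal{L}([n],T,k_{j'})|$ by Proposition~\ref{prop9} (formula (\ref{eq2})), the $j$ summands indexed by the block $[n-j+1,n]$ sum by hockey-stick to ${j\choose k_{j'}-n+k_i}$, which does not involve $F$; cancelling this constant in the difference gives $\beta(F,G)=\sum_{j'\ne i}\bigl(|\mathcal{L}([n],Q_F,k_{j'})|-|\mathcal{L}([n],Q_G,k_{j'})|\bigr)$, where $Q_F,Q_G$ are the partners of $F,G$ regarded as $(k_i-j)$-subsets of $[n]$. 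This is precisely the ``$\beta$''-quantity of the consecutive pair $F<G$ in $\mathcal{R}(j)$ read in the version of the whole setup with top uniformity $k_i-j$; since the proof of Proposition~\ref{clm23} uses only the partner formula together with the two structural alternatives for a consecutive pair (``$\max F=\max G-1$'' and ``$\max F$ equals the top of the ground set'', the latter handled through the reductions of Remarks~\ref{coro10} and \ref{coro11}), it applies verbatim with $k_i$ replaced by $k_i-j$ and ground set $[n-j]$ for $F,G$, yielding $\beta(F,G)=\sum_{j'\ne i}{n-\max G\choose k_{j'}-(\max G-k_i+j)}$, again a function of $\max G$ only. Combining the last two paragraphs with $\max G_1=\max G_2$ proves Claim~\ref{0000}.

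The main obstacle is the first paragraph — establishing cleanly that $\mathcal{R}(j)$ is a lex-interval, so that ``consecutive in $\mathcal{R}(j)$'' coincides with ``consecutive in ${[n-j]\choose k_i-j}$'' and the whole computation can be run as if in a $(k_i-j)$-uniform problem. This is exactly what lets us ignore the rest of $\mathcal{R}$; note that Claim~\ref{clm27} does \emph{not} settle Claim~\ref{0000} by itself, because two consecutive pairs with $\max G_1=\max G_2$ can have $\max F_1\ne\max F_2$ (one ``incrementing the last coordinate'', $\max F<n-j$, the other resetting a long tail, $\max F=n-j$), whereas Claim~\ref{clm27} requires $\max F_1=\max F_2$. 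A secondary, purely bookkeeping obstacle is verifying that the differences of the partner-formula expansions genuinely lose all dependence on the block $[n-j+1,n]$ and collapse to the stated binomial coefficients; once that is done, the reduction of $\beta$ to the parameter-shifted Proposition~\ref{clm23} is routine.
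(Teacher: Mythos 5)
Your proof is correct and takes a genuinely different route from the paper's. The paper handles $j\ge 1$ by lifting back to $\mathcal{R}$: it sets $F'_1=F_1\sqcup[n-j+1,n]$, $G'_1=G_1\sqcup[n-j+1,n]$, takes the immediate lex-successor $H_1$ of $F'_1$ in $\mathcal{R}$, observes $H_1\overset{j}{\longrightarrow}G'_1$ with $\max H_1=\max G_1+j$ and $\max G'_1=n$, and then kills the dependence on $F_1$ via Corollary~\ref{coro22} on the $j$-sequential piece $H_1\to G'_1$, together with Proposition~\ref{clm23} and $\alpha(F'_1,H_1)=1$ on the one-step piece $F'_1<H_1$. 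You instead make the step fully explicit: after the (correct and worth having) observation that $\mathcal{R}(j)$ is a lex-interval in ${[n-j]\choose k_i-j}$, you derive the closed forms $\alpha(F,G)={n-\max G\choose j}$ and $\beta(F,G)=\sum_{j'\ne i}{n-\max G\choose k_{j'}-(\max G-k_i+j)}$, which make the claimed dependence on $\max G$ alone manifest. Your approach buys explicit formulas at the cost of a two-case Pascal/hockey-stick computation; the paper's is terser but leans on the $c$-sequential machinery already built. One small imprecision to fix in a final write-up: when you say the proof of Proposition~\ref{clm23} ``applies verbatim'' with ground set $[n-j]$, note that the families $\mathcal{L}([n],Q_F,k_{j'})$ in $\beta$ still live over $[n]$, so the reduction in the case $\max F=n-j$ cannot simply invoke Remark~\ref{coro11} (whose hypothesis is $\max R=n$); what actually makes the excess partner-formula terms vanish is that for $s>s_0$ one gets ${k_i-s\choose k_{j'}-n+k_i-1}=0$ because $n\ge k_i+k_{j'}$. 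The computation you describe does yield the stated binomials, so the proof stands, but the justification should cite this inequality rather than Remark~\ref{coro11}.
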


\begin{proof}
For $j=0$, we can see that $\alpha(F_1, G_1)=\alpha(F_2, G_2)=1$, and then Proposition \ref{clm23} gives $\beta(F_1, G_1)=\beta(F_2, G_2)$. Now assume that $j\geq 1$. Let $F'_1=F_1\sqcup \{n-j+1, \dots, n\}, F'_2=F_2\sqcup \{n-j+1, \dots, n\}, G'_1=G_1\sqcup \{n-j+1, \dots, n\}, G'_2=G_2\sqcup \{n-j+1, \dots, n\},$ then $F'_1, F'_2, G'_1, G'_2\in \mathbb{R}_i$.
Let $H_1$ and $H_2$ be the sets such that $F'_1<H_1$ and $F'_2<H_2$ in $\mathbb{R}_i$. We get $H_1\overset{j}{\longrightarrow}G'_1$ and $H_2\overset{j}{\longrightarrow}G'_2$. By the definitions of $F_1, G_1, F_2$ and $G_2$, we have $\max H_1=\max H_2$ and  $\max G'_1=\max G'_2$. So Corollary \ref{coro22} gives $\alpha(F'_1, G'_1)=\alpha(F'_2, G'_2)$ and $\beta(F'_1, G'_1)=\beta(F'_2, G'_2)$, that is $\alpha(F_1, G_1)=\alpha(F_2, G_2)$ and $\beta(F_1, G_1)=\beta(F_2, G_2)$.
\end{proof}

It's easy to check the following corollary by using a similar argument of Corollary\ref{coro22}.
\begin{corollary}\label{000}
Let $d\in [k_i-j]$ and $F, G, F', G'\in \mathcal{R}_i(j)$. If $F, G$ are $d$-sequential, $F', G'$ are $d$-sequential satisfying $\max F=\max F'$ and $\max G=\max G'$, then $\alpha (F, G)=\alpha (F', G')$ and $\beta (F, G)=\beta (F', G')$.
\end{corollary}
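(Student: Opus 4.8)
The plan is to transcribe the proof of Corollary~\ref{coro22} almost verbatim, with Claim~\ref{clm27} playing the role of Claim~\ref{clm20} and the obvious $\mathcal{R}(j)$-analogue of the telescoping identity of Claim~\ref{clm21} playing its own role. First I would dispose of the degenerate cases: if $F=G$ then, since the maxima agree at both ends, also $F'=G'$, and all four quantities vanish; and if $j=0$ then $\mathcal{R}(0)=\mathcal{R}$ and the statement is exactly Corollary~\ref{coro22}. Hence from now on assume $1\le j\le k_i-1$ and $F\precneqq G$.

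Next I would record that $\alpha$ and $\beta$ are additive along chains in $\mathcal{R}(j)$, i.e.\ $\alpha(X,Z)=\alpha(X,Y)+\alpha(Y,Z)$ and $\beta(X,Z)=\beta(X,Y)+\beta(Y,Z)$ whenever $X\prec Y\prec Z$ in $\mathcal{R}(j)$. Reading each $R\in\mathcal{R}(j)$ through its identification with $R\sqcup\{n-j+1,\dots,n\}\in\mathcal{R}$, this is literally Claim~\ref{clm21}; alternatively it is immediate from the defining formulas (\ref{eq6})--(\ref{eq7}), since each of $\alpha,\beta$ is a difference of cardinalities of $L$-initial families and such differences telescope.

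The main step is then to split the two $c$-sequential chains into single steps, $F=H_0\overset{c}{\prec}H_1\overset{c}{\prec}\cdots\overset{c}{\prec}H_s=G$ and $F'=H'_0\overset{c}{\prec}H'_1\overset{c}{\prec}\cdots\overset{c}{\prec}H'_{s'}=G'$, all of whose members again lie in $\mathcal{R}(j)$ (which is a lex-interval among sets of its size). A single $\overset{c}{\prec}$ raises the maximum of a set by exactly $1$, so $s=\max G-\max F$ and $s'=\max G'-\max F'$; the hypotheses $\max F=\max F'$ and $\max G=\max G'$ therefore force $s=s'$ and, step by step, $\max H_p=\max F+p=\max H'_p$ for all $p$. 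Applying Claim~\ref{clm27} to each pair $H_p\overset{c}{\prec}H_{p+1}$, $H'_p\overset{c}{\prec}H'_{p+1}$ (whose left endpoints have equal maximum) gives $\alpha(H_p,H_{p+1})=\alpha(H'_p,H'_{p+1})$ and $\beta(H_p,H_{p+1})=\beta(H'_p,H'_{p+1})$; summing over $p$ and using the additivity from the previous paragraph yields $\alpha(F,G)=\alpha(F',G')$ and $\beta(F,G)=\beta(F',G')$.

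I do not expect any real obstacle here, as the corollary is a routine unfolding of Corollary~\ref{coro22}. The single point that needs a moment's attention is the matching of chain lengths $s=s'$: this is exactly where one uses that the maxima agree at \emph{both} endpoints together with the fact that each $\overset{c}{\prec}$ increments the maximum by one, after which everything else is bookkeeping.
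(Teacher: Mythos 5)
Your proof is correct and follows exactly the route the paper implies by its remark that the corollary follows ``by using a similar argument of Corollary~\ref{coro22}'': decompose each $c$-sequential chain in $\mathcal{R}(j)$ into single steps (each of which raises the maximum by $1$, so the two chains have the same length and matching maxima step by step), apply Claim~\ref{clm27} to each pair of single steps, and sum via the telescoping identity of Claim~\ref{clm21} transported to $\mathcal{R}(j)$ through the identification $R \leftrightarrow R\sqcup[n-j+1,n]$. Your handling of the degenerate cases $j=0$ (which reduces to Corollary~\ref{coro22}) and $F=G$ is a small tidying that does no harm.
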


\begin{proof}[Proof for Lemma \ref{clm28}]
We prove Lemma \ref{clm28} by induction on $j$. It holds for $j=0$ by Lemma \ref{clm3}. Suppose it holds for $j\in[0, k_i-2]$, we are going to prove it holds for $j+1$. Let $F, G, H \in \mathcal{R}_i(j+1)$ with $F\overset{c}{\prec}G\overset{c}{\prec}H$ and $\alpha(F, G)
\geq \beta(F, G)$. We are going to apply induction assumption to show $\alpha(G, H)>\beta(G, H)$.
Let $F'=F\sqcup \{\max F+1\}, G'=G\sqcup \{\max G+1\}$ and $H'=H\sqcup \{\max H+1\}$.
Then $F', G', H'\in \mathcal{R}_i(j)$. Moreover, $F'\overset{c+1}{\prec}G'\overset{c+1}{\prec}H'$ in  $\mathcal{R}_i(j)$.

Let $G_1, G_2, H_1, F_1, F_2$ be sets satisfying $G_1<G'<G_2, H_1<H', F'\overset{c}{\prec}F_1$ and $F'<F_2$. Let $\widetilde{F}=F\sqcup\{n-j\}, \widetilde{G}=G\sqcup\{n-j\}$, $\widetilde{H}=H\sqcup\{n-j\}$. Then $\widetilde{F}, \widetilde{G}, \widetilde{H}\in \mathcal{R}_i(j)$.
We can see that
if $c\geq 2$, then
\begin{equation}\label{eq22}
F'<F_2\overset{1}{\longrightarrow} \widetilde{F}< F_1\overset{c}{\longrightarrow} G_1<G'<G_2\overset{1}{\longrightarrow}  \widetilde{G}\, \,\text{and}\,\,G'\overset{c}{\longrightarrow}H_1<H';
\end{equation}
if $c=1$, then
\begin{equation}\label{eq22+}
F'<F_1=F_2\overset{1}{\longrightarrow} \widetilde{F}=G_1<G'<G_2\overset{1}{\longrightarrow}  \widetilde{G}\, \,\text{and}\,\,G'\overset{c}{\longrightarrow}H_1<H'.
\end{equation}

\begin{clm}\label{00000}
$\alpha(F_1, G_1)>\beta(F_1, G_1).$
\end{clm}

\begin{proof}
Suppose on the contrary that $\alpha(F_1, G_1)\leq\beta(F_1, G_1).$
We first consider the case $c\geq 2$.
By (\ref{eq22}),
\begin{align*}
&\alpha(\widetilde{F}, \widetilde{G})=\alpha(\widetilde{F}, F_1)+\alpha(F_1, G_1)+\alpha(G_1, G')+\alpha(G', \widetilde{G}),\\
&\beta(\widetilde{F}, \widetilde{G})=\beta(\widetilde{F}, F_1)+\beta(F_1, G_1)+\beta(G_1, G')+\beta(G', \widetilde{G}).
\end{align*}
Note that $\alpha(F, G)\geq\beta(F, G)$ means $\alpha(\widetilde{F}, \widetilde{G})\geq\beta(\widetilde{F}, \widetilde{G})$. Since $\alpha(F_1, G_1)\leq\beta(F_1, G_1)$, then
\begin{equation}\label{eq23}
\alpha(\widetilde{F}, F_1)+\alpha(G_1, G')+\alpha(G', \widetilde{G})\geq\beta(\widetilde{F}, F_1)+\beta(G_1, G')+\beta(G', \widetilde{G}).
\end{equation}

Note that $\max F_2=\max G'.$ By Claim \ref{0000}, we have $\beta(F', F_2)=\beta(G_1, G')$ and $\alpha(F', F_2)=\alpha(G_1, G')$. Note that $F_2\overset{1}{\longrightarrow} \widetilde{F}, G'\overset{1}{\longrightarrow} \widetilde{G}$, $\max F_2=\max G'$ and $\max  \widetilde{F}=\max  \widetilde{G} $, it follows from Corollary \ref{000} that
$\alpha(F_2, \widetilde{F})=\alpha(G', \widetilde{G})$ and $\beta(F_2, \widetilde{F})=\beta(G', \widetilde{G}).$  Then
\begin{align*}
\alpha(\widetilde{F}, F_1)+\alpha(G_1, G')+\alpha(G', \widetilde{G})
&=\alpha(\widetilde{F}, F_1)+\alpha(F', F_2)+\alpha(F_2, \widetilde{F})=\alpha(F', F_1).
\end{align*}

Similarly, we have
$$\beta(\widetilde{F}, F_1)+\beta(G_1, G')+\beta(G', \widetilde{G})=\beta(F', F_1).$$

So inequality (\ref{eq23}) gives $\alpha(F', F_1)\geq\beta(F', F_1)$.

Note that $F'\overset{c}{\prec}F_1, F_1\overset{c}{\longrightarrow}G_1 \in\mathcal{R}_i(j), c\in [k_i-j]$, by induction hypothesis, $\alpha(F_1, G_1)>\beta(F_1, G_1)$. A contradiction to our assumption.

We next consider the case $c=1$. The proof is quite similar to the above case, for completeness, we write here.
By (\ref{eq22+}),
\begin{align*}
&\alpha(\widetilde{F}, \widetilde{G})=\alpha(G_1, G')+\alpha(G', \widetilde{G}),\\
&\beta(\widetilde{F}, \widetilde{G})=\beta(G_1, G')+\beta(G', \widetilde{G}).
\end{align*}
Note that $\alpha(F, G)\geq\beta(F, G)$ means $\alpha(\widetilde{F}, \widetilde{G})\geq\beta(\widetilde{F}, \widetilde{G})$. Since $\alpha(F_1, G_1)\leq\beta(F_1, G_1)$, then
\begin{equation}\label{eq23+}
\alpha(G_1, G')+\alpha(G', \widetilde{G})\geq\beta(G_1, G')+\beta(G', \widetilde{G}).
\end{equation}

Note that $\max F_2=\max G'.$ By Claim \ref{0000}, we have $\beta(F', F_2)=\beta(G_1, G')$ and $\alpha(F', F_2)=\alpha(G_1, G')$. Note that $F_2\overset{1}{\longrightarrow} \widetilde{F}, G'\overset{1}{\longrightarrow} \widetilde{G}$, $\max F_2=\max G'$ and $\max  \widetilde{F}=\max  \widetilde{G} $, it follows from Corollary \ref{000} that
$\alpha(F_2, \widetilde{F})=\alpha(G', \widetilde{G})$ and $\beta(F_2, \widetilde{F})=\beta(G', \widetilde{G}).$  Then
\begin{align*}
\alpha(G_1, G')+\alpha(G', \widetilde{G})
&=\alpha(F', F_2)+\alpha(F_2, \widetilde{F})=\alpha(F', \widetilde{F}).
\end{align*}

Similarly, we have
$$\beta(\widetilde{F}, F_1)+\beta(G_1, G')+\beta(G', \widetilde{G})=\beta(F',\widetilde{F}).$$
So inequality (\ref{eq23+}) gives $\alpha(F', \widetilde{F})\geq\beta(F', \widetilde{F})$.
Note that
\[
\alpha(F', F_1)=\alpha(F'_1, \widetilde{F})-\alpha(F_1, G_1)
\]
and
\[
\beta(F'_1, F_1)=\beta(F'_1, \widetilde{F})-\beta(F_1, G_1).
\]
Since $\alpha(F_1, G_1)\leq\beta(F_1, G_1)$ and $\alpha(F', \widetilde{F})\geq\beta(F', \widetilde{F})$, we get $\alpha(F', F_1)\geq\beta(F'_1, F_1)$.
Note that $F'\overset{c}{\prec}F_1, F_1\overset{c}{\longrightarrow}G_1 \in\mathcal{R}_i(j), c\in [k_i-j]$, by induction hypothesis, $\alpha(F_1, G_1)>\beta(F_1, G_1)$. A contradiction to our assumption.
\end{proof}

By (\ref{eq22}), we have $G'\overset{c}{\longrightarrow}H_1, F_1\overset{c}{\longrightarrow} G_1,\max G'=\max F_1, \max H_1=\max G_1$, by Corollary \ref{000} and Claim \ref{00000}, we get
\begin{equation}\label{hh}
\alpha(G', H_1)>\beta(G', H_1).
\end{equation}
Since $G'<G_2$ and $H_1<H'$ in $\mathcal{R}_i(j)$, by Claim \ref{0000}, $\alpha(G', G_2)=\alpha(H_1, H')$ and $\beta(G', G_2)=\beta(H_1, H')$. Then $f_i(G_2)<f_i(H')$ following from (\ref{hh}). Recall that $G_2\overset{1}{\longrightarrow}\widetilde{G}$ and $H'\overset{1}{\longrightarrow}\widetilde{H}$. Hence, $f_i(\widetilde{G})<f_i(\widetilde{H})$ by applying Corollary \ref{000}. This implies $\alpha(G, H)>\beta(G, H)$, as desired. The proof of Lemma \ref{clm28} is complete.
\end{proof}

\subsection{Proofs of Lemma \ref{clm29} and Lemma \ref{clm30} }
We only give the proof of Lemma \ref{clm29}, Lemma \ref{clm30} can be proved by the same argument.
\begin{proof}[Proof of Lemma \ref{clm29}]
Since $f(\{2, 3, \dots, j\})\leq f(\{2, 3, \dots, j-1\})$, we have
\begin{equation}\label{eq32}
\alpha(\{2, 3, \dots, j\}, \{2, 3, \dots, j-1\})\geq \beta(\{2, 3, \dots, j\}, \{2, 3, \dots, j-1\}).
\end{equation}
We need the following claim.
\begin{clm}\label{clm29.1}
\begin{align*}
&\alpha(\{2, 3, \dots, j\}, \{2, 3, \dots, j-1\})=\alpha(\{2, 3, \dots, j-1\}, \{2, 3, \dots, j-2, j\}),\\
&\beta(\{2, 3, \dots, j\}, \{2, 3, \dots, j-1\})=\beta(\{2, 3, \dots, j-1\}, \{2, 3, \dots, j-2, j\}).
\end{align*}
\end{clm}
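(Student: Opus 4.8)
The plan is to move the three index sets occurring in the claim to their padded representatives in $\mathcal{R}$ and then verify both identities by a short computation with Proposition \ref{prop9} and Pascal's rule. Write $k=k_i$. By the convention fixed just before this lemma, the three quantities refer to the $k$-sets
\[
R:=\{2,\dots,j\}\cup[n-k+j,n],\qquad R'=S:=\{2,\dots,j-1\}\cup[n-k+j-1,n],
\]
\[
S':=\{2,\dots,j-2,j\}\cup[n-k+j-1,n],
\]
all of which lie in $\mathcal{R}$, and one checks directly from the definition of $\prec$ that $R\precneqq R'=S\precneqq S'$. I would first record two easy observations: $R,R'$ differ only by the exchange $j\leftrightarrow n-k+j-1$ and $S,S'$ only by the exchange $j-1\leftrightarrow j$ (so $S'=S\,\triangle\,\{j-1,j\}$); and by Remark \ref{coro11} the last element of $R$ (resp.\ $R'$, resp.\ $S'$) that does not continue to $n$ is $j$ (resp.\ $j-1$, resp.\ $j$), so that $\mathcal{L}([n],R,k)=\mathcal{L}([n],\{2,\dots,j\},k)$ and likewise for $R',S'$, with the corresponding reductions for the partners $T_R,T_{R'},T_{S'}$.

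For the $\alpha$–part I would use that the partners of $\{2,\dots,j\}$, $\{2,\dots,j-1\}$ and $\{2,\dots,j-2,j\}$ are $\{1,j\}$, $\{1,j-1\}$ and $\{1,j-1,j\}$ respectively, so Proposition \ref{prop9}(\ref{eq1}) yields
\[
|\mathcal{L}([n],R,k)|=\binom{n-1}{k-1}+\binom{n-j}{k-j+1},\qquad
|\mathcal{L}([n],R',k)|=\binom{n-1}{k-1}+\binom{n-j+1}{k-j+2},
\]
\[
|\mathcal{L}([n],S',k)|=\binom{n-1}{k-1}+\binom{n-j+1}{k-j+2}+\binom{n-j}{k-j+2}.
\]
Then, by (\ref{eq6}) and Pascal's rule $\binom{n-j+1}{k-j+2}=\binom{n-j}{k-j+1}+\binom{n-j}{k-j+2}$,
\[
\alpha(R,R')=\binom{n-j+1}{k-j+2}-\binom{n-j}{k-j+1}=\binom{n-j}{k-j+2}=|\mathcal{L}([n],S',k)|-|\mathcal{L}([n],S,k)|=\alpha(S,S').
\]
(Conceptually, $\alpha(R,R')$ counts the $k$-sets $F$ with $F\cap[j]=\{2,\dots,j-1\}$ and $\alpha(S,S')$ those with $F\cap[j]=\{2,\dots,j-2,j\}$, and $F\mapsto F\,\triangle\,\{j-1,j\}$ is a bijection between the two families; I may use this as a cleaner alternative.)

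For the $\beta$–part, fix $j'\ne i$. By Remark \ref{coro11} and Proposition \ref{prop9}(\ref{eq2}),
\[
|\mathcal{L}([n],T_R,k_{j'})|=|\mathcal{L}([n],\{1,j\},k_{j'})|=\sum_{s=2}^{j}\binom{n-s}{k_{j'}-2},\qquad
|\mathcal{L}([n],T_{R'},k_{j'})|=\sum_{s=2}^{j-1}\binom{n-s}{k_{j'}-2},
\]
\[
|\mathcal{L}([n],T_{S'},k_{j'})|=|\mathcal{L}([n],\{1,j-1,j\},k_{j'})|=\sum_{s=2}^{j-2}\binom{n-s}{k_{j'}-2}+\binom{n-j}{k_{j'}-3}.
\]
Hence, using (\ref{eq7}), $T_S=T_{R'}$ and $\binom{n-j+1}{k_{j'}-2}=\binom{n-j}{k_{j'}-2}+\binom{n-j}{k_{j'}-3}$,
\[
\beta(R,R')=\sum_{j'\ne i}\binom{n-j}{k_{j'}-2}=\sum_{j'\ne i}\Bigl(\binom{n-j+1}{k_{j'}-2}-\binom{n-j}{k_{j'}-3}\Bigr)=\beta(S,S'),
\]
which is the second identity.

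The only steps needing care are the two preliminary observations — that the indicated element is the last one not continuing to $n$ (so that Remark \ref{coro11} applies) and that the partners are $\{1,j\},\{1,j-1\},\{1,j-1,j\}$ — together with the bookkeeping of reading the binomial sums off Proposition \ref{prop9}; the boundary cases $j=3$ (where $\{2,\dots,j-2\}=\emptyset$) and $j=k+1$ (where an appended tail is empty) are covered verbatim by the same formulas. I do not expect a genuine obstacle: once the relevant sizes of $L$-initial families are written down, the claim is just Pascal's rule.
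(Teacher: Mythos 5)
Your proof is correct and follows essentially the same route as the paper: both compute the relevant $L$-initial family sizes via the partner sets (Proposition \ref{prop9}), obtain $\alpha=\binom{n-j}{k_i-j+2}$ and $\beta=\sum_{p\ne i}\binom{n-j}{k_p-2}$ on each side, and close with Pascal's rule. You are somewhat more explicit than the paper about naming the partners $\{1,j\}$, $\{1,j-1\}$, $\{1,j-1,j\}$ and about the padding convention, and you also offer the clean bijection $F\mapsto F\,\triangle\,\{j-1,j\}$ for the $\alpha$-part, but the underlying computation is the same.
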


\begin{proof}[Proof of Claim \ref{clm29.1}]
Note that the sets in $\mathcal{L}([n], \{2, 3, \dots, j-1\}, k_i)\setminus \mathcal{L}([n], \{2, 3, \dots, j\}, k_i)$ are the $k_i$-sets containing $\{2, 3, \dots, j-1\}$ but containing neither $\{1\}$ nor $\{j\}$.
Then we can see that
\begin{align*}
&\alpha(\{2, 3, \dots, j\}, \{2, 3, \dots, j-1\})\\
&=d_i\big(|\mathcal{L}([n], \{2, 3, \dots, j-1\}, k_i)|-|\mathcal{L}([n], \{2, 3, \dots, j\}, k_i)|\big)\\
&=d_i{n-j\choose k_i-j+2}.
\end{align*}
Since the sets in $\mathcal{L}([n], \{2, 3, \dots, j-2, j\}, k_i)\setminus \mathcal{L}([n], \{2, 3, \dots, j-2, j-1\}, k_i)$ are the $k_i$-sets containing $\{2, 3, \dots, j-2, j\}$ but containing neither $\{1\}$ nor $\{j-1\}$,
we also get
$$
\alpha(\{2, 3, \dots, j-1\}, \{2, 3, \dots, j-2, j\})=d_i{n-j\choose k_i-j+2}.
$$
So we have
$$\alpha(\{2, 3, \dots, j\}, \{2, 3, \dots, j-1\})=\alpha(\{2, 3, \dots, j-1\}, \{2, 3, \dots, j-2, j\}),$$
\begin{align*}
\beta(\{2, 3, \dots, j\}, \{2, 3, \dots, j-1\})
&=\sum_{p\ne i}d_j\left [ {n-2\choose k_p-2}+\cdots+{n-j\choose k_p-2}-{n-2\choose k_p-2}- \right.\\
&\quad \left.\cdots-{n-(j-1)\choose k_p-2} \right]\\
&=\sum_{p\ne i}d_j{n-j\choose k_p-2},
\end{align*}
and
\begin{align*}
\beta(\{2, 3, \dots, j-1\}, \{2, 3, \dots, j-2, j\})
&=\sum_{p\ne i}d_j\left [ {n-2\choose k_p-2}+\cdots+{n-(j-1)\choose k_p-2} \right.\\
&\quad \left.-{n-2\choose k_p-2}-\cdots-{n-(j-2)\choose k_p-2}-{n-j\choose k_p-3} \right]\\
&=\sum_{p\ne i}d_j{n-j\choose k_p-2}.
\end{align*}
Thus, we get
$$\beta(\{2, 3, \dots, j\}, \{2, 3, \dots, j-1\})=\beta(\{2, 3, \dots, j-1\}, \{2, 3, \dots, j-2, j\}).$$
This completes the proof Claim \ref{clm29.1}.
\end{proof}
By (\ref{eq32}) and Claim \ref{clm29.1}, we have
$$\alpha(\{2, 3, \dots, j-1\}, \{2, 3, \dots, j-2, j\})\geq\beta(\{2, 3, \dots, j-1\}, \{2, 3, \dots, j-2, j\}).$$
Note that
$$\{2, 3, \dots, j-2,  j-1\} \overset{1}{\prec} \{2, 3, \dots, j-2, j\}\overset{1}{\prec} \dots  \overset{1}{\prec} \{2, 3, \dots, j-2, n-k_i+j-2\}$$
in $\mathbb{R}_i(k-j+2)$.
By Lemma \ref{clm28}, we have
\begin{align*}
&\alpha(\{2, 3, \dots, j-1\}, \{2, 3, \dots, j-2, n-k_i+j-2\})\\
&>\beta(\{2, 3, \dots, j-1\}, \{2, 3, \dots, j-2, n-k_i+j-2\}),
\end{align*}
that is,
$$\alpha(\{2, 3, \dots, j-1\}, \{2, 3, \dots, j-2\})>\beta(\{2, 3, \dots, j-1\}, \{2, 3, \dots, j-2),$$
or equivalently,
$$f_i(\{2, 3, \dots, j-1\})<f_i(\{2, 3, \dots, j-2\}),$$
as desired.
\end{proof}


\section{Acknowledgements}
 This research is supported by  National natural science foundation of China (Grant No. 11931002 and 12371327).

\frenchspacing


\begin{thebibliography}{99}

\bibitem{B2014}
P. Borg,
The maximum product of sizes of cross-t-intersecting uniform families,
 Australas. J. Combin. 60 (2014) 69--78.

\bibitem{B2015}
P. Borg,  A cross-intersection theorem for subsets of a set,
Bull. Lond. Math. Soc. 47 (2015) 248--256.

\bibitem{B2017}
P. Borg, The maximum product of sizes of cross-intersecting families, Discrete Math. 340 (2017) 2307--2317.


\bibitem{BF}
P. Borg, C. Feghali, The maximum sum of sizes of cross-intersecting families of subsets of a set, Discrete Math. 345 (2022) 112981.

\bibitem{Liu}
S. Cambie, J. Kim, H. Liu, T. Tran, A proof of Frankl’s conjecture on cross-union families, 9 pages, 2022
\url{https://arxiv.org/pdf/2202.10365v1.pdf}.


\bibitem{EKR1961}
P. Erd\H{o}s, C. Ko, R. Rado, Intersection theorems for systems of finite sets,
Quart. J. Math. Oxf. 2(12) (1961) 313--320.

\bibitem{conj.F}
P. Frankl. On the arithmetic mean of the size of cross-union families, Acta Math. Hungar.,
164(1):312–325, 2021.




\bibitem{KK3}
P. Frankl, A. Kupavskii, Sharp results concerning disjoint cross-intersecting
families, Europ J. Combin 86 (2020) 103089.

\bibitem{FK2017}
P. Frankl, A. Kupavskii, A size-sensitive inequality for cross-intersecting families, European J. Combin. 62 (2017) 263--271.

\bibitem{FK2018}
P. Frankl, A. Kupavskii, Erd\H os-Ko-Rado theorem for \{0, ±1\}-vectors, J. Comb. Theory Ser. A 155 (2018), 157--179.


\bibitem{FT}
P. Frankl, N. Tokushige, Some best possible inequalities concerning crossing-intersecting families, J. Combin. Theory Ser. A 61 (1992) 87-97.

\bibitem{FW1}
P. Frankl, Jian Wang, A product version of the Hilton-Milner theorem, 24 pages, 2022,
\url{https://arxiv.org/pdf/2206.07218.pdf}

\bibitem{FW2}
P. Frankl, Jian Wang, A product version of the Hilton-Milner-Frankl theorem, 22 pages, 2022,
\url{https://arxiv.org/pdf/2206.07217.pdf}

\bibitem{FG}
Z. F\" uredi, Griggs, Families of finite sets with minimum shadows, Combinatorica, 6(4)
(1986) 355-363





\bibitem{HM1967}
A.J.W. Hilton, E.C. Milner, Some intersection theorems for systems of finite
sets, Quart. J. Math. Oxf. 2 (18) (1967) 369-384.

\bibitem{KK4}
A.J.W. Hilton, The Erd\H os-Ko-Rado theorem with valency conditions, Unpublished
Manuscript, 1976.

\bibitem{H}
A.J.W. Hilton, An intersection theorem for a collection of families of subsets of a finite set, J. London Math. Soc. 2 (1977) 369-376.

\bibitem{huangpeng} Y. Huang, Y. Peng, The maximum sum of sizes of non-empty pairwise cross intersecting families. arXiv: 2306.03473.
Y

\bibitem{HP2023-1}
Y. Huang, Y. Peng, Mixed pairwise cross intersecting families, manuscript.



\bibitem{KK1}
G.O.H. Katona, A theorem of finite sets, in: Theory of Graphs, Proc. Colloq.
Tihany, Akad\'emai Kiad\'o, (1968) 187--207.





\bibitem{KK2}
J.B. Kruskal, The number of simplices in a complex, in: Math. Opt. Techniques,
Univ. of Calif. Press, (1963) 251-278.

\bibitem{M}
M. M\"ors, A generalization of a theorem of Kruskal, Graphs and Combinatorica, 1(1985) 167-183.



\bibitem{MT}
 M. Matsumoto, N. Tokushige, The exact bound in the Erd\H os-Ko-Rado theorem for cross-intersecting families, J. Combin. Theory Ser. A 52 (1989), 90--97.


\bibitem{P1986}
L. Pyber, A new generalization of the Erd\H os-Ko-Rado,  J. Combin. Theory Ser. A 43 (1986) 85--90.

\bibitem{SFQ2020}
C. Shi, P. Frankl, J. Qian,
On non-empty cross-intersecting families, Combinatorica 42 (2022) 1513--1525


\bibitem{WZ}
 J. Wang, H. Zhang, Nontrivial independent sets of bipartite graphs and cross-intersecting families, J. Combin. Theory, Ser. A 120 (2013) 129--141.

\bibitem{ZF2023}
M. Zhang, T. Feng, A note on non-empty cross-intersecting families. ArXiv:2306.04330v1





\end{thebibliography}
\end{document}